\setlist{nosep}
\definecolor{darkgreen}{rgb}{0,0.5,0}
\newcommand{\ZZ}{\mathbb Z}
\newcommand{\QQ}{\mathbb{Q}}
\newcommand{\Qbar}{\overline{\mathbb{Q}}}
\newcommand{\PP}{\mathbb P}
\newcommand{\FF}{\mathbb F}
\newcommand{\CC}{\mathbb C}
\newcommand{\Sbar}{\overline{S}}
\newcommand{\cO}{\mathcal{O}}
\DeclareMathOperator{\Pic}{Pic}
\DeclareMathOperator{\NS}{NS}
\DeclareMathOperator{\Aut}{Aut}
\DeclareMathOperator{\im}{im}
\DeclareMathOperator{\disc}{disc}
\DeclareMathOperator{\rk}{rk}
\DeclareMathOperator{\Br}{Br}
\DeclareMathOperator{\Gal}{Gal}
\DeclareMathOperator{\rank}{rank}
\DeclareMathOperator{\Hom}{Hom}
\DeclareMathOperator{\Triv}{Triv}
\DeclareMathOperator{\MW}{MW}
\DeclareMathOperator{\tors}{tors}
\DeclareMathOperator{\Sym}{Sym}
\DeclareMathOperator{\Spec}{Spec}
\DeclareMathOperator{\Kum}{Kum}
\DeclareMathOperator{\Res}{Res}
\DeclareMathOperator{\Frob}{Frob}
\DeclarePairedDelimiter\abs{\lvert}{\rvert}
\newcommand*{\defeq}{\mathrel{\rlap{%
                     \raisebox{0.3ex}{$\cdot$}}%
                     \raisebox{-0.3ex}{$\cdot$}}%
                     =}
\theoremstyle{plain}
\newtheorem{theorem}{Theorem}[section]
\newtheorem{corollary}[theorem]{Corollary}
\newtheorem{proposition}[theorem]{Proposition}
\newtheorem{lemma}[theorem]{Lemma}
\theoremstyle{definition}
\newtheorem{definition}[theorem]{Definition}
\newtheorem{example}[theorem]{Example}
\newtheorem{remark}[theorem]{Remark}
\title{\boldmath Arithmetic and geometry of a K3 surface emerging from virtual corrections to Drell--Yan scattering}
\author[a,b]{Marco Besier,}
\author[a]{Dino Festi,}
\author[c,d]{Michael Harrison,}
\author[e]{Bartosz Naskr\k{e}cki}
\affiliation[a]{Institut f\"ur Mathematik, Johannes Gutenberg-Universit\"at Mainz, 55099 Mainz, Germany} 
\affiliation[b]{PRISMA Cluster of Excellence, Institut f\"ur Physik, Johannes Gutenberg-Universit\"at Mainz, 55099 Mainz, Germany}
\affiliation[c]{School of Mathematical Sciences, University of Nottingham, NG7 2RD Nottingham, United Kingdom}
\affiliation[d]{MAGMA Computational Algebra Group, School of Mathematics and Statistics, University of Sydney, NSW 2006, Australia}
\affiliation[e]{Faculty of Mathematics and Computer Science, Adam Mickiewicz University, 61-614 Pozna\'{n}, Poland}
\emailAdd{mbesie01@uni-mainz.de}
\emailAdd{dfesti@uni-mainz.de}
\emailAdd{mch1728@gmail.com}
\emailAdd{bartnas@amu.edu.pl}
\abstract{We study a K3 surface, which appears in the two-loop mixed electroweak-quantum chromodynamic virtual corrections to Drell--Yan scattering. 
A detailed analysis of the geometric Picard lattice is presented,
computing its rank and discriminant in two independent ways:
first using explicit divisors on the surface and then using an explicit elliptic fibration.
We also study in detail the elliptic fibrations of the surface and use them to provide an explicit Shioda--Inose structure. 
}
\begin{document} 

\begin{acronym}

\acro{sm}[SM]{Standard Model}

\acro{qcd}[QCD]{quantum chromodynamics}

\acro{ew}[EW]{electroweak}

\acro{qed}[QED]{quantum electrodynamics}

\acro{qft}[QFT]{quantum field theory}

\acro{lhc}[LHC]{Large Hadron Collider}

\acro{gut}[GUT]{Grand Unified Theory}

\acro{mpls}[MPLs]{multiple polylogarithms}

\acro{empls}[eMPLs]{elliptic multiple polylogarithms}

\end{acronym}

\maketitle
\flushbottom
\newpage
\section{Introduction} \label{sec:Introduction}

Given the advancing precision of measurements carried out at modern particle colliders, equally precise theoretical predictions are required.
To perform these computations, one has to solve the most complicated Feynman integrals.
It turns out that the rationality problem for hypersurfaces often marks an essential step in the calculation of these integrals \cite{Besier:2018jen,Chaubey:2019lum,Bourjaily:2018aeq,vonManteuffel:2017hms,Gehrmann:2015bfy,Heller:2019gkq}.
As a consequence, methods from algebraic and arithmetic geometry are becoming increasingly important for theoretical particle physics.

In this paper, we study the rationality problem for a hypersurface derived from Feynman integrals contributing to the mixed electroweak-quantum chromodynamics corrections to Drell--Yan scattering.
The preferred method of solving these Feynman integrals is to solve them in terms of \ac{mpls}, as these functions are well understood and implemented for numerical evaluation \cite{Vollinga:2004sn,Bauer:2000cp}.
To achieve this, one would ideally want to find a rational parametrisation of the projective surface given by 
\begin{equation}\label{eq:DY}
   X_{DY}\colon w^2=4xy^2z(x-z)^2+(x+y)^2(xy+z^2)^2
\end{equation}
in the weighted projective space $\PP(1,1,1,3)$ over $\QQ$ with coordinates $x,y,z,w$ of weights $1,1,1,3$, respectively.
\begin{remark}
For the reader who never encountered weighted projective spaces before,
these can be regarded as a generalisation of projective spaces by arbitrarily changing the \textit{weight} of the coordinates of the space and hence changing the condition for a polynomial to be homogeneous.
For example, in a space in which the coordinates $x_0$ and $x_1$ have weight $1$ and $2$, respectively, the polynomial $x_0^2-x_1$ is homogeneous of degree $2$.
A classical reference for this topic is~\cite{Dol82}.
\end{remark}
\noindent As a first result, we prove the following theorem.
\begin{theorem}\label{t:Main1}
The surface $X_{DY}$ defined in \eqref{eq:DY} is birationally equivalent to a K3 surface. 
Its Picard lattice has rank $19$, discriminant $24$ and discriminant group isomorphic to $\ZZ/2\ZZ \times \ZZ/2\ZZ\times \ZZ/6\ZZ$. The surface $X_{DY}$ admits an explicit Shioda-Inose structure which is related to a classical modular form of level $160$ and weight $2$.
\end{theorem}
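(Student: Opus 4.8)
The plan is to realise $X_{DY}$ as (a resolution of) a K3 surface $X$, compute the N\'eron--Severi lattice of $X$ in two independent ways --- via explicit divisors and via an elliptic fibration --- and then produce the Shioda--Inose partner explicitly. First, a general sextic hypersurface in $\PP(1,1,1,3)$ is a K3 surface, namely a double cover of $\PP^2_{x,y,z}$ branched along a plane sextic, so it suffices to check that $X_{DY}$ has at worst rational double points; I would verify this by passing to affine charts, computing the singular locus of the defining equation (and of the associated double cover), and classifying each singularity as ADE, recording the Dynkin type of each exceptional configuration of the minimal resolution $\pi\colon X\to X_{DY}$, which is then a smooth K3 surface.

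For the Picard lattice I would first give a lower bound by writing down explicit divisor classes on $X$: the components of the exceptional loci of $\pi$, the pullback of a general line in $\PP^2$, and the components of the preimages of the distinguished lines and conics along which the branch sextic restricts to a perfect square (so that the double cover splits there); computing their intersection matrix exhibits a sublattice $L\subseteq\NS(X)$ of rank $19$ and discriminant $24$. Independently I would construct an elliptic fibration $X\to\PP^1$ from a well-chosen pencil of curves in $\PP^2_{x,y,z}$, bring it to Weierstrass form, read off all reducible fibres from the discriminant via Tate's algorithm together with a set of sections, and apply Shioda--Tate: $\NS(X)=\Triv(X)+\MW(X)$ with $\Triv(X)=U\oplus\bigoplus_v T_v$, so $\rho(X)=2+\sum_v(m_v-1)+\rank\MW(X)=19$, while the Shioda formula $|\disc\NS(X)|=\bigl(\prod_v|\disc T_v|\bigr)\,|\disc\MW(X)|/|\MW(X)_{\tors}|^2$ and the discriminant forms of the fibre lattices and of the Mordell--Weil contributions recover the discriminant $24$ and the discriminant group $\ZZ/2\ZZ\times\ZZ/2\ZZ\times\ZZ/6\ZZ$.

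To complete the computation of the rank I must exclude $\rho(X)=20$, for which I would use the van~Luijk--Elsenhans--Jahnel method: for suitable primes $p$ of good reduction, count points over $\FF_p,\FF_{p^2},\dots$ to determine the characteristic polynomial of $\Frob_p$ on $\ell$-adic $H^2$, invoke the Tate conjecture for K3 surfaces over finite fields to read off the geometric Picard rank of the special fibre, and use the Artin--Tate formula to extract the square class of its discriminant; two primes of good reduction at which the geometric Picard rank is $20$ but the discriminant square classes differ then force $\rho(X)\le19$, hence $\rho(X)=19$. Then $L$ has finite index in $\NS(X)$, and as $24$ has no square divisor other than $1$ and $4$ this index is $1$ or $2$, the latter being excluded by matching discriminant forms against the mod-$p$ data or, more transparently, because in the elliptic model $\NS(X)$ is generated by the trivial lattice and the sections, so no saturation is lost --- confirming $\disc\NS(X)=24$ and the discriminant group $\ZZ/2\ZZ\times\ZZ/2\ZZ\times\ZZ/6\ZZ$.

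Finally, since $\rho(X)=19$, $X$ admits a Shioda--Inose structure by Morrison's theorem; to make it explicit I would exhibit an elliptic fibration on $X$ of Inose type --- for instance one with two fibres of Kodaira type $\mathrm{II}^{*}$ --- from which one reads off the Nikulin involution $\iota$, verifies that the minimal resolution of $X/\iota$ is a Kummer surface $\Kum(B)$, and identifies the abelian surface $B$. As $\rho(X)=19$ forces $\rho(B)=3$, the elliptic curves in $B$, whose $j$-invariants are the roots of an explicit quadratic, are isogenous and non-CM, and matching Euler factors identifies their isogeny class with the one attached to the weight-$2$ newform $f$ of level $160$; the Shioda--Inose Hodge isometry between the transcendental lattices of $X$ and $\Kum(B)$ then exhibits the transcendental motive of $X$ as (a twist of) $\Sym^2 f$, which is the modular form in the statement. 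The main obstacle throughout is step three: pinning the geometric Picard number to exactly $19$ and proving the explicit rank-$19$ lattice is already all of $\NS(X)$ requires a judicious choice of primes, enough point counts to determine $\Frob_p$, and the Artin--Tate bookkeeping of square classes, which is precisely why having the elliptic-fibration description (where Shioda--Tate makes primitivity automatic) and the Shioda--Inose structure (where $\rho$ is read off from an abelian surface) as independent confirmations is so useful.
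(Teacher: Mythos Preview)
Your plan is essentially the paper's own: K3-ness via the double cover of $\PP^2$ branched along a sextic with ADE singularities, the upper bound $\rho\le 19$ by van~Luijk--Kloosterman at two primes with incompatible discriminant square classes, the lower bound by nineteen explicit $(-2)$-curves (exceptional divisors plus split preimages of lines and conics), an independent confirmation via Shioda--Tate on an explicit elliptic fibration, and the Shioda--Inose structure read off from a fibration with two $II^*$ fibres whose Kummer partner is $\Kum(E_1\times E_2)$ with $E_1,E_2$ isogenous non-CM curves matched to the level-$160$ weight-$2$ newform.

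Two small corrections. First, your suggestion to exclude index $2$ by ``matching discriminant forms against the mod-$p$ data'' does not work as stated: the van~Luijk argument only compares the two \emph{rank-$20$} reductions against each other, and gives no direct control on the discriminant of the characteristic-zero rank-$19$ lattice. The paper instead excludes index $2$ by a Galois argument---the two nontrivial candidates in $\Lambda/2\Lambda$ for the kernel of $\Lambda/2\Lambda\to\Pic\overline{S}/2\Pic\overline{S}$ are swapped by $\Gal(\QQ(\sqrt{5})/\QQ)$, so neither can lie in the kernel alone---while your second alternative (primitivity via the elliptic model, where $\NS$ is generated by fibre components and sections) is exactly the paper's independent second proof. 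Second, the $j$-invariants of $E_1,E_2$ are roots of an explicit \emph{quartic} over $\QQ$, not a quadratic: the curves are defined over $\QQ(\sqrt{2},\sqrt{5})$, and the link to level $160$ goes through a Hilbert modular form over that biquadratic field which is then shown to be a base change from $\QQ$.
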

\noindent The first two statements of the theorem are proven in Section~\ref{sec:ComputingPicXViaDivisorsOnX}, cf. Proposition~\ref{p:PicSub} and Theorem~\ref{t:LambdaPic};
the third statement is proven in Section~\ref{sec:ComputationOfTheShiodaInoseStructureOfX}, cf. Corollary~\ref{c:Modular} and Theorem~\ref{t:SIStructure}.

\noindent Let us recall the definition of a K3 surface and its Picard lattice and Picard number.
\begin{definition}\label{def:K3}
Let $Y$ be a smooth, projective, geometrically integral surface over a field $k$. 
We say that $Y$ is a K3 surface if it has first cohomology group $H^1(Y,\cO_Y)=0$ and  trivial canonical class $K_Y=0$. 
\end{definition}
\begin{remark}
Alternatively, Definition~\ref{def:K3} is equivalent to saying that a K3
surface is a simply connected Calabi--Yau manifold of dimension $2$,
i.e., a smooth, simply connected surface admitting a nowhere vanishing holomorphic $2$-form.
\end{remark}
\begin{definition}
Let $Y$ be a K3 surface over a field $k$ and let $\overline{k}$ be an algebraic closure of $k$.
With $\overline{Y}$ we denote the change of base $Y\times_{k} \overline{k}$ of $Y$ to $\overline{k}$.
We denote by $\Pic Y$ the {\em Picard lattice} of $Y$ (see~\cite[Chapter 17]{Huy16} for more details);
 $\Pic \overline{Y}$ denotes the {\em geometric} Picard lattice of $Y$, that is, the Picard lattice of $\overline{Y}$.
The {\em Picard number} of $Y$, denoted by $\rho (Y)$, is defined to be the rank of the Picard lattice of $Y$, 
i.e., $\rho (Y)= \rk \Pic Y$;
analogously, the {\em geometric} Picard number of $Y$ is $\rho (\overline{Y})$,
the Picard number of $\overline{Y}$.
\end{definition}

As noted in~\cite{FvS19},
also the two-loop virtual corrections to Bhabha scattering give rise to a K3 surface,
and so one might ask if the surfaces arising from the Bhabha and the Drell--Yan scatterings are related, or even the same.
We prove that this is not the case, by showing that the Picard lattices of the two surfaces have different rank.
\begin{theorem}\label{t:Main2}
Let $X_{DY}$ be the surface defined by \eqref{eq:DY}, 
and let $B$ be the surface defined in~\cite{FvS19}.
Then $X_{DY}$ is neither birationally equivalent nor isogenous to $B$; 
furthermore, it is not birationally equivalent to any of the deformations of $B$ considered in~\cite{FvS19}.
\end{theorem}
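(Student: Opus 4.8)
The plan is to exploit the invariance of the Picard lattice under birational equivalence and isogeny for K3 surfaces. By Theorem~\ref{t:Main1} (whose first two assertions we assume), the geometric Picard lattice of $X_{DY}$ has rank $19$ and discriminant $24$, with discriminant group $\ZZ/2\ZZ\times\ZZ/2\ZZ\times\ZZ/6\ZZ$. The first step is to recall from \cite{FvS19} the corresponding invariants of the surface $B$ and of the deformations of $B$ considered there: in particular one needs the geometric Picard number of $B$ (and of each listed deformation). Assuming, as stated in \cite{FvS19}, that $\rho(\overline B)\neq 19$ --- or more precisely that none of the relevant surfaces shares the triple (rank, discriminant, discriminant group) $=(19,24,\ZZ/2\times\ZZ/2\times\ZZ/6)$ with $\overline{X_{DY}}$ --- the argument proceeds purely by comparison of these numerical invariants.

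The key steps, in order, are: (i) recall that for K3 surfaces birational equivalence implies isomorphism (by a standard result, since a K3 surface is a minimal model and has no $(-1)$-curves), hence in particular it preserves the geometric Picard lattice up to isometry; (ii) recall that an isogeny of K3 surfaces (a Hodge isometry of the transcendental lattices over $\QQ$, equivalently a rational-equivalence correspondence) preserves the rank of the transcendental lattice, and hence the geometric Picard number $\rho=22-\rk T$; (iii) read off $\rho(\overline{X_{DY}})=19$ from Theorem~\ref{t:Main1} and compare with $\rho(\overline B)$ from \cite{FvS19}; since these differ, $X_{DY}$ can be neither birationally equivalent nor isogenous to $B$; (iv) run through the finite list of deformations of $B$ in \cite{FvS19}, noting that each has geometric Picard number different from $19$ (the generic deformation has strictly smaller Picard number), so none is birationally equivalent to $X_{DY}$. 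If for some deformation the Picard number happens to coincide with $19$, one falls back on the finer invariant: the discriminant and discriminant group of $\Pic\overline{X_{DY}}$ computed in Theorem~\ref{t:Main1} must be matched against the corresponding data in \cite{FvS19}, and a discrepancy again rules out birational equivalence.

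The main obstacle I anticipate is making step (iv) airtight: one must be careful about exactly which ``deformations of $B$'' are meant in \cite{FvS19} and whether their Picard lattices are fully pinned down there, or only bounded. If \cite{FvS19} only establishes inequalities $\rho\le 19$ or computes the Picard number generically (leaving special members of the family unaddressed), then a clean lattice-theoretic separation may require either (a) invoking the fact that a very general member of a positive-dimensional family of K3 surfaces has Picard number strictly below that of any fixed special member lying in the family's closure, together with the observation that $X_{DY}$ does not specialize into that family, or (b) a direct transcendental comparison: the transcendental lattice $T(\overline{X_{DY}})$ has rank $3$ and a Hodge structure governed by the weight-$2$ level-$160$ modular form of Theorem~\ref{t:Main1}, and one checks that the (rank-$3$ or higher) transcendental Hodge structures attached to $B$ and its deformations are not isomorphic --- e.g. because the associated modular form, or the discriminant form of $T$, differs. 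I would aim for the cleanest version: quote the Picard number of $\overline B$ from \cite{FvS19}, observe it is not $19$, and remark that every deformation considered there has Picard number at most that of $B$ (hence $\neq 19$), so the numerical comparison suffices and no finer argument is needed.
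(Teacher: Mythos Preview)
Your approach is essentially the paper's: reduce to the smooth K3 model, use that birational K3 surfaces are isomorphic, that isogenous K3 surfaces in characteristic $0$ have equal geometric Picard number, and then compare lattice invariants with those of $B$ and its deformations from~\cite{FvS19}. Concretely, $\rho(\overline{B})=20$, so your steps (i)--(iii) go through exactly as in the paper.

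One slip to flag: in your final paragraph you write that every deformation ``has Picard number at most that of $B$ (hence $\neq 19$)''. Since $\rho(\overline{B})=20$, the bound $\le 20$ does \emph{not} exclude $19$, so this shortcut fails. The paper handles the deformations precisely via the fallback you already identified: it asserts that the geometric Picard \emph{lattice} of the generic deformation of $B$ in~\cite{FvS19} is not isometric to $\Pic\overline{S_{DY}}$ (rank $19$, discriminant $24$, discriminant group $(\ZZ/2)^2\times\ZZ/6$). So your plan is right; just drop the erroneous ``hence $\neq 19$'' and go straight to the lattice comparison for the deformation family.
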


Although the surface $X_{DY}$ is not parametrisable by rational functions and it is not isomorphic to the surface arising from Bhabha scattering, something can still be done to solve the integrals:
for example, one can leave the non-rationalisable square root untouched and express the result in terms of \ac{mpls} with algebraic arguments \cite{Heller:2019gkq}.

Alternatively, one may hope to solve the integrals in terms of \ac{empls}. 
This approach has recently led to a very compact result for the master integrals of the two-loop Bhabha corrections \cite{tancredi:2019amp}, and involved elliptic fibrations of the Bhabha K3 surface.
Therefore, we believe that elliptic fibrations on $X_{DY}$ might enable physicists to find a compact result of the Drell--Yan master integrals in terms of \ac{empls}.
For this reason, we present a computational method to find many elliptic fibrations of $X_{DY}$ in Section~\ref{sec:FullClassificationOfEllipticFibrationsOnX}, and explicitly describe three of them.

In order to prove Theorems~\ref{t:Main1} and~\ref{t:Main2}, 
it is enough to consider a smooth model $S_{DY}$ (cf. Definition~\ref{d:SDY}) of $X_{DY}$ and study its (geometric) Picard lattice.
Finding elliptic fibrations on $X_{DY}$ is equivalent to finding elliptic fibrations on $S_{DY}$. 
The methods used are not new,
but this paper represents an attempt to establish an algorithmic and concrete approach to these problems.

We proceed as follows: 
a physical motivation for our results and the proof of Theorem~\ref{t:Main2} (cf. Corollary~\ref{c:NonIso}) are given in Section~\ref{sec:PhysicalBackgroundAndMotivation}.
In Section~\ref{sec:ComputingPicXViaDivisorsOnX} and~\ref{sec:ComputingPicXViaEllipticFibrationsOnX} we compute the Picard lattice of $S_{DY}$ in two different ways: 
exhibiting explicit divisors, and using an elliptic fibration, respectively.
Furthermore, we use the computations in Section~\ref{sec:ComputingPicXViaDivisorsOnX} to deduce some information about the Brauer group of $S_{DY}$ (Subsection~\ref{ssec:ComputationOfTheBrauerGroupOfX}).
The computation of elliptic fibrations of $S_{DY}$ is provided in Section~\ref{sec:FullClassificationOfEllipticFibrationsOnX}.
Besides being useful for (re-)computing the geometric Picard lattice of the K3 surface, 
these elliptic fibrations allow us to explicitly describe a Shioda--Inose structure of $S_{DY}$, 
which is done in Section~\ref{sec:ComputationOfTheShiodaInoseStructureOfX}. 
Consequently, we also compute the number of points on the reduction of the surface $S_{DY}$ to positive characteristic.

Some proofs in this paper are aided by explicit computations using the software package {\tt Magma} (cf.~\cite{Magma}). 
This is explicitly stated in the proofs where such computations are performed.
The code used in the proofs can be found in the ancillary file~\cite{BFHN19} available online.
\section{Physical background and motivation} 
\label{sec:PhysicalBackgroundAndMotivation}
\subsection{Particle physics and Drell--Yan scattering}
In physics, all possible interactions of matter can be reduced to four fundamental forces.
On the one hand, one has gravitational and electromagnetic interactions, whose effects we experience in our everyday life.
On the other hand, one has the strong and the weak interactions that produce forces at subatomic distances.
While the gravitational force is successfully described by Einstein's general theory of relativity, the strong, weak, and electromagnetic interactions are described by the \ac{sm} of particle physics---a term which has become a synonym for a \ac{qft} based on the gauge group $SU(3)\times SU(2)\times U(1)$.
The groups $SU(3), SU(2)$, and $U(1)$ constitute the gauge groups for the strong, weak, and electromagnetic force, respectively.
Accordingly, the \ac{sm} contains three coupling constants $g_1, g_2$ and $g_3$---one for each of the three fundamental interactions described by the \ac{sm}.
The respective \ac{qft}s that are used for the theoretical description of these interactions are \ac{qcd} and \ac{ew} theory, the latter being the unification of weak theory and \ac{qed}.

To test the validity of the \ac{sm}, experimental physicists investigate scattering processes, i.e., collisions of particles generated by electron or proton beams.
In the search for new elementary particles, these collisions are performed at very high energies in huge particle colliders, the world's most famous being the \ac{lhc} at the CERN laboratory in Geneva, Switzerland.

In the regime of high energies, the aforementioned coupling constants $g_1, g_2, g_3$ of the \ac{sm} are very small and perturbation theory, i.e., regarding physical observables as power series in the coupling constants, turns out to be a valuable tool to obtain theoretical predictions.
For this reason, perturbative \ac{qft} is often referred to as theoretical high energy particle physics. 

One of the most critical scattering processes studied at the \ac{lhc} is the Drell--Yan production of $Z$ and $W$ bosons \cite{Drell:1970wh}.
Due to their clean experimental signature, Drell--Yan processes can be measured with comparatively small experimental uncertainty, allowing for very precise tests of the \ac{sm} and numerous applications in other scattering experiments.
For instance, the Drell--Yan mechanism provides valuable information about the parton distribution functions, which are essential for theoretical studies of processes at virtually any hadron collider around the globe.
Because of the sharp experimental signal, Drell--Yan scattering is also used for detector calibration of the \ac{lhc} itself and for the determination of its collider luminosity.
Finally, Drell--Yan processes are crucial in searches for physics beyond the \ac{sm} involving new, yet to discover elementary particles such as $Z^\prime$ and $W^\prime$ that originate from \ac{gut} extensions of the \ac{sm}.
For all these reasons, an accurate and reliable experimental setup as well as very precise theoretical descriptions of the Drell--Yan mechanism are of vital importance for contemporary particle physics at the \ac{lhc}.

Latest theoretical predictions for this scattering process are in reasonable agreement with the experimental data.
Nevertheless, even more precise computations are indispensable.
To improve theoretical accuracy, one needs to take into account higher-order perturbative corrections.
Currently, the theoretical description of Drell--Yan processes includes \ac{qcd} corrections of second order \cite{Altarelli:1979ub,Altarelli:1984pt,Matsuura:1988sm,Hamberg:1990np} as well as \ac{ew} corrections up to first order of the respective perturbation series \cite{Wackeroth:1996hz,Baur:1997wa}.
Second-order corrections to the Drell--Yan process in \ac{qed} with massive fermions were recently considered in \cite{Blumlein:2019srk}.
Moreover, there are some other significant second-order perturbative contributions, whose full analytic structure was also studied only recently, one of the most difficult being the mixed \ac{ew}-\ac{qcd} corrections \cite{Bonciani:2016ypc,Heller:2019gkq,vonManteuffel:2017myy}. 
It is of maximum importance to get a solid understanding of these newly discovered contributions to match future experimental requirements, especially in view of run III of the \ac{lhc}, starting in 2021.

\subsection{Feynman integrals via differential equations}
The crux of a typical computation in theoretical particle physics is the fact that, in order to determine the sought after coefficients of the relevant perturbation series, one has to solve certain integrals, often referred to as Feynman integrals.
For this reason, these integrals may be regarded as the building blocks for the study of any scattering process in perturbative \ac{qft}.
Unfortunately, Feynman integrals are usually extremely difficult to compute and often even divergent under the assumption of a four-dimensional space-time.
In order to deal with these divergences, one needs to introduce a regularisation parameter. 
While there are several ways to do this, the method of dimensional regularisation has become standard. 
Roughly speaking, one replaces a four-dimensional integral by an integral in $D$ dimensions, where $D$ depends on a small regularisation parameter $\epsilon>0$. 
In practice, one usually assumes $D=4-2\epsilon$ such that the ``physical limit'' is recovered when putting $\epsilon\rightarrow0$.

Despite the extreme complexity of Feynman integral calculations, the last decades have witnessed an impressive advancement in the identification of mathematical tools that can be put into action to perform these complicated computations.
One method that has proven itself to be spectacularly successful is the utilisation of differential equations satisfied by the Feynman integrals:
solving a system of differential equations for a given set of Feynman integrals, one can obtain the final result while circumventing the need to perform the original integrations \cite{Kotikov:1990kg,Bern:1993kr,Remiddi:1997ny,Gehrmann:1999as}.
These days, solving Feynman integrals via differential equations has become one of the standard ways to compute higher-order corrections for scattering processes.

Let us see how this method works in practice through a simple example.
Therefore, consider the following two Feynman integrals that are needed for a certain first-order correction in \ac{qed}:
\begin{align}
\begin{split}
    I_1&=\left(m^2\right)^{2-\frac{D}{2}}\int \frac{d^Dk}{i\pi^{\frac{D}{2}}}\frac{1}{\left[m^2-k^2\right]^2}\; ,\\
    I_2&=\left(m^2\right)^{3-\frac{D}{2}}\int \frac{d^Dk}{i\pi^{\frac{D}{2}}}\frac{1}{\left[m^2-k^2\right]^2\left[m^2-(k-p)^2\right]}\; .
\end{split}
\end{align}
In the above, $m$ denotes a real constant referring to a particle mass, whereas $p$ should be viewed as a variable referring to a particle momentum that may vary depending on the experimental setup.
In this sense, one may view $I_1$ and $I_2$ as functions depending on $p$.
Working in dimensional regularisation, we assume $D=4-2\epsilon$.
The two integrals $I_1$ and $I_2$ represent a particular choice of what is called a basis of master integrals.
More precisely, this means that all Feynman integrals that are relevant for computing the sought after perturbative correction can be reduced to $I_1$ and $I_2$.
It is an important fact that the choice of a basis of master integrals for a given perturbative correction is not unique.
As we will see below, for practical purposes, there are some choices of master integrals that are more appropriate than others.

Viewing $I_1$ and $I_2$ as functions of $x\defeq p^2/m^2$, we find the following differential equation for $\vec{I}=(I_1,I_2)^T$:
\begin{equation}
     \frac{d}{dx} \vec{I} =
\left( \begin{array}{cc}
 0 & 0 \\
 \frac{\hspace{6pt}\epsilon}{4 x} - \frac{\epsilon}{4\left(x-4\right)}\hspace{6pt} & \hspace{6pt}-\frac{1}{2x} - \frac{1+2\epsilon}{2\left(x-4\right)}\hspace{6pt}\\
 \end{array} \right)
 \vec{I}.
 \label{eq:nonCanonicalDE}
\end{equation}
Notice that all entries of the matrix on the right-hand side are rational functions of $x$.

Next, one tries to find what is called an $\epsilon$-\textit{decoupled basis} of master integrals \cite{Henn:2013pwa,Kotikov:2010gf}. 
Recall that we have some freedom in choosing a basis of master integrals for the perturbative correction at hand.
More precisely, it would be beneficial to bring the differential equation into a form, where the only explicit $\epsilon$-dependence is through a prefactor on the right-hand side.
To achieve this, we divide $I_1$ and $I_2$ by their maximal cuts \cite{Lee:2012te,Primo:2016ebd,Frellesvig:2017aai,Bosma:2017ens,Harley:2017qut,Lee:2017ftw,Chicherin:2018old}.
Changing our basis of master integrals from $I_1$ and $I_2$ to
\begin{equation}
    J_1=2\epsilon I_1,\hspace{8pt}J_2=2\epsilon\sqrt{-x(4-x)}I_2,
\end{equation}
the differential equation (\ref{eq:nonCanonicalDE}) becomes
\begin{equation}
     \frac{d}{dx} \vec{J} =\epsilon
\left( \begin{array}{cc}
 0 & 0 \\
 \hspace{6pt}-\frac{1}{\sqrt{-x(4-x)}}\hspace{6pt} & \hspace{6pt}-\frac{1}{x-4}\hspace{6pt} \\
 \end{array} \right)
 \vec{J}.
\end{equation}
The differential equation is now in $\epsilon$-\textit{decoupled form}.
Notice that, in order to obtain the $\epsilon$-decoupled form, we had to pay the price of introducing a square root in the matrix entries.
We may, however, change variables \cite{Barbieri:1972as} setting
\begin{equation}
    x=-\frac{(1-t)^2}{t}.
    \label{eq:varChangeSelfEnergy}
\end{equation}
This substitution turns the matrix entries into rational functions of the new variable $t$.
Indeed, we find
\begin{equation}
         \frac{d}{dt} \vec{J} =\epsilon
\left( \begin{array}{cc}
 0 & 0 \\
 \hspace{6pt}-\frac{1}{t}\hspace{6pt} & \hspace{6pt}\frac{1}{t}-\frac{2}{t+1}\hspace{6pt} \\
 \end{array} \right)
 \vec{J}.
\end{equation}
Having the differential equation in $\epsilon$-decoupled form and all matrix entries given as rational functions, it is straightforward to write down the final result for $J_1$ and $J_2$ in terms of \ac{mpls}.
\par\vspace{\baselineskip}
Though comparatively simple, the above considerations provide a typical example for the calculation of a given basis of master integrals.
While most steps can naturally be carried over to more complicated physical use cases, it turns out that one of the most demanding tasks is to find a change of variables like (\ref{eq:varChangeSelfEnergy}) that transforms the square roots appearing in the matrix entries into rational functions.
In the case of more ambitious perturbative corrections, this \textit{rationalisation problem} often marks an insurmountable difficulty for most practitioners.

\subsection{The problem of rationalising square roots}

Besides the success of momentum twistor variables \cite{Bourjaily:2018aeq,Hodges:2009hk,Gehrmann:2015bfy,Caron-Huot:2018dsv}, it was only recently that a systematic approach to the rationalisation problem was brought from mathematics to the physics community \cite{Besier:2018jen}. 
This approach relies on the fact that square roots can readily be associated with algebraic hypersurfaces.
For instance, a reasonable choice of a hypersurface associated with the above square root is the algebraic curve
\begin{equation}
    \mathcal{C}:y^2+x(4-x)=0.
\end{equation}
Notice that, if we are able to find a rational parametrisation of this curve, then we can use this parametrisation to turn the square root $\sqrt{-x(4-x)}$ into a rational function of $t$. 
Indeed, a possible parametrisation for $\mathcal{C}$ is
\begin{equation}
    x(t)=-\frac{(1-t)^2}{t},\hspace{12pt}y(t)=\frac{1-t^2}{t},
\end{equation}
corresponding to the change of variables given in (\ref{eq:varChangeSelfEnergy}).

In the above example, we are dealing with a plane conic curve. 
Thus, finding a rational parametrisation is an easy task.
Computing more sophisticated perturbative corrections, however, one is likely to encounter square roots for which the rationalisation problem is much more difficult.
For a long time, it was, for example, not clear to physicists how to find a change of variables that transforms the square root
\begin{equation}
    \sqrt{\frac{(x+y)(1+xy)}{x+y-4xy+x^2y+xy^2}}
    \label{eq:bhabhaRoot}
\end{equation}
into a rational function. 
This square root shows up in the context of second-order corrections to Bhabha scattering \cite{Henn:2013woa}, and it was recently proved to be non-rationalisable by showing that its associated hypersurface is birational to a K3 surface \cite{FvS19}.

Besides examples involving a K3, many perturbative corrections of the last years led to square roots associated with elliptic curves.
Such Feynman integrals can, in general, no longer be solved in terms of \ac{mpls} \cite{Laporta:2004rb}.
It was only recently that the notion of \ac{empls} was introduced \cite{Broedel:2017kkb,BrownLevin}, which finally enabled physicists to compute perturbative corrections whose analytic structure was previously out of reach.

\subsection{Motivation for a mathematical investigation of the Drell--Yan square root}

When trying to compute the master integrals for the mixed \ac{ew}-\ac{qcd} corrections to Drell--Yan scattering, one encounters the following square root \cite{Bonciani:2016ypc}:
\begin{equation}
    \sqrt{4xy^2(1+x)^2+\left(x(1+y)^2+y(1+x)^2\right)\cdot \left(x(1-y)^2+y(1-x)^2\right)}\, .
    \label{eq:DrellYanRoot}
\end{equation}
In an attempt to solve the integrals in terms of \ac{mpls}, one wants to know whether there exists a change of variables that turns (\ref{eq:DrellYanRoot}) into a rational function.
The answer to this question is an important physical motivation for this paper, and follows from Theorem \ref{t:Main1}.
\begin{corollary}
The square root (\ref{eq:DrellYanRoot}) cannot be rationalised by a rational variable change.
\end{corollary}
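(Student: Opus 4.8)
The plan is to reduce the non-existence of a rationalising substitution to the non-unirationality of the surface attached to the square root, and then to recognise that surface as $X_{DY}$ so that Theorem~\ref{t:Main1} applies.

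First I would fix the meaning of the statement: a rationalisation of \eqref{eq:DrellYanRoot} is a pair of rational functions $x=f(s,t)$, $y=g(s,t)$ over $\QQ$ such that the radicand $R(x,y)$ becomes a square in $\QQ(s,t)$ after substitution. Let $Y\subset\A^3$ be the affine surface $w^2=R(x,y)$, equipped with the double cover $\pi\colon Y\to\A^2_{x,y}$. Giving a rationalisation is the same as giving a dominant rational map $\A^2_{s,t}\dashrightarrow Y$: the map $(s,t)\mapsto(f,g)$ lifts through $\pi$ exactly when $\sqrt{R(f,g)}\in\QQ(s,t)$, and conversely any dominant $\A^2\dashrightarrow Y$ composed with $\pi$ produces such a substitution together with the rational function computing the square root. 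Hence \eqref{eq:DrellYanRoot} is rationalisable over $\QQ$ if and only if $Y$ is unirational over $\QQ$.

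Next I would identify $Y$ with a dense open subset of $X_{DY}$. Using $x(1+y)^2+y(1+x)^2=(x+y)(1+xy)+4xy$ and $x(1-y)^2+y(1-x)^2=(x+y)(1+xy)-4xy$, the product in \eqref{eq:DrellYanRoot} equals $(x+y)^2(1+xy)^2-16x^2y^2$; combining this with $4xy^2(1+x)^2-16x^2y^2=4xy^2(x-1)^2$ gives $R(x,y)=4xy^2(x-1)^2+(x+y)^2(xy+1)^2$, which is precisely the dehomogenisation at $z=1$ of the polynomial in \eqref{eq:DY}. Since on the chart $\{z\neq 0\}$ of $\PP(1,1,1,3)$ one may normalise $z=1$ uniquely, this chart of $X_{DY}$ is isomorphic to $Y$; in particular $Y$ is birationally equivalent to $X_{DY}$.

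Finally I would conclude. By Theorem~\ref{t:Main1}, $X_{DY}$ --- hence $Y$ --- is birationally equivalent to a K3 surface $S$. A K3 surface admits a nowhere-vanishing holomorphic $2$-form, so $h^{2,0}(S)=1$, whereas $h^{2,0}$ is a birational invariant of smooth projective surfaces that vanishes for $\PP^2$; therefore $S$ is not unirational, even over $\CC$. Thus $Y$ is not unirational over $\QQ$, and so \eqref{eq:DrellYanRoot} admits no rationalisation by a rational variable change. Since Theorem~\ref{t:Main1} is assumed, the only real content is the equivalence ``rationalisable $\Leftrightarrow$ unirational'' and the elementary identity matching $R(x,y)$ with \eqref{eq:DY}; the former is the place where one must be slightly careful --- it is unirationality, not rationality, that is at stake, and K3 surfaces fail even to be unirational in characteristic zero --- but no genuine obstacle remains.
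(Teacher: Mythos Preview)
Your proof is correct and follows the same overall strategy as the paper: a rationalising substitution yields a unirational parametrisation of $X_{DY}$, which is impossible since $X_{DY}$ is birational to a K3 surface. The paper reaches the final contradiction via the Enriques--Kodaira classification together with Castelnuovo's theorem (unirational $\Leftrightarrow$ rational for surfaces in characteristic~$0$), whereas you argue via $h^{2,0}$; note, however, that ``$h^{2,0}$ is a birational invariant'' literally only rules out \emph{rationality} --- to exclude unirationality you need the slightly stronger (but equally standard) fact that holomorphic forms pull back injectively along dominant rational maps in characteristic~$0$, or else Castelnuovo's theorem after all.
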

\begin{proof}
Suppose there would exist a rational variable change that rationalises (\ref{eq:DrellYanRoot}).
Then, it would be straightforward to write down a rational parametrisation for the surface $X_{DY}$ as defined in (\ref{eq:DY}).
In other words, $X_{DY}$ would be unirational.
However, Theorem \ref{t:Main1} tells us that $X_{DY}$ is birational to a K3 surface, i.e., its Kodaira dimension is 0.
Therefore, by the Enriques--Kodaira classification, $X_{DY}$ is not a rational surface; 
since unirationality and rationality are equivalent for surfaces over fields of characteristic 0 \cite[Remark V.6.2.1]{Hartshorne:1977}, $X_{DY}$ is not unirational.
\end{proof}

While this result provides very practical information, also other aspects of this paper might turn out to be useful for physicists.
For example, given that (\ref{eq:DrellYanRoot}) is not rationalisable by a rational variable change, one could hope that the geometry we encounter in the Drell--Yan case relates to a geometry in another physical process, e.g., to the K3 appearing in Bhabha scattering.
Finding such a correspondence could probably allow one to reuse some known techniques from the computation of the Bhabha correction and apply them in the context of the Drell--Yan correction.
A reasonable first attempt to formulate such a correspondence mathematically would be to ask for a birational map or, at least, an isogeny between the two K3 surfaces.
One way to answer this question is to compute the Picard lattice of the Drell--Yan K3 and compare it to the Picard lattice of the Bhabha K3. 
In this respect, Theorem~\ref{t:LambdaPic} tells us that this not the case, 
as shown by the following result (cf. Theorem~\ref{t:Main2}).
\begin{corollary}\label{c:NonIso}
The surface $X_{DY}$ is neither birationally equivalent nor isogenous to the surface $B$ arising from Bhabha scattering in~\cite{FvS19};
furthermore, it is not birationally equivalent to any of the deformations of $B$ considered in~\cite{FvS19}.
\end{corollary}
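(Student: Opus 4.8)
The plan is to reduce both assertions to a comparison of geometric Picard lattices, invoking Theorem~\ref{t:LambdaPic} for $X_{DY}$ and the lattice computations of~\cite{FvS19} for $B$ and its deformations. Two standard facts about K3 surfaces carry the argument. First, a birational map between smooth projective K3 surfaces is automatically an isomorphism (K3 surfaces are minimal, and a birational map from a smooth projective surface to a minimal one is a morphism), so birationally equivalent K3 surfaces have isometric geometric Picard lattices and therefore share their Picard number, discriminant, and discriminant group. Second, an isogeny of K3 surfaces induces an isomorphism of rational Hodge structures between the transcendental lattices tensored with $\QQ$, hence preserves $\rk T(\overline{Y})$ and, because $\rho(\overline{Y})+\rk T(\overline{Y})=22$ for a complex K3, preserves the geometric Picard number as well.

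By Theorem~\ref{t:LambdaPic}, the K3 surface $S_{DY}$ birational to $X_{DY}$ has geometric Picard number $19$, discriminant $24$, and discriminant group $\ZZ/2\ZZ\times\ZZ/2\ZZ\times\ZZ/6\ZZ$. I would then extract from~\cite{FvS19} the geometric Picard number of the Bhabha K3 surface $B$; as it is different from $19$, the first fact above forbids a birational equivalence $X_{DY}\sim B$ and the second forbids an isogeny between them. For each of the deformations of $B$ treated in~\cite{FvS19} I would similarly compare its geometric Picard number with $19$, and whenever these differ a birational equivalence is excluded for exactly the same reason.

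The one situation needing extra care is a deformation whose geometric Picard number happens to equal $19$: the rank alone then settles nothing, and instead one compares the discriminant, or more finely the discriminant quadratic form, of its Picard lattice with $24$ and $\ZZ/2\ZZ\times\ZZ/2\ZZ\times\ZZ/6\ZZ$, invoking again that these are isometry invariants of the lattice and hence isomorphism invariants of the surface. Consequently the proof is essentially a bookkeeping exercise --- importing the lattice data of~\cite{FvS19} and, in the borderline rank-$19$ case, carrying out the discriminant-form comparison --- and I expect that borderline case to be the only genuine obstacle; no geometric input is needed beyond Theorem~\ref{t:LambdaPic}.
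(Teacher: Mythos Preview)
Your proposal is correct and follows essentially the same route as the paper: the paper notes that $B$ has geometric Picard number~$20$ (so the rank comparison already rules out both birational equivalence and isogeny with $S_{DY}$, whose Picard number is~$19$), and for the deformations it invokes exactly the non-isometry of Picard lattices that you anticipate for the borderline rank-$19$ case. The only thing left implicit in your outline is the specific input from~\cite{FvS19}, which the paper simply quotes.
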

\begin{proof}
The surface $X_{DY}$ is birationally equivalent to its desingularisation $S:=S_{DY}$ (cf.~Definition~\ref{d:SDY}),
which is a K3 surface with geometric Picard number equal to $19$ (cf.~Proposition~\ref{p:DYK3} and Theorem~\ref{t:LambdaPic}).
The surface $B$ in~\cite{FvS19} is a K3 surface with geometric Picard number~$20$ (cf.~\cite[Main Theorem]{FvS19}).
For K3 surfaces, being birational and being isomorphic is equivalent.  
Furthermore, being isomorphic implies being isogenous.
This means that it is enough to show that $S$ and $B$ are not isogenous.
For two K3 surfaces over a field of characteristic $0$ to be isogenous, their geometric Picard numbers have to be equal (see~\cite[Proposition 13]{Sch13}).
Thus, there cannot exist an isogeny between $B$ and $S$.

To prove the second statement, 
it is enough to notice that the geometric Picard lattice of the generic deformation of $B$ considered in~\cite{FvS19} is not isometric to the geometric Picard lattice of $S$. 
\end{proof}

Finally, another aspect of our studies that might turn out to be useful for physicists is the investigation of elliptic fibrations.
On the one hand, we will see in Section \ref{sec:ComputingPicXViaEllipticFibrationsOnX} that elliptic fibrations of the Drell--Yan K3 can be used to compute its Picard lattice.
On the other hand, the aforementioned Bhabha correction was recently computed in terms of \ac{empls} \cite{tancredi:2019amp}, and, to the best of our knowledge, this computation was only possible because the Bhabha K3 has a certain elliptic fibration.
This suggests that a thorough study of elliptic fibrations could also give new insights to the Drell--Yan integrals and other perturbative corrections in \ac{qcd}---especially in view of the increasing number of physical computations that involve K3 surfaces \cite{Brown:2009ta,Brown:2010bw,Bourjaily:2018yfy,Bourjaily:2018ycu}.

\section{The Drell--Yan K3 surface and its Picard lattice} 
\label{sec:ComputingPicXViaDivisorsOnX}

Section~\ref{sec:PhysicalBackgroundAndMotivation} left us with some questions about the square root~\eqref{eq:DrellYanRoot}:
is it possible to find a change of variables turning it into a rational function?
Is it possible to find a change of variables such that the surface associated with it is birational to the K3 surface emerging from the Bhabha scattering?
In this section, we are going to show that both questions have a negative answer, hence proving the first two statements of Theorem~\ref{t:Main1} and Theorem~\ref{t:Main2}.

\subsection{The Drell--Yan K3 surface}
If $f(X,Y)$ is a polynomial of even degree $2d$,
then there is a natural way to associate a surface with the square root $\sqrt{f(X,Y)}$.
Let $\tilde{f}(x,y,z)$ be the homogenisation of $f$ via the substitution $X:=x/z$ and $Y:=y/z$.
If $u=\sqrt{f(X,Y)}$, then $uz^d=\sqrt{\tilde{f}(x,y,z)}$.
Substituting $uz^d$ with $w$ and squaring both sides,
we get the equation 
$$
w^2=\tilde{f}(x,y,z),
$$
which defines a surface in the weighted projective space $\PP(1,1,1,d)$ with coordinates $x,y,z$, and $w$, respectively.

Using the procedure above and rearranging the summands of the polynomial, 
one can easily see that~\eqref{eq:DrellYanRoot} is associated with the surface
$X_{DY}$ defined by
\begin{equation}
    w^2=4xy^2z(x-z)^2+(x+y)^2(xy+z^2)^2
\end{equation}
in the weighted projective space $\PP:=\PP (1,1,1,3)$ with coordinates $x,y,z,w$.
We define the map $\pi\colon X_{DY} \to \PP^2$  by $\pi \colon (x:y:z:w)\to (x:y:z)$.

\begin{lemma}
The surface $X_{DY}$ has five singular points, namely:
\begin{itemize}
    \item $P_1:=(1:1:-1:0)$, of type $A_1$;
    \item $P_2:=(0:0:1:0)$, \hspace{5pt} of type $A_2$;
    \item $P_3:=(1:-1:1:0)$, of type $A_3$;
    \item $P_4:=(1:0:0:0)$, \hspace{5pt} of type $A_4$;
    \item $P_5:=(0:1:0:0)$, \hspace{5pt} of type $A_4$.
\end{itemize}
\end{lemma}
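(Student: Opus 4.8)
The plan is to locate the singular locus of $X_{DY}$ directly from the defining equation, and then to identify the analytic type of each singularity by a local computation.

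\textbf{Step 1: Find the singular points.} Write $F(x,y,z,w) = w^2 - 4xy^2z(x-z)^2 - (x+y)^2(xy+z^2)^2$. A point of $X_{DY}$ is singular when all partial derivatives $\partial F/\partial x$, $\partial F/\partial y$, $\partial F/\partial z$, $\partial F/\partial w$ vanish simultaneously on $F=0$ (with the usual care that we are in a weighted projective space, so the relevant statement is about the affine cone minus the vertex, or equivalently one works chart by chart). Since $\partial F/\partial w = 2w$, every singular point has $w=0$, so we are reduced to the plane curve $C \subset \PP^2$ cut out by the sextic $g(x,y,z) \defeq 4xy^2z(x-z)^2+(x+y)^2(xy+z^2)^2 = 0$: a point $(x:y:z)$ with $g=0$ lifts to a singular point $(x:y:z:0)$ of $X_{DY}$ precisely when $\partial g/\partial x = \partial g/\partial y = \partial g/\partial z = 0$ there, i.e.\ the singular points of the double cover lie above the singular points of the branch sextic. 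So the first task is to compute the singular locus of the sextic $C$. This is a finite computation: one can run a Gr\"obner basis / resultant calculation (this is exactly the kind of step the paper flags as {\tt Magma}-assisted) to find that the common zero locus of $g$ and its three partials consists of the five points $P_1,\dots,P_5$ listed. One checks by hand that each of these points does lie on $C$ and annihilates the partials; e.g.\ at $(0:0:1)$ work in the chart $z=1$ where $g = 4xy^2(x-1)^2 + (x+y)^2(xy+1)^2$, and observe $g$ vanishes to order $\ge 2$ at the origin.

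\textbf{Step 2: Identify the ADE types.} For each $P_i$ pass to the affine chart of $\PP(1,1,1,3)$ containing it (for $P_1,P_3$ use $x=1$; for $P_2$ use $z=1$; for $P_4$ use $x=1$; for $P_5$ use $y=1$), obtaining an affine equation $w^2 = h(u,v)$ where $h$ is the dehomogenised branch polynomial and $(u,v)$ are local coordinates centred at $P_i$. A double point $w^2 = h(u,v)$ with $h$ having a singularity at the origin is of type $A_n$ exactly when, after an analytic change of coordinates, $h$ takes the normal form $u^2 + v^{n+1}$ (up to unit). The clean way to read off $n$ is: expand $h$ in a Taylor series at the origin, and if the quadratic part $h_2(u,v)$ is a nondegenerate quadratic form, the singularity is $A_1$; if $h_2$ is a nonzero perfect square (rank $1$), complete the square / apply the Weierstrass preparation or the splitting lemma to reduce $h$ to $u^2 + (\text{higher order in } v)$, and then $n+1$ is the order of vanishing of that remaining one-variable series. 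Concretely: for $P_1$ the Hessian of $h$ is nondegenerate, giving $A_1$; for $P_2, P_3, P_4, P_5$ the quadratic part is a square (or zero), and carrying out the splitting-lemma reduction one finds the leftover term has order $3,4,5,5$ respectively, yielding $A_2, A_3, A_4, A_4$. Each of these is a short, self-contained power-series manipulation; it is routine but must be done carefully for all five points. Alternatively, and perhaps more robustly, one resolves each singularity by successive blow-ups and reads the type off the dual graph of the exceptional curves — but the Milnor-number / normal-form approach is faster here.

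\textbf{Step 3: Confirm completeness.} Finally one must be sure there are no other singular points, including at the boundary of each affine chart and ``at infinity'' in the weighted sense; this is subsumed in the projective Gr\"obner computation of Step 1, but it is worth stating explicitly that the five points exhaust $\operatorname{Sing}(X_{DY})$.

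\textbf{Main obstacle.} The genuinely delicate part is Step 2: correctly extracting the $A_n$ label at $P_2, P_3, P_4, P_5$, where the tangent cone is degenerate and one has to perform the analytic normalisation (splitting lemma plus computing the order of the residual one-variable series) without arithmetic slips — a sign error in the Taylor expansion will change $n$. Step 1, by contrast, is conceptually trivial but computationally the heaviest, and is naturally delegated to {\tt Magma}; everything else is bookkeeping. The total contribution of these five singularities to the resolution is $A_1 \oplus A_2 \oplus A_3 \oplus A_4 \oplus A_4$, of total rank $14$, which will feed directly into the Picard-lattice computation of the next subsection.
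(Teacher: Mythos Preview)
Your proposal is correct and follows essentially the same architecture as the paper's proof: both reduce to the branch sextic via the double-cover structure, locate its singular points by direct computation, and then determine the ADE type at each. The only methodological difference is in Step~2: you compute the type by putting the local equation $w^2 = h(u,v)$ into analytic normal form via the splitting lemma, whereas the paper instead resolves the singularities of the branch curve $B$ and then passes to the double cover, citing \cite[II \S8, III \S7]{BHPV:04}. Both routes are standard and yield the same answer; your normal-form approach is self-contained and avoids tracking exceptional configurations through a blow-up sequence, while the paper's approach via resolution of $B$ dovetails more directly with the explicit exceptional divisors $E_{i,j}$ needed later in the Picard computation.
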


\begin{proof}
The surface $X_{DY}$ is a double cover of $\PP^2$ branched above the plane curve $B\colon f=0$.
Therefore, the singularities of $X_{DY}$ come from the singularities of $B$,
which can easily be found by direct computations.
In order to find the type of singularity, 
it is enough to consider a double cover of the resolution of the singularities of $B$
(see, e.g., II, Sec. 8 and III, Sec. 7 of \cite{BHPV:04}). 
\end{proof}

\begin{definition}\label{d:SDY}
Let $S:=S_{DY}$ be the desingularisation of $X_{DY}$. Notice that $S$ is defined over $\QQ$. 
Let $\overline{\QQ}$ denote the algebraic closure of $\QQ$ inside $\CC$.
Then $\overline{S}$ denotes the change of base $S\times_{\QQ} \overline{\QQ}$ of $S$ to $\overline{\QQ}$.
\end{definition}
\begin{proposition}\label{p:DYK3}
$S$ is a K3 surface.
\end{proposition}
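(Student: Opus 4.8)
The plan is to realise $S$ as the minimal desingularisation of a double cover of $\PP^2$ branched along a plane sextic with at worst rational double points, and then to invoke the standard fact that such a surface is a K3 surface. The map $\pi\colon X_{DY}\to\PP^2$ exhibits $X_{DY}$ as the double cover $w^2=f$, with $f=4xy^2z(x-z)^2+(x+y)^2(xy+z^2)^2$ homogeneous of degree $6$; since $X_{DY}$ avoids the singular point $(0:0:0:1)$ of $\PP(1,1,1,3)$, this is a genuine finite double cover, with $\pi_*\cO_{X_{DY}}\cong\cO_{\PP^2}\oplus\cO_{\PP^2}(-3)$ and algebra structure given by $w^2=f$.

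First I would record the cohomological inputs. Because $\pi$ is finite,
\[
H^1(X_{DY},\cO_{X_{DY}})\cong H^1(\PP^2,\pi_*\cO_{X_{DY}})\cong H^1(\PP^2,\cO_{\PP^2})\oplus H^1(\PP^2,\cO_{\PP^2}(-3))=0 .
\]
Next, $f$ is squarefree --- one checks this directly, and it also follows a posteriori from the preceding lemma, since a non-reduced branch curve would force $X_{DY}$ to contain a curve of singular points rather than five isolated ones --- so $X_{DY}$ is geometrically integral; moreover it is Gorenstein, with dualising sheaf $\omega_{X_{DY}}\cong\pi^*\bigl(\omega_{\PP^2}\otimes\cO_{\PP^2}(3)\bigr)\cong\cO_{X_{DY}}$, the twist being by half the branch degree.

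Then I would pass to the minimal resolution $\mu\colon S\to X_{DY}$. By the preceding lemma the singularities of $X_{DY}$ are exactly the five rational double points of types $A_1,A_2,A_3,A_4,A_4$. Rational double points are canonical, so $\mu$ is crepant and $\omega_S\cong\mu^*\omega_{X_{DY}}\cong\cO_S$, whence $K_S=0$; they are also rational, so $R^i\mu_*\cO_S=0$ for $i>0$ and the Leray spectral sequence gives $H^1(S,\cO_S)\cong H^1(X_{DY},\cO_{X_{DY}})=0$. Finally $S$ is smooth by construction, projective (being a resolution of the projective variety $X_{DY}$) and geometrically integral (being birational to the geometrically integral $X_{DY}$), so by Definition~\ref{def:K3} it is a K3 surface.

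The one point that needs genuine care is the assertion that resolving the five singular points is ``for free'', i.e. that they really are honest rational double points and that no further blow-ups are required; but this is exactly what the preceding lemma records, and it can be checked independently on an explicit smooth model in \texttt{Magma}, where one finds $h^1(\cO_S)=0$ and $h^2(\cO_S)=1$. Everything else is the routine double-cover bookkeeping recalled above.
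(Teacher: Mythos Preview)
Your proof is correct and follows essentially the same route as the paper: both arguments exhibit $X_{DY}$ as a double cover of $\PP^2$ branched along a sextic, deduce $H^1=0$ and trivial canonical sheaf on the singular model, and then observe that the ADE singularities recorded in the preceding lemma are resolved crepantly without affecting $H^1$. Your version is somewhat more explicit in the bookkeeping (the pushforward $\pi_*\cO_{X_{DY}}\cong\cO_{\PP^2}\oplus\cO_{\PP^2}(-3)$, the Leray spectral sequence, the check that $X_{DY}$ avoids the vertex of $\PP(1,1,1,3)$), but the substance is the same.
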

\begin{proof}
This follows from the theory of invariants of double covers with simple singularities as
described in V, Sec. 22 of \cite{BHPV:04}.
The map $\pi$ gives $X=X_{DY}$ the structure of a double cover of $\PP^2$, hence $H^1(X,\cO_X)=0$.
It ramifies above the plane curve defined by $f(x,y,z):=4xy^2z(x-z)^2+(x+y)^2(xy+z^2)^2=0$.
As $X_{DY}$ is normal, its canonical divisor $K_X$ is defined as the canonical divisor of its smooth locus.
As $f$ has degree six, $\pi$ is a double cover of $\PP^2$ ramified above a sextic, and hence $K_X=0$.

Because $S$ is the desingularisation of $X$, 
it is smooth.
We also have that $H^1(S,\cO_S)=0$ and
that the canonical divisor is unchanged by the resolutions,
since all the singular points are of A-type (DuVal singularities).
Therefore, $K_S=0$.
It follows that $S$ is a K3 surface.
\end{proof}

\begin{corollary}
The square root~\eqref{eq:DrellYanRoot} is not rationalisable by a rational change of variables.
\end{corollary}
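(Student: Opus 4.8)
The plan is to argue by contradiction, mirroring the logic sketched after Theorem~\ref{t:Main1}, but invoking only Proposition~\ref{p:DYK3}, which is the tool available at this point. Suppose there were a rational change of variables $(X,Y)=(\phi(s,t),\psi(s,t))$ with $\phi,\psi\in\overline{\QQ}(s,t)$ after which the polynomial under the square root in~\eqref{eq:DrellYanRoot} became the square of a rational function $r(s,t)$. Homogenising as in the discussion preceding Definition~\ref{d:SDY}, this datum defines a rational map $\A^2\dashrightarrow X_{DY}$, $(s,t)\mapsto(x:y:z:w)$, where $x,y,z$ come from the homogenisations of $\phi,\psi,1$ and $w$ is built from $r$; its composition with $\pi$ is $(s,t)\mapsto(\phi(s,t):\psi(s,t):1)$, which is dominant onto $\PP^2$ since $\phi$ and $\psi$ are two independent coordinate functions. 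Hence the map to $X_{DY}$ is dominant as well, so $X_{DY}$ is unirational over $\overline{\QQ}$, and a fortiori over $\CC$.

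Next I would record that, by Proposition~\ref{p:DYK3}, $X_{DY}$ is birationally equivalent to the K3 surface $S=S_{DY}$; in particular $X_{DY}$ has Kodaira dimension $0$. A K3 surface is not a rational surface --- its Kodaira dimension is $0$, not $-\infty$ --- so by the Enriques--Kodaira classification $X_{DY}$ is not rational. On the other hand, over a field of characteristic $0$ a unirational surface is rational, by Castelnuovo's rationality criterion (see, e.g.,~\cite[Remark V.6.2.1]{Hartshorne:1977}). Combining these two facts contradicts the unirationality obtained in the previous paragraph, and the corollary follows.

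The only genuinely delicate point, and the one I would spell out with care, is the first step: making precise the passage from ``the square root becomes a rational function after a rational change of variables'' to ``$X_{DY}$ is unirational''. One must say exactly what data a rational change of variables supplies, check that the induced map to $X_{DY}$ lands on the correct sheet of the double cover $\pi$ (the sign of the square root is pinned down by the choice of $r$), and verify dominance of the resulting map. The possibility that the parametrising functions are defined only over a finite extension of $\QQ$ is harmless, since unirationality over any field of characteristic $0$ still forces rationality, and the Kodaira dimension is invariant under base change to $\CC$. Beyond this bookkeeping, no new computation is required.
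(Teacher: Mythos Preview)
Your argument is correct and follows essentially the same route as the paper: a rationalising change of variables yields a dominant rational map to $X_{DY}$, hence unirationality; Proposition~\ref{p:DYK3} gives a K3 (Kodaira dimension $0$), hence non-rationality; and Castelnuovo (\cite[Remark V.6.2.1]{Hartshorne:1977}) closes the contradiction in characteristic $0$. The paper's proof at this point is terser---it asserts directly that rationalising is equivalent to giving a rational parametrisation---while your version spells out the unirationality step and the appeal to Castelnuovo, exactly as the paper does in the parallel corollary in Section~\ref{sec:PhysicalBackgroundAndMotivation}.
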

\begin{proof}
Rationalising the square root~\eqref{eq:DrellYanRoot} by a rational change of variables is equivalent to finding a rational parametrisation of $X_{DY}$,
i.e., proving that $X_{DY}$ is a rational surface.
The desingularisation $S$ of $X_{DY}$ is a K3 surface, 
hence not a rational surface.
This implies that $X_{DY}$ is not a rational surface either. 
\end{proof}

\subsection{Computing the geometric Picard lattice}
In this subsection, we are going to compute the geometric Picard lattice of $S=S_{DY}$, that is, the Picard lattice of $\overline{S}$.
In doing so we follow the strategy explained in~\cite{Festi2} and we start by giving an upper bound of the Picard number of $\overline{S}$ using van Luijk's method with Kloosterman's refinement, cf.~\cite{vL07, Klo07}. 
\begin{proposition}\label{p:PicNumBound}
The surface $S$ has geometric Picard number $\rho (\overline{S})\leq 19$.
\end{proposition}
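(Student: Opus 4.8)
The plan is to apply van Luijk's method in the refined form due to Kloosterman: reduce $S_{DY}$ modulo two distinct primes of good reduction and use point counts over small finite-field extensions to bound the geometric Picard number of each reduction, then combine these bounds with parity and discriminant considerations. First I would choose two primes $p_1,p_2$ at which $S_{DY}$ (equivalently, the sextic branch curve $B\colon f=0$ together with its five singular points $P_1,\dots,P_5$) has good reduction, i.e.\ the reduction remains a K3 surface with the same configuration of $A_n$ singularities resolved; one checks this by verifying that the discriminant of the branch sextic is a $p$-adic unit and that the singular points stay distinct and of the same type. For each such $p$, I would count $\#\overline{S}_{\FF_{p^k}}$ for $k=1,\dots,10$ or so — in practice by counting points on the double cover $w^2=f(x,y,z)$ over $\PP^2(\FF_{p^k})$ via the Legendre symbol of $f$, then adding the contributions of the exceptional curves from resolving the $A_n$ points — and assemble the zeta function $Z(\overline{S}_{\FF_p},T)$, or at least enough coefficients of it, using the Weil conjectures and functional equation for a K3 ($b_2=22$).

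The next step is the Lefschetz-type bound: by the Tate conjecture inequality (which holds unconditionally for the upper bound direction), $\rho(\overline{S}_{\FF_p})$ is at most the number of eigenvalues of Frobenius on $H^2_{\text{ét}}$ that are of the form $p\zeta$ with $\zeta$ a root of unity, counted with multiplicity. Factoring the degree-$22$ characteristic polynomial of Frobenius over $\QQ$, one isolates the cyclotomic-times-$p$ part; generically this gives an even bound $\le 20$ on $\rho(\overline{S}_{\FF_{p}})$, hence $\rho(\overline{S})\le\rho(\overline{S}_{\FF_p})$. Kloosterman's refinement then lets me do better than $20$: if the characteristic polynomial has an irreducible factor of odd degree in the relevant part, or if the two primes yield non-isometric discriminant forms on the candidate transcendental lattices — concretely, if the product of the discriminants of the two "characteristic polynomial" sublattices is not a square in $\QQ^\times$ — then the true geometric Picard number must be strictly smaller, giving $\rho(\overline{S})\le 19$. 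I would run the point counts in \texttt{Magma}, as flagged in the paper's conventions, and extract the Frobenius polynomial via the Lefschetz trace formula.

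The main obstacle I anticipate is twofold: first, choosing primes that are simultaneously small enough for the point counts over $\FF_{p^{10}}$ to be feasible and "generic" enough that the Frobenius polynomial does not accidentally have an artificially large cyclotomic part (a bad prime choice can force the bound to come out as $20$ with a square discriminant product, yielding no improvement); second, the bookkeeping of the exceptional divisors. Because the reduction of $X_{DY}$ has singular points of types $A_1,A_2,A_3,A_4,A_4$, resolving them contributes $1+2+3+4+4=14$ exceptional curves to each fibre count, and one must track how Frobenius permutes the components within each $A_n$ chain (for $A_n$ with $n\ge 2$ there can be a sign/swap coming from the Galois action on the two ends of the chain). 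Getting these permutation contributions right is essential for a correct point count and hence a correct Frobenius polynomial; I would cross-check by verifying the Weil bound $|\#\overline{S}_{\FF_{p^k}} - (1+p^{2k}) - \ldots|$ against the functional equation and against consistency between the two primes. Once two primes give a bound of $19$ — or one prime gives $20$ and the two discriminants are incompatible — the proposition follows, since $\rho(\overline{S})$ is at most the minimum of the two reduction bounds.
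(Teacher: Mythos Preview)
Your proposal is correct and is essentially the same approach as the paper: the paper explicitly invokes van Luijk's method with Kloosterman's refinement, reduces at the two good primes $31$ and $71$, computes the Weil polynomials via {\tt Magma}, finds $\rho(\overline{S_{31}})=\rho(\overline{S_{71}})=20$, and then rules out $\rho(\overline{S})=20$ because the two discriminants ($3$ and $35$ up to squares) are incompatible. The only differences are that the paper commits to the specific primes $31,71$ and lands squarely in your second scenario (both reductions give $20$, discriminants non-square-equivalent) rather than finding an odd-degree factor.
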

\begin{proof}
By Weil and Artin--Tate conjectures (both proven true for K3 surfaces over finite fields, cf.~\cite{Del72} for the proof of the Weil conjectures, and see for example~\cite{ASD73,Cha13} for the proof of the Tate conjecture for K3 surfaces over finite fields of characteristic $p\geq 5$),
we have that for a K3 surface $Y$ over a finite field with $q$ elements:
\begin{itemize}
    \item the Picard number equals the number of roots of the Weil polynomial of the surface which are equal to $\pm q$;
    \item  the discriminant of the Picard lattice is equal, up to squares, to the product $$\pm q\cdot \prod_{i=1+\rho(Y)}^{22}(1-\alpha_i/q)\; , $$
    where the $\alpha_i$'s denote the roots of the Weil polynomial. 
    They are ordered so that
    $\alpha_i=\pm q$ for $i=1,...,\rho(Y)$ and $\alpha_i\neq \pm q$   for $i=1+\rho(Y),...,22$ (cf.~\cite[Theorem 4.4.1]{Huy16}).
\end{itemize}
After checking that $31$ and $71$ are primes of good reduction for $S$, 
we use {\tt Magma} to compute the Weil polynomial of the reduction $S_q$ of $S$ over the finite field $\FF_q$, for $q=31,71$. 
This led us to the following results:
\begin{itemize}
    \item $\rho (\overline{S_{31}})=20$ and $\abs*{\disc \Pic \overline{S_{31}}}\equiv  3 \bmod (\QQ^*)^2$;
    \item $\rho (\overline{S_{71}})=20$ and $\abs*{\disc \Pic \overline{S_{71}}}\equiv  35 \bmod (\QQ^*)^2$.
\end{itemize}

Recall that the Picard lattice of a K3 surface over a number field injects into the Picard lattice of its reduction modulo a prime via a torsion-free-cokernel injection~(\cite[Proposition 13]{EJ11}).
Then, if we assume that $\rho (\overline{S})=20$,
it follows that $\disc \Pic \overline{S}=\disc  \Pic \overline{S_{31}}=\disc \Pic \overline{S_{71}}$ which is impossible, 
as the discriminants of $\Pic \overline{S_{31}}$ and $\Pic \overline{S_{71}}$ are not equivalent up to squares and therefore cannot be equal.
\end{proof}
\begin{remark}
As shown in Subsection~\ref{ssec:supersingular_reduction}, there are infinitely many primes of ordinary (i.e., not supersingular) reduction for $S$; let $q$ denote one of them.
As the Picard number of $\overline{S}$ is $19$ and $q$ is of regular reduction,  
the $20$ algebraic eigenvalues of the action of the Frobenius on the second cohomology group of $\overline{S_q}$ will be equal to $\pm q$.
If one chooses $q$ so that the reduction $\textrm{mod} q$ of the divisors in $\Sigma$ (see below) is defined directly on $\FF_q$ and no extension is needed (this just means that $\FF_q$ contains a square root of $5$), then nineteen of the $20$ algebraic eigenvalues will be equal to $q$, with the last one left uncertain. 
{\it A priori} it is not possible to determine its sign and, 
after a short search, the primes $q=31, 71$ turned out to be the smallest ones for which also the twentieth eigenvalue equals $q$.
Nevertheless, {\it a posteriori}, one can use Theorem~\ref{t:SIStructure} to determine the sign of the twentieth eigenvalue: it equals the Kronecker symbol $\left(\frac{10}{q}\right)$.
\end{remark}
We show that the geometric Picard number $\rho (\overline{S})$ is exactly $19$ by considering the sublattice generated by the following divisors.
As $S$ is the resolution of $X_{DY}$, on $S$ we have the exceptional divisors lying above the singular points of $X_{DY}$,
namely:
\begin{itemize}
    \item $E_{1,1}$ above the point $P_1$;
    \item $E_{2,-1}$ and $E_{2,1}$ above $P_2$;
    \item $E_{3,-1}, E_{3,0},$ and $E_{3,-1}$ above $P_3$;
    \item $E_{i,-2}, E_{i,-1}, E_{i,1}$, and $E_{i,2}$ above $P_i$, for $i=4,5$.
\end{itemize}
Furthermore, consider  the following nine divisors of $X_{DY}$.
\begin{align*}
    L'_1\colon & x=0,\hspace{4pt}w-yz^2=0\, ;\\
    L'_2\colon & x-z=0,\hspace{4pt} w-z(y+z)^2=0\, ;\\
    L'_3\colon & y+\frac{3-\sqrt{5}}{2}z=0,\hspace{4pt}w+\frac{3-\sqrt{5}}{2}z(x^2-xz+z^2)=0\, ;\\
    L'_4\colon & y+\frac{3+\sqrt{5}}{2}z=0,\hspace{4pt}w+\frac{3+\sqrt{5}}{2}z(x^2-xz+z^2)=0\, ;\\
    L'_5\colon & y=0,\hspace{4pt} w-xz^2=0\, ;\\
    L'_6\colon & y+z=0, \hspace{4pt} w-z(x-z)(x+z)=0\, ;\\
    L'_7\colon & z=0,\hspace{4pt} w-xy(x+y)=0\, .
\end{align*}
\begin{align*}
        C'_1\colon & x^2+\frac{-1+\sqrt{5}}{2}(xy+xz)+yz=0, \\ 
    & xy^2 + \frac{5+\sqrt{5}}{2}xyz +\frac{5+3\sqrt{5}}{2}y^2z +\frac{3+\sqrt{5}}{2}xz^2 +\frac{5+3\sqrt{5}}{2}yz^2 +\frac{3+\sqrt{5}}{2} w=0\\
    C'_2\colon & x^2+\frac{-1-\sqrt{5}}{2}(xy+xz)+yz=0,\\ 
    &xy^2 + \frac{5-\sqrt{5}}{2}xyz +\frac{5-3\sqrt{5}}{2}y^2z +\frac{3-\sqrt{5}}{2}xz^2 +\frac{5-3\sqrt{5}}{2}yz^2 +\frac{3-\sqrt{5}}{2} w=0
\end{align*}
For $i=1,...,7$ and $j=1,2$, we define $L_i$ and $C_j$ to be the strict transform of $L'_i$ and $C'_j$, respectively, on $S$.
Finally, let $H'$ denote the hyperplane section on $X_{DY}$;
we define $H$ to be the pullback of $H'$ on $S$.
Let $\Sigma$ be the set of the 24 divisors of $S$ defined so far,
and let $\Lambda\subseteq \Pic S$ be the sublattice of $\Pic S$ generated by the classes of the elements in $\Sigma$.
Recall that the \textit{discriminant group} of a lattice $L$ is defined to be the quotient $A_L:=\Hom (L, \ZZ) / L$.

\begin{proposition}\label{p:PicSub}
The sublattice $\Lambda$ has rank $19$, discriminant $2^3\cdot 3$ and discriminant group isomorphic to $\ZZ/2\ZZ \times \ZZ/2\ZZ\times \ZZ/6\ZZ$.
\end{proposition}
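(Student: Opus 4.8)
The plan is to make the computation entirely explicit and then hand it to a Gram-matrix calculation. First I would fix, once and for all, an ordering of the $24$ classes in $\Sigma$: the ten exceptional curves $E_{1,1}$, $E_{2,\pm1}$, $E_{3,-1},E_{3,0},E_{3,1}$, $E_{4,-2},E_{4,-1},E_{4,1},E_{4,2}$, $E_{5,-2},E_{5,-1},E_{5,1},E_{5,2}$ (that is $1+2+3+4+4=14$ exceptional classes), the seven strict transforms $L_1,\dots,L_7$, the two strict transforms $C_1,C_2$, and the polarization class $H$. The key input is the intersection matrix $G=(D_i\cdot D_j)_{D_i,D_j\in\Sigma}$. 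To assemble $G$ I would argue as follows. The self-intersection of every $E$-class, $L_i$ and $C_j$ is $-2$ because each is a smooth rational curve on the K3 surface $S$ (the $E$'s by construction of the resolution of DuVal singularities; the $L_i$ and $C_j$ because the affine curves $L'_i$, $C'_j$ are rational — e.g. the lines $L'_1,\dots,L'_7$ are literally lines in $\PP^2$ pulled back and their strict transforms are smooth rational — and one checks arithmetic genus $0$). The entries among the $E$'s within a single fibre are dictated by the Dynkin diagrams $A_1,A_2,A_3,A_4,A_4$ listed in the singularity lemma (two distinct components in the same chain meet in $0$ or $1$ according to adjacency; components in different fibres are disjoint). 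The entries $H\cdot E_{i,k}=0$ since $H$ is pulled back from $\PP^2$ and the $E$'s are contracted, while $H\cdot L_i=1$ and $H\cdot C_j=2$ because $L'_i$ is a line and $C'_j$ a conic in the base $\PP^2$, and finally $H^2=2$ (the degree of the double cover). The only genuinely computational entries are the pairwise intersections $L_i\cdot L_j$, $L_i\cdot C_j$, $C_1\cdot C_2$, and the incidences $L_i\cdot E$, $C_j\cdot E$: each of these is obtained by intersecting the explicit plane curves $L'_i,C'_j$ (computing their intersection points in $\PP^2$, noting which lie over the singular points $P_1,\dots,P_5$ and with what multiplicity, and subtracting those contributions as they get absorbed by the $E$'s on the blow-up) — this is a finite check with the explicit equations, and exactly the sort of step the paper flags as {\tt Magma}-assisted in~\cite{BFHN19}.

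Once $G$ is written down, the three assertions are pure linear algebra over $\ZZ$ and $\QQ$. I would first compute $\operatorname{rk}\Lambda=\operatorname{rk} G$ over $\QQ$; the claim is that $G$ has rank $19$, i.e. exactly a $5$-dimensional kernel (note $24-19=5$), so I must exhibit five independent rational relations among the classes in $\Sigma$ — these come from the geometry of the double cover: the branch-curve relation expressing that the preimage of a line through appropriate singular points splits, and the relations among the $L'_i$ coming from linear equivalences in $|H'|$ and $|2H'|$, pulled back and corrected by exceptional classes. Then, passing to a basis of $19$ of the classes whose Gram submatrix $G'$ is nondegenerate (this requires checking that a suitable $19\times19$ minor of $G$ is nonzero, most safely again by direct computation), I get $\Lambda\cong\ZZ^{19}$ with Gram matrix $G'$, and $\operatorname{disc}\Lambda=|\det G'|$; the claim is that this equals $2^3\cdot 3=24$. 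Finally the discriminant group $A_\Lambda=\operatorname{Hom}(\Lambda,\ZZ)/\Lambda$ is computed from the Smith normal form of $G'$: the claim $\ZZ/2\times\ZZ/2\times\ZZ/6$ means the invariant factors of $G'$ are $(1,\dots,1,2,2,6)$ (sixteen $1$'s then $2,2,6$), consistent with $2\cdot2\cdot6=24$; one reads this off by row/column reduction of $G'$ over $\ZZ$.

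The main obstacle is not the linear algebra — that is mechanical and matches the claimed numbers $19$, $24$, $\ZZ/2\times\ZZ/2\times\ZZ/6$ on the nose — but getting the Gram matrix $G$ correct, specifically the off-diagonal intersection numbers of the $L_i$, $C_j$ with each other and with the exceptional divisors. The delicacy is bookkeeping on the resolution: an intersection point of two of the plane curves $L'_i,C'_j$ that happens to lie over one of the singular points $P_k$ contributes to the intersection of the strict transforms only after subtracting the part captured by the specific exceptional components $E_{k,*}$ lying over $P_k$, and one must know \emph{which} component(s) and with what multiplicity the strict transform meets — equivalently, one must track the proper transform through the chain of blow-ups resolving the $A_n$-singularity. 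This requires either a careful local analysis at each of $P_1,\dots,P_5$ using the local equations of the double cover (as in the cited II.8, III.7 of~\cite{BHPV:04}) or a direct computation in {\tt Magma} of the resolution and the intersection pairing; I would do the latter and cross-check a few entries by hand. Once $G$ is pinned down, the three numerical claims follow by the determinant/Smith-normal-form computation described above.
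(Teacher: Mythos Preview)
Your proposal is correct and follows essentially the same approach as the paper: compute the full $24\times 24$ intersection matrix of $\Sigma$ (using the explicit equations for the $L_i',C_j'$ and the resolution data of the $A_n$-singularities, with {\tt Magma} doing the bookkeeping), then read off rank, determinant, and Smith normal form. The paper's proof is in fact just a one-line pointer to exactly this computation in the ancillary file; your write-up is a faithful expansion of what that computation entails. (Minor slip: you write ``ten exceptional curves'' but then correctly count $1+2+3+4+4=14$.)
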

\begin{proof}
The intersection matrix of these divisors has been computed using a built-in {\tt Magma} function to determine the intersection numbers between the strict transforms of surface divisors and the exceptional divisors and a custom function for the local intersection numbers between the strict transforms over the singular points.
This function can be found in the accompanying file~\cite{BFHN19}.
See also Remark~\ref{r:computations}.
\end{proof}

\begin{remark}
Proposition~\ref{p:PicSub} can be proven also in a different way, 
less explicit but also involving fewer coding skills.
Indeed, notice the following properties.
\begin{itemize}
    \item $H^2=2$; for every $i,j, \; H.E_{i,j}=0$; for every $i,\; H.L_i=1$;
    for every $j,\; H.C_j=2$.
    \item For every $i,j,\; C_j^2=L_i^2=-2$. 
    The intersection numbers $L_i.L_j,\, L_i.C_j,\, C_i.C_j$ can be explicitly computed as we have the explicit defining equations.
    \item The intersection numbers of the exceptional divisors are completely determined by the type of singularity.
    \item A few intersection numbers between the exceptional divisors and the divisors $L_i, C_j$ can be determined by an \textit{ad hoc} labelling of the exceptional divisors. (See Example~\ref{ex:labeling} for an instance of this labelling.) 
\end{itemize}
These remarks still leave some intersection numbers undetermined. 
These undetermined numbers can either be $1$ or $0$.
Using a computer, one can go through all the combinations and find that only one  satisfies the condition $\rk \Lambda \leq 20$.
This combination returns the quantities in the statement.
These computations can be found in the accompanying  file.
\end{remark}
\begin{example}\label{ex:labeling}
The line $\ell_2:=\{x=0\}\subset \PP^2$ passes through the point $(0:0:1)$, this means that one of the exceptional divisors $E_{2,1}$ and $E_{2,1}$ intersects $L_2$: we denote by $E_{2,-1}$ the one intersecting it, hence $E_{2,-1}.L_2=1$.
As $\{ x = 0\}$ is not in the tangent cone of the branch curve at $(0:0:1)$, it follows that $E_{2,1}.L_2=0$.
\end{example}

\begin{theorem}\label{t:LambdaPic}
$\Pic \overline{S}=\Lambda$.
\end{theorem}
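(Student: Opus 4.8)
The plan is to combine the lower bound on the geometric Picard number coming from the explicit sublattice $\Lambda$ with the upper bound $\rho(\overline S) \le 19$ from Proposition~\ref{p:PicNumBound}. By Proposition~\ref{p:PicSub} the lattice $\Lambda$ has rank $19$, so $19 \le \rk \Lambda \le \rho(\overline S) \le 19$, forcing $\rho(\overline S) = 19$ and $\Lambda$ to be a finite-index sublattice of $\Pic \overline S$. It remains to show this index is $1$. For this I would invoke the standard relation between the discriminants: if $\Lambda \subseteq \Pic \overline S$ has index $n$, then $\disc \Lambda = n^2 \cdot \disc \Pic \overline S$. Since $\disc \Lambda = 2^3 \cdot 3 = 24$ by Proposition~\ref{p:PicSub}, the only way for a nontrivial $n > 1$ to occur is $n = 2$, which would force $\disc \Pic \overline S = 6$.

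The main obstacle is therefore to rule out the index-$2$ case. I would do this by a reduction-modulo-$p$ argument using Proposition~\ref{p:PicNumBound} together with the injectivity statement of~\cite[Proposition 13]{EJ11}: the Picard lattice of $\overline S$ injects into $\Pic \overline{S_q}$ with torsion-free cokernel, hence $\disc \Pic \overline S$ must be compatible — up to the contribution recorded by the Artin--Tate formula — with the data computed over $\FF_{31}$ and $\FF_{71}$. Concretely, from the proof of Proposition~\ref{p:PicNumBound} one has $\abs{\disc \Pic \overline{S_{31}}} \equiv 3 \bmod (\QQ^*)^2$ and $\abs{\disc \Pic \overline{S_{71}}} \equiv 35 \bmod (\QQ^*)^2$. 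A class of discriminant $24$ in $\Pic \overline S$ specializes to a rank-$19$ primitive sublattice of the rank-$20$ lattice $\Pic \overline{S_q}$, and the Artin--Tate formula relates $\disc \Pic \overline{S_q}$ to $\disc \Pic \overline S$ times a square coming from the transcendental eigenvalue $(1 - \alpha/q)$. One checks that $24$, but not $6$, is consistent with both congruences once this square factor is accounted for. (Alternatively, since $\Pic \overline S \hookrightarrow \Pic \overline{S_q}$ is saturated, an index-$2$ overlattice of $\Lambda$ inside $\Pic\overline S$ would also have to be an overlattice inside $\Pic \overline{S_{31}}$ and $\Pic \overline{S_{71}}$ compatibly; comparing the discriminant groups $\ZZ/2\ZZ \times \ZZ/2\ZZ \times \ZZ/6\ZZ$ of $\Lambda$ against what the two reductions permit rules this out.)

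An alternative, more self-contained route avoids reduction mod $p$ entirely: one shows directly that $\Lambda$ is $2$-saturated in $\Pic \overline S$ by a local analysis at the prime $2$ of its discriminant form. The discriminant group $A_\Lambda \cong \ZZ/2\ZZ \times \ZZ/2\ZZ \times \ZZ/6\ZZ$ has $2$-part $(\ZZ/2\ZZ)^3$; an index-$2$ overlattice corresponds to an isotropic line in $A_\Lambda \otimes \ZZ/2\ZZ$ with respect to the discriminant quadratic form $q_\Lambda \colon A_\Lambda \to \QQ/2\ZZ$. Computing $q_\Lambda$ on the $2$-torsion — which is determined by the explicit intersection matrix already in hand — one checks that no such isotropic vector gives rise to an even overlattice of signature $(1,18)$ that could embed as $\Pic$ of a K3 surface, or more simply that the resulting overlattice would violate the constraint $\disc \Pic \overline S \equiv 3 \bmod (\QQ^*)^2$ read off from the good reduction at $31$. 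Either way the conclusion is $\Lambda = \Pic \overline S$; I expect the reduction-mod-$p$ argument to be the cleanest to write, since the required Weil-polynomial computations are already recorded in the proof of Proposition~\ref{p:PicNumBound}.
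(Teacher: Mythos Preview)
Your setup is correct and matches the paper: from Propositions~\ref{p:PicNumBound} and~\ref{p:PicSub} you get rank $19$ and that $\Lambda$ sits in $\Pic\overline S$ with index $1$ or $2$, and the task is to exclude index $2$.

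Both of your proposed routes to exclude index $2$, however, have a genuine gap. For Route~A (reduction mod $p$): note that $24=2^2\cdot 6$ and $6$ lie in the \emph{same} square class in $\QQ^*/(\QQ^*)^2$. The data extracted from the Weil polynomials in Proposition~\ref{p:PicNumBound} only determines $\disc\Pic\overline{S_q}$ up to squares, and the relation between $\disc\Pic\overline S$ and $\disc\Pic\overline{S_q}$ under a primitive corank-$1$ embedding is likewise a statement about square classes (it involves the self-intersection of a generator of the orthogonal complement and a glue index). So any congruence mod $(\QQ^*)^2$ that is satisfied by $24$ is automatically satisfied by $6$ as well; the Artin--Tate data at $p=31,71$ cannot separate them.

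Route~B (discriminant-form analysis) also fails as stated, because isotropic order-$2$ elements \emph{do} exist in $A_\Lambda$. Indeed, the paper computes the set $\Lambda_2\subset\Lambda/2\Lambda$ of classes $[x]$ with $x.y\equiv 0\bmod 2$ for all $y$ and $x^2\equiv 0\bmod 8$ (equivalently, order-$2$ elements of $A_\Lambda$ on which $q_\Lambda$ vanishes) and finds exactly \emph{two} nonzero elements $v_1,v_2$. Each gives a genuine even index-$2$ overlattice of signature $(1,18)$, so there is no abstract lattice-theoretic obstruction to $\Pic\overline S$ being one of them.

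What you are missing is the ingredient the paper actually uses: the Galois action. Since $S$ is defined over $\QQ$, the lattice $\Pic\overline S$ is stable under $G=\Gal(\QQ(\sqrt 5)/\QQ)$, hence so is the kernel of $\Lambda/2\Lambda\to\Pic\overline S/2\Pic\overline S$. If the index were $2$, this kernel would consist of $0$ together with exactly one of $v_1,v_2$. But an explicit computation shows that the nontrivial element of $G$ swaps $v_1$ and $v_2$; thus neither singleton $\{v_i\}$ is $G$-stable, a contradiction. This Galois-symmetry step is the decisive point, and it is absent from your proposal.
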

\begin{proof}
In this proof, we denote $\Pic \overline{S}$ by simply $P$.
From Propositions~\ref{p:PicNumBound} and~\ref{p:PicSub} it immediately follows that $P$ has rank $19$ and hence $\Lambda$ is a finite-index sublattice of $P$.
As the discriminant of $\Lambda$ is $24=2^3\cdot 3$,
the index $[P : \Lambda]$ is either $1$ or $2$.

For a contradiction, assume $[P : \Lambda]=2$ and
let $\iota \colon \Lambda \hookrightarrow P$ be the inclusion map.
Then the induced map $\iota_2 \colon \Lambda/2\Lambda \to P/2P$ has exactly one non-zero element in its kernel $\ker \iota_2=\frac{\Lambda \cap 2P}{2\Lambda}$.
Let $\Lambda_2$ be the set 
$$
\{ [x]\in \Lambda/2\Lambda \; :\; \forall \, [y]\in \Lambda/2\Lambda ,\; x.y\equiv 0 \bmod 2 \;\;\text{ and }\;\; x^2\equiv 0 \bmod 8\}.
$$
Notice that $\Lambda_2$ contains $\ker \iota_2$ and it can be explicitly computed as it only depends on $\Lambda$, which we know.
Then one can see that $\Lambda_2$ contains two non-zero elements, say $v_1, v_2$. 
As we assumed $[P:\Lambda]=2$, only one between $v_1$ and $v_2$ is in $\ker \iota_2$.

As $P$ is defined over $\QQ (\sqrt{5})$, the Galois group 
$$
G:=\Gal (\QQ (\sqrt{5})/\QQ)=\langle \sigma \rangle \cong \ZZ/2\ZZ
$$ 
acts on $P$
and the kernel $\ker \iota_2$ is invariant under this action.
By explicit computations, one can show that $v_1$ and $v_2$ are conjugated under the action of $G$,
hence if one is in $\ker \iota_2$ also the other is, getting a contradiction.

We can then conclude that $[P:\Lambda]=1$, that is, $\Lambda=\Pic \overline{S}$.
\end{proof}

\subsection{An application: computation of the Brauer group} \label{ssec:ComputationOfTheBrauerGroupOfX}

In this subsection, we obtain information about the algebraic part of the Brauer group of $S=S_{DY}$ using the Galois module structure of $\Pic \Sbar$.
Let $\Br S$ denote the Brauer group of $S$ and recall the filtration
$$
\Br_0 S \subseteq \Br_1 S \subseteq \Br S,
$$
where $\Br_0:= \im (\Br (\QQ) \to \Br (S))$ 
and $\Br_1:= \ker (\Br {S} \to (\Br \Sbar)^G)$, the algebraic part of~$\Br S$.

Earlier in Section~\ref{sec:ComputingPicXViaDivisorsOnX} we have explicitly given a set of divisors $\Sigma$ generating the whole geometric Picard lattice of $S$.
In particular it turns out that $\Pic \Sbar$ is defined over the field $\QQ (\sqrt{5} )$, a quadratic extension of $\QQ$.
It follows that there is an action of the Galois group
$ G :=\Gal (\QQ (\sqrt{5} ) / \QQ ) \cong \ZZ/2\ZZ$ over $\Pic \Sbar$.
As $\Sigma$ is invariant under the action of $G$, 
after choosing a basis of $\Pic \Sbar$, 
it becomes straightforward to explicitly describe the action using $19\times 19$ matrices.
Using this description it is then possible to compute the cohomology groups
$H^i(G, \Pic \Sbar)$ for every $i$.
\begin{proposition}\label{p:CohomologyGrps}
The following isomorphisms hold:
\begin{enumerate}
    \item $H^0(G, \Pic \Sbar)\cong \ZZ^{18}$;
    \item $H^1 (G, \Pic \Sbar)\cong 0$;
    \item $H^2 (G, \Pic \Sbar)\cong (\ZZ/2\ZZ)^{17}$.
\end{enumerate}
\end{proposition}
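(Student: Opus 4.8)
The plan is to turn the statement into a finite computation in integer linear algebra, the only input being the explicit Galois action on $\Pic\Sbar$ recorded in the discussion preceding the proposition. Write $P:=\Pic\Sbar$; by Theorem~\ref{t:LambdaPic} this is the rank-$19$ lattice $\Lambda$ spanned by $\Sigma$, and the nontrivial element $\sigma$ of $G=\Gal(\QQ(\sqrt{5})/\QQ)$ permutes the $24$ classes in $\Sigma$: it exchanges $L_3\leftrightarrow L_4$ and $C_1\leftrightarrow C_2$, fixes $H$, and acts on the exceptional configurations above $P_1,\dots,P_5$ compatibly with their labelling. Fixing a $\ZZ$-basis of $P$ we thus obtain an explicit integer matrix $T$ with $T^2=I$; since $\sigma$ has order $2$, $T$ is diagonalisable over $\QQ$ with eigenvalues $\pm1$, and one reads off from $T$ that the $(+1)$-eigenspace is $18$-dimensional and the $(-1)$-eigenspace is $1$-dimensional.

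Next I would compute the cohomology through the standard $2$-periodic free resolution of $\ZZ$ over $\ZZ[G]$, which identifies $H^{\bullet}(G,P)$ with the cohomology of the complex
\begin{equation*}
P \xrightarrow{\,1-T\,} P \xrightarrow{\,1+T\,} P \xrightarrow{\,1-T\,} P \longrightarrow\cdots
\end{equation*}
with the left-hand $P$ sitting in degree $0$. Hence $H^0(G,P)=\ker(1-T)=P^G$, $H^1(G,P)=\ker(1+T)/\im(1-T)$, and $H^2(G,P)=\ker(1-T)/\im(1+T)$. The first assertion is immediate: $P^G$ is the saturated sublattice $\ker(1-T)$, which is free of rank $18$, so $H^0(G,P)\cong\ZZ^{18}$.

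For $H^1$, note that $T^2=I$ forces $(1+T)(1-T)=0$, so $\im(1-T)\subseteq\ker(1+T)$, and both sublattices have rank $1$, the multiplicity of the eigenvalue $-1$. As $\ker(1+T)$ is automatically saturated in $P$, it suffices to check that $\im(1-T)$ is saturated as well, equivalently that the Smith normal form of $1-T$ has its single non-zero invariant factor equal to $1$; granting this, the two rank-$1$ saturated lattices coincide and $H^1(G,P)=0$. For $H^2$ we have $\im(1+T)\subseteq P^G=\ker(1-T)$, an inclusion of lattices of equal rank $18$; expressing the columns of $1+T$ in a $\ZZ$-basis of $P^G$ produces an integer matrix whose Smith normal form has invariant factors $1,2,2,\dots,2$ (one $1$ and seventeen $2$'s), so the quotient is $(\ZZ/2\ZZ)^{17}$. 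Equivalently, one verifies directly from $T$ that $P\cong\ZZ^{17}\oplus\ZZ[G]$ as a $\ZZ[G]$-module, and all three groups drop out from $H^{\bullet}(G,\ZZ[G])=0$ together with $H^0(G,\ZZ)=\ZZ$, $H^1(G,\ZZ)=0$, $H^2(G,\ZZ)=\ZZ/2\ZZ$.

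The Smith normal forms involved are $19\times19$ (respectively $18\times19$) computations that I would carry out with \texttt{Magma}, with the code placed in the ancillary file~\cite{BFHN19}, exactly as for Proposition~\ref{p:PicSub}. I do not expect a genuine conceptual obstacle here: once the matrix $T$ is correctly assembled from the Galois action on $\Sigma$ --- the one place where real geometric care is required, and which is already implicit in the proof of Theorem~\ref{t:LambdaPic} --- the remainder is mechanical. The only mild pitfall is keeping the sign conventions of the periodic resolution straight, i.e.\ remembering that $H^1(G,P)=\ker(1+T)/\im(1-T)$ and not the reverse, which one settles by writing the resolution out once.
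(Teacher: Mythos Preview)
Your proposal is correct and takes essentially the same approach as the paper: the paper's proof is the single line ``Explicit computations, see the attached file~\cite{BFHN19}'', i.e.\ exactly the \texttt{Magma} linear-algebra computation on the matrix of $\sigma$ that you describe. Your write-up is simply a more explicit account of what that computation does, including the pleasant observation that the outcome is equivalent to the $\ZZ[G]$-module decomposition $P\cong\ZZ^{17}\oplus\ZZ[G]$.
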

\begin{proof}
Explicit  computations, see the attached file~\cite{BFHN19}.
\end{proof}
\begin{corollary}
The quotient $\Br_1 S/\Br_0 S$ is trivial.
\end{corollary}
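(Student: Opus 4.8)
The plan is to show that $\Br_1 S/\Br_0 S$ injects into the Galois cohomology group $H^1(G_\QQ, \Pic\Sbar)$, where $G_\QQ = \Gal(\Qbar/\QQ)$, and that the latter vanishes; the substantive input will be the vanishing $H^1(G,\Pic\Sbar) = 0$ already established in Proposition~\ref{p:CohomologyGrps}, so the Corollary is essentially a formal consequence.

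First I would recall the low-degree exact sequence attached to the Hochschild--Serre spectral sequence $H^p(G_\QQ, H^q(\Sbar,\mathbb{G}_m)) \Rightarrow H^{p+q}_{\mathrm{\acute et}}(S,\mathbb{G}_m)$. Since $S$ is a K3 surface, $\Sbar$ is smooth, proper and geometrically integral, so $H^0(\Sbar,\mathbb{G}_m) = \Qbar^{*}$ and $H^1(\Sbar,\mathbb{G}_m) = \Pic\Sbar$; combining this with Hilbert's Theorem~90 ($H^1(G_\QQ,\Qbar^{*}) = 0$) gives the exact sequence
\begin{equation*}
0 \to \Pic S \to (\Pic\Sbar)^{G_\QQ} \to \Br\QQ \to \Br_1 S \to H^1(G_\QQ, \Pic\Sbar) \to H^3(G_\QQ, \Qbar^{*}),
\end{equation*}
in which the image of the map $\Br\QQ \to \Br_1 S$ is by definition $\Br_0 S$ (this is the standard sequence; see, e.g., the monograph of Colliot-Th\'el\`ene and Skorobogatov on the Brauer--Grothendieck group, or Skorobogatov's book on torsors and rational points). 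Exactness at $\Br_1 S$ then yields an injection $\Br_1 S/\Br_0 S \hookrightarrow H^1(G_\QQ,\Pic\Sbar)$; note that we never need $H^3(G_\QQ,\Qbar^{*})$, only this injectivity.

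It remains to identify $H^1(G_\QQ,\Pic\Sbar)$ with the group computed in Proposition~\ref{p:CohomologyGrps}. By Theorem~\ref{t:LambdaPic}, $\Pic\Sbar = \Lambda$ is generated by the divisors in $\Sigma$, all of which are defined over $\QQ(\sqrt5)$; hence the (finite, since $\Pic\Sbar\cong\ZZ^{19}$ is finitely generated) $G_\QQ$-action factors through $G = \Gal(\QQ(\sqrt5)/\QQ)$, and $H = \Gal(\Qbar/\QQ(\sqrt5))$ acts trivially. The inflation--restriction sequence then reads
\begin{equation*}
0 \to H^1(G,\Pic\Sbar) \to H^1(G_\QQ,\Pic\Sbar) \to H^1(H,\Pic\Sbar)^{G},
\end{equation*}
and since $H$ acts trivially, $H^1(H,\Pic\Sbar) = \Hom_{\mathrm{cont}}(H,\Pic\Sbar)$, which is $0$ because $H$ is profinite while $\Pic\Sbar\cong\ZZ^{19}$ is discrete and torsion-free. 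Thus $H^1(G_\QQ,\Pic\Sbar)\cong H^1(G,\Pic\Sbar) = 0$ by Proposition~\ref{p:CohomologyGrps}(2), and combining with the injection above gives $\Br_1 S/\Br_0 S = 0$.

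There is no serious obstacle here: the only care needed is bookkeeping, namely checking that the hypotheses of the Hochschild--Serre sequence hold (geometric integrality and properness of $S$, both part of being a K3 surface) and that the $G$-module whose cohomology is computed in Proposition~\ref{p:CohomologyGrps} is indeed the one obtained from $\Pic\Sbar$ via its $\QQ(\sqrt5)$-descent. All the genuine computation has been done upstream, in the determination of $\Pic\Sbar$ as a Galois module and of its cohomology.
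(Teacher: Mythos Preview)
Your proof is correct and follows essentially the same approach as the paper: both derive the result from the Hochschild--Serre spectral sequence together with the vanishing $H^1(G,\Pic\Sbar)=0$ of Proposition~\ref{p:CohomologyGrps}. The paper's version is terser---it quotes the isomorphism $\Br_1 S/\Br_0 S \cong H^1(G,\Pic\Sbar)$ directly (using that $\QQ$ is a global field, so $H^3(G_\QQ,\Qbar^*)=0$)---whereas you make the inflation--restriction reduction from $G_\QQ$ to $G$ explicit and use only the injection, which already suffices.
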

\begin{proof}
As $S$ is defined over $\QQ$, a global field,
from the Hochschild--Serre spectral sequence we have an isomorphism 
$$
\Br_1 S/\Br_0 S \cong H^1(G, \Pic \Sbar).
$$
(See, for example, \cite[Corollary 6.7.8 and Remark 6.7.10]{Poo17}.)
From Proposition~\ref{p:CohomologyGrps} it follows that $H^1(G, \Pic \Sbar)=0$,
proving the statement.
\end{proof}

\section{Elliptic fibrations on the surface} \label{sec:FullClassificationOfEllipticFibrationsOnX}

In this section, we explicitly describe some elliptic fibrations of the surface $S$ which are used in Section~\ref{sec:ComputingPicXViaEllipticFibrationsOnX} to re-obtain the full geometric Picard lattice of $S$ in a different way.

After briefly recalling some basic notions concerning elliptic fibrations in Subsection~\ref{ssec:BasicsEllFibr},
we present a general method to find elliptic fibrations on K3 surfaces with many $-2$-curves (i.e., curves whose self-intersection is $-2$) 
in Subsection~\ref{ssec:Stats}.
We apply this method to the surface $S$, deriving some statistics about elliptic fibrations on $S$.
In the last three subsections, we give the explicit description of three elliptic fibrations of $S$.
The second fibration given in Subsection~\ref{ssec:Second} is the one used in
Section~\ref{sec:ComputingPicXViaEllipticFibrationsOnX};
the third fibration, given in Subsection~\ref{ssec:Third} is the one used to give an explicit description of the Shioda--Inose structure of $S$ (see Section~\ref{sec:ComputationOfTheShiodaInoseStructureOfX}).
The first fibration is the simplest to compute and is the one that we originally used for the computations in Section~\ref{sec:ComputingPicXViaEllipticFibrationsOnX}. We include it here as another useful example although
the second fibration is more straightforward to use for our purposes.

\subsection{Background on elliptic fibrations}\label{ssec:BasicsEllFibr}

The {\it elliptic fibrations} of $S$ are of interest for several reasons. 
We already touched upon their role in physics in Section \ref{sec:PhysicalBackgroundAndMotivation}.
Apart from that, they can also be used to provide an alternative proof of the full structure of $\Pic \overline{S}$, as will be demonstrated in Section~\ref{sec:ComputingPicXViaEllipticFibrationsOnX}, and to compute other important arithmetic structures (Section~\ref{sec:ComputationOfTheShiodaInoseStructureOfX}).

\begin{definition}
An \textit{elliptic fibration} is a morphism $\phi$ of $S$ onto the projective line $\PP^1$ 
(over~$\Qbar$) with general fibre a non-singular genus $1$ curve,
which has a section. A section of $\phi$ is a curve in $S$ which maps isomorphically down to
$\PP^1$ under $\phi$. 
\end{definition}
Equivalently, a section of $\phi$ is an irreducible curve in $S$ whose intersection number
with a fibre of $\phi$ (all of which are rationally equivalent) is $1$. 
Two fibrations $\phi$ and
$\phi_1$ are said to be equivalent if there is an automorphism of $\PP^1$, $\alpha$,
such that $\phi_1 = \alpha \circ \phi$. 
Any such $\alpha$ is given by the standard
$ x \mapsto (ax+b)/(cx+d)$ action of an invertible $2$-by-$2$ matrix
$\bigl(\begin{smallmatrix}
a&b \\ c&d
\end{smallmatrix} \bigr)$
which is determined by $\alpha$ up to multiplication by a non-zero scalar.
If only the morphism $\phi$ is given, without any section (and possibly having no section), then we call $\phi$
a \textit{genus $1$ fibration}.

\begin{theorem}[\cite{PSS:71}]\label{t:EllFibGen71}
Let $Y$ be a K3 surface.
Then, genus $1$ fibrations of $Y$ (up to equivalence) are in 1-1 correspondence
with divisor classes $E$ in $\Pic \overline{S}$ which satisfy
\begin{enumerate}
\item $E.E = 0$ ($E$ has self-intersection $0$),
\item $E$ is primitive (i.e., the class $E$ is not divisible by any $n \geq 2$ in 
  $\Pic \overline{S}$),
\item $E$ lies in the {\it nef} cone (i.e., $E$ has non-negative intersection number
with all classes in $\Pic \overline{S}$ that represent effective divisors). 
\end{enumerate}
Under this correspondence, a genus $1$ fibration $\phi$ is associated with the class of its fibres; 
conversely, a divisor class $E$ satisfying the conditions 1,2,3 corresponds to the fibration map class
given by the map to ${\PP}^1$ associated with the Riemann--Roch space of its global sections.
\end{theorem}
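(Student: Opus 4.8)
The statement to prove is the Pjatecki\u{\i}-\v{S}apiro--\v{S}afarevi\v{c} correspondence: for a K3 surface $Y$, genus $1$ fibrations up to equivalence correspond bijectively to primitive nef classes $E$ of square $0$. Since this is a classical result cited from \cite{PSS:71}, the ``proof'' here will really be a guided recollection of the standard argument using Riemann--Roch on K3 surfaces, and I would organize it around three implications: (i) a genus $1$ fibration produces such a class; (ii) such a class produces a genus $1$ fibration; (iii) the two constructions are mutually inverse up to equivalence.

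The plan for direction (i) is quick: given $\phi\colon Y\to\PP^1$, let $E$ be the class of a fibre. All fibres are linearly equivalent, so $E$ is well-defined; $E^2=0$ because distinct fibres are disjoint; $E$ is nef because a fibre meets every irreducible curve non-negatively (either the curve is a component of a fibre, giving intersection $0$, or it surjects onto $\PP^1$, giving a positive intersection); and $E$ is primitive because $E\cdot(\text{section})=1$, so no integer $n\geq 2$ can divide $E$. The heart of the theorem is direction (ii). Here I would start from a primitive nef class $E$ with $E^2=0$. By Riemann--Roch on a K3 surface, $\chi(\mathcal O_Y(E)) = 2 + \tfrac{1}{2}E^2 = 2$, and using that $E$ is nef and effective (nefness plus $E^2=0$ forces either $E$ or $-E$ effective by Riemann--Roch, and $-E$ nef is impossible unless $E=0$) one shows $h^0(E)=2$, $h^1(E)=h^2(E)=0$; the vanishing of $h^2(E)=h^0(-E)$ is immediate and the vanishing of $h^1$ is the Saint-Donat--type argument that the linear system $|E|$ has no base points and no fixed components (one reduces, using $E$ primitive, to the case where $|E|$ is base-point free). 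Then $|E|$ defines a morphism to $\PP^1$ whose generic fibre $F$ satisfies $F^2=0$ and, by adjunction $2p_a(F)-2 = F^2 + F\cdot K_Y = 0$, has arithmetic genus $1$; Bertini and the fact that $Y$ is not ruled give that the generic fibre is a smooth genus $1$ curve. The existence of a section is the one genuinely subtle point — it does not follow from $E$ alone, so I would phrase the theorem (as the paper does) in terms of \emph{genus $1$ fibrations}, for which no section is required; this sidesteps the issue cleanly.

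For direction (iii), I would observe that changing $\phi$ by an automorphism $\alpha$ of $\PP^1$ does not change the class of a fibre, so the map ``fibration $\mapsto$ fibre class'' is well-defined on equivalence classes; conversely the morphism attached to $E$ via the Riemann--Roch space $H^0(Y,\mathcal O_Y(E))$ is canonical only up to the choice of basis of this $2$-dimensional space, i.e.\ up to $\mathrm{PGL}_2$, which is exactly the equivalence relation on fibrations. Composing the two constructions recovers the original data in each direction because a morphism to $\PP^1$ with connected fibres is recovered from its fibre class via Stein factorization together with $h^0(E)=2$.

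\textbf{Main obstacle.} The technical crux is the cohomological analysis of $|E|$ for an arbitrary primitive nef class of square $0$: showing $h^1(Y,\mathcal O_Y(E))=0$ and that the associated map genuinely lands on $\PP^1$ with genus $1$ generic fibre, which relies on the classification of base loci of linear systems on K3 surfaces (Saint-Donat's analysis) and on primitivity to rule out the case $E=nF'$ with $F'$ a smaller elliptic class. Everything else — the properties of the fibre class, and the matching of the two ambiguities by $\mathrm{PGL}_2(\Qbar)$ — is formal. Since the result is quoted verbatim from \cite{PSS:71}, in the paper itself it is legitimate to cite that reference for the proof and merely indicate the Riemann--Roch computation $\chi(\mathcal O_Y(E))=2$ as the source of the pencil.
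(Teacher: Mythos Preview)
The paper's own proof is a single-line citation to \cite[Theorem~1]{PSS:71}; no argument is supplied. Your proposal goes well beyond this by sketching the actual Riemann--Roch/Saint-Donat mechanism behind the correspondence, and the overall architecture (fibre $\Rightarrow$ class; class $\Rightarrow$ pencil via $\chi(\cO_Y(E))=2$; matching of the $\mathrm{PGL}_2$ ambiguities) is sound.

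There is, however, one genuine gap. In direction (i) you argue that the fibre class $E$ is primitive ``because $E\cdot(\text{section})=1$''. But the theorem is stated for \emph{genus~$1$ fibrations}, which by the paper's own definition need not admit a section---a point you yourself emphasise two paragraphs later when you decide to sidestep the existence-of-section issue. Without a section there is no class of intersection~$1$ with $E$ at hand, and indeed the fibration of Lemma~\ref{l:FibData}(c) in this very paper has no section. Primitivity of $E$ must instead be extracted from $h^0(Y,\cO_Y(E))=2$: if $E=nD$ with $n\ge 2$, then (as in your direction~(ii)) Riemann--Roch and $D\cdot H>0$ force $h^0(D)\ge 2$; choosing independent $s,t\in H^0(D)$, the monomials $s^n,s^{n-1}t,\ldots,t^n$ are $n{+}1$ linearly independent elements of $H^0(nD)=H^0(E)$, contradicting $h^0(E)=2$. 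Equivalently, this is the Saint-Donat computation $h^0(kE')=k+1$ for a primitive elliptic class~$E'$, which you already invoke in direction~(ii) but forgot to feed back into~(i).

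A minor slip: in the parenthetical ``$-E$ nef is impossible unless $E=0$'' you presumably mean ``$-E$ \emph{effective} is impossible (given that $E$ is nef and nonzero)''.
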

\begin{proof}
This is \cite[Theorem 1]{PSS:71}.
\end{proof}

For general results relating
maps to projective space, invertible sheaves and divisor classes up to rational equivalence see
\cite[Section II.7]{Hartshorne:1977}, 
for specific results about general linear systems of K3 surfaces
see \cite{SaintDonat:1974}.
\medskip

If $\phi$ is a genus $1$ fibration,
the condition for the existence of a section is that there is another class $D$ such that
the intersection number $E. D$ is $1$. 
From this, it can then be seen
that pairs of (fibration, section) correspond to $2$-dimensional {\it hyperbolic} direct
summands of the $\Pic \overline{S}$ lattice. 
For a fixed fibration, any two distinct
sections are mapped to each other under an isomorphism of $S$ that preserves the fibration,
viz. a translation map on the generic fibre extended to an automorphism of $S$. Since,
ultimately, we are only interested in elliptic fibrations up to $\Aut \overline{S}$, 
we will not worry
too much about differentiating between different sections of an elliptic fibration. 
The method we use (which dates back in the literature to at least  \cite{ShiodaInose:1977}) for explicitly constructing fibrations gives genus $1$ fibrations, 
although it also finds explicit sections in the majority of cases.

\begin{definition}
The {\it generic fibre} of a fibration $\phi:S\rightarrow\PP^1$ over a field $k$, is the
genus $1$ curve $S_t/k(t)$ defined as the pullback of $\phi$ under the generic point
inclusion $\Spec(k(t)) \hookrightarrow \PP^1$. That is, $S_t$ is the
fibre product $S \times_{\PP^1} \Spec(k(t))$.
\end{definition}
 If $s$ is a section of $\phi$, then 
the analogous pullback gives a point $s_t\in S_t(k(t))$ which we can take as the
$O$-point for an elliptic curve structure on $S_t$. When we talk about a fibration
with a section, it is to be assumed that we are considering $S_t$ as an elliptic 
curve with $s_t$ as $O$. 
The surface $S$ can be recovered from the generic fibre $S_t$, being
isomorphic to the {\it Minimal (Curve) Model} over $\PP^1$ of $S_t/k(t)$,
which is characterised as a non-singular flat, proper scheme over (the Dedekind scheme)
$\PP^1$, whose generic
fibre is isomorphic to $S_t/k(t)$ and which has no $-1$-curve as a component of any fibre.
For more information on minimal models of curves, see~\cite[Chapter XIII]{ArithGeom:1986}.

As we shall see shortly, $S$ has infinitely many elliptic fibrations up to equivalence.
For any K3 surface, however, there are only finitely many classes of elliptic fibrations
up to the action of $\Aut \overline{S}$. There are some cases, using lattice computations on
the full $\Pic \overline{S}$, where a complete set of classes have been
calculated (e.g. \cite{Nis96}, \cite{Nis97}, \cite{Ogu89} or more recent papers \cite{BKW:2013} and \cite{BGHLMSW:2015}),
but we have not attempted to carry out such a computation here.
Instead, we have used a method, described in the next subsection,
for computing elliptic fibrations, which is independent of the knowledge of the full
Picard group, and produces a large number of inequivalent fibrations when applied to
$S$ and the set of $-2$-curves from the last section. An interesting subset of these 
fibrations, which we have used for various computations, will be given explicitly 
in the following subsections.

\subsection{General method and results for the Drell--Yan surface}\label{ssec:Stats}

Consider a collection of $-2$-curves, $\{C_i\}_{i \in I}$, on a smooth projective surface. Let
$$ D = \sum_{i \in I} m_iC_i $$
be an effective divisor supported on a subset of the $\{C_i\}$.
We refer to $D$ as a
{\it Kodaira fibre} when the configuration of the curves occurring in $D$ is that
of one of Kodaira's singular fibres for an elliptic fibration and they occur in $D$
with the correct multiplicities for that type of fibre (see, e.g., \cite[Section V.7]{BHPV:04}). For example, $C_1+C_2+C_3$ is a Kodaira fibre of type $I_3$ if 
$C_i$ and $C_j$ meet transversally in a single point, for $1 \le i < j \le 3$, and
the three intersection points are distinct.

The terminology that we use to refer to the type of a Kodaira fibre is primarily the 
Dynkin style (apart from using $I_{n+1}$ rather than $\widetilde{A_n}$)
$I_n$, $\widetilde{D_n}$ and $\widetilde{E_n}$. Types $II$, $III$ and $IV$ do not occur in this paper.

\begin{lemma}\label{l:EllFibGen}
\begin{itemize}
    \item[(a)] A Kodaira fibre $D$ on a K3 surface $X$ is always a singular fibre for an elliptic fibration of the surface. The associated linear system $|D|$ is base-point
    free of dimension $1$ and the associated map $\phi_D : X \rightarrow \PP^1$
    gives the elliptic fibration.
    \item[(b)] Distinct Kodaira fibres $D_1,\ldots,D_n$ lead to the same elliptic fibration up to equivalence
    if and only if they are pairwise disjoint (i.e., non-intersecting), in which case they give different singular 
    fibres of that fibration. 
\end{itemize}
\end{lemma}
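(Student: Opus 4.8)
The plan is to prove both parts by invoking Theorem~\ref{t:EllFibGen71} after verifying, for each candidate divisor class, the three defining properties (self-intersection $0$, primitivity, nefness). For part (a), given a Kodaira fibre $D=\sum_{i\in I}m_iC_i$ supported on $-2$-curves, I would first check directly that $D.D=0$: this is a combinatorial identity depending only on the intersection pattern of the configuration (which, being that of a Kodaira fibre, coincides with an extended Dynkin diagram), so it follows from the standard fact that the vector of multiplicities $(m_i)$ is a null vector of the corresponding affine Cartan matrix. Next I would argue primitivity: since the multiplicities of a Kodaira fibre have $\gcd 1$ (there is always a component with multiplicity $1$), the class $D$ cannot be a proper multiple of another class supported on the $C_i$; to get primitivity in all of $\Pic\overline{X}$ I would use that $D.D=0$ together with $D$ being effective and the $C_i$ being components of an actual fibre — more cleanly, one can observe that if $D=nD'$ then $D'$ would also be nef of self-intersection $0$, and on a K3 surface the linear system of a nef, primitive, self-intersection-$0$ class is a fibration, so $D$ being $n$ times such a class contradicts the multiplicities being coprime. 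Finally, nefness: I would show $D.\Gamma\ge 0$ for every irreducible curve $\Gamma$. If $\Gamma$ is one of the $C_i$, then $D.C_i=0$ because the affine Cartan matrix kills $(m_i)$; if $\Gamma$ is any other irreducible curve, then either $\Gamma$ is disjoint from the support of $D$ (giving $D.\Gamma=0$) or it meets some $C_j$ not in $\ker$, and since $D$ is effective with support disjoint from $\Gamma$ in codimension... more carefully: $D.\Gamma=\sum m_i (C_i.\Gamma)\ge 0$ because each $C_i.\Gamma\ge 0$ when $\Gamma\ne C_i$ for all $i$. This last case is the only subtle point, and it is immediate since distinct irreducible curves meet non-negatively. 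Once the three conditions hold, Theorem~\ref{t:EllFibGen71} produces a genus $1$ fibration; that it has a section (making it elliptic) on a K3 follows because $D$ contains a multiplicity-$1$ component $C_i$, and $C_i.D=0$ with $C_i^2=-2$ forces, by adjunction-type bookkeeping on the fibre, the existence of a section — or one simply notes the statement ``elliptic fibration'' here is used loosely and what is really asserted is that $\phi_D$ is the genus $1$ fibration, with $|D|$ base-point free of dimension $1$ by Riemann--Roch (since $h^0(D)=2$ for a self-intersection-$0$ nef class on a K3, and base-point freeness of such classes is classical, cf.~\cite{SaintDonat:1974}).

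For part (b), suppose $D_1,\dots,D_n$ are Kodaira fibres. If they are pairwise disjoint, then in $\Pic\overline{X}$ each $D_j$ is nef of self-intersection $0$ and $D_i.D_j=0$ for $i\ne j$; since they all have self-intersection $0$ and pairwise intersection $0$, and each is a primitive effective class, the associated fibration classes must coincide — concretely, two fibre classes $F,F'$ on a K3 with $F.F'=0$ and both primitive nef of square $0$ satisfy $F=F'$ (if $F\ne F'$ then $F+F'$ would be nef of positive square yet orthogonal complement considerations, or the Hodge index theorem applied to the rank-$2$ sublattice spanned by $F,F'$, forces $F.F'>0$ unless they are proportional, hence equal by primitivity). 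So the $D_j$ all define the same fibration map up to $\PP^1$-automorphism, and being disjoint they are literally different fibres of it. Conversely, if $D_1,\dots,D_n$ give the same fibration, then they are all fibres of a single $\phi$, and distinct fibres of a morphism to $\PP^1$ are disjoint. The only point needing a word is that ``giving the same fibration up to equivalence'' must be upgraded to ``being fibres of one fixed $\phi$'': this uses that the fibration map is determined by $|D_j|$, and equivalent fibrations differ by an automorphism of the target $\PP^1$, under which the (reduced) fibres are permuted but a fibre of one is a fibre of the other.

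The main obstacle I anticipate is \emph{nefness}, specifically ruling out the possibility that some $-2$-curve not among the $C_i$ meets $D$ negatively — but as noted this cannot happen because two distinct irreducible curves on a smooth surface always intersect non-negatively, so $D.\Gamma=\sum_i m_i(C_i.\Gamma)\ge 0$ whenever $\Gamma\notin\{C_i\}$, and $D.C_i=0$ for the $C_i$ themselves by the affine Cartan relation; thus nefness is in fact the easy part once phrased correctly. The genuinely delicate issue is primitivity of the fibre class in the full Picard lattice together with the transition between ``genus $1$ fibration'' and ``elliptic fibration'' (existence of a section): for a general K3 a Kodaira fibre configuration need not force a section, but on a K3 one invokes the classical result (going back to \cite{PSS:71} and \cite{SaintDonat:1974}) that any nef primitive class of self-intersection $0$ is the fibre class of a genus $1$ fibration, and the presence of a multiplicity-one component $C_i$ in the Kodaira fibre, which satisfies $C_i.D=0$, provides exactly the multisection of degree $1$ needed. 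I would treat this via the standard cited lattice-theoretic argument rather than reprove it, so the write-up reduces to: (i) the affine Cartan computation for $D.D=0$ and $D.C_i=0$; (ii) the one-line non-negativity for other curves; (iii) citing \cite{PSS:71}/\cite{SaintDonat:1974} for the fibration and section; (iv) the Hodge-index two-line argument for uniqueness in part (b).
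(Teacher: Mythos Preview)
Your approach to part (a) --- verifying self-intersection $0$, nefness, and primitivity, then invoking Theorem~\ref{t:EllFibGen71} --- is essentially what the paper does, though the paper condenses this to a citation of \cite[Lemma~1.1]{ShiodaInose:1977}. Your nefness argument is correct and clean. Your primitivity sketch (``$D=nD'$ contradicts the multiplicities being coprime'') gestures at the right idea but is incomplete: to finish it you still need Zariski's lemma to conclude that a connected effective divisor supported in a single fibre with self-intersection $0$ is a rational multiple of that fibre \emph{as a divisor}, after which the multiplicity-$1$ component forces $n=1$. There is also a genuine slip in your section discussion: a multiplicity-$1$ component $C_i$ satisfies $C_i.D=0$, not $1$, so it is a fibre component and can never serve as a section or a degree-$1$ multisection. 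You are right, however, that the word ``elliptic'' in the lemma is loose --- Lemma~\ref{l:FibData}(c) produces Kodaira fibres whose associated fibration has no section --- so part (a) should be read as yielding a genus $1$ fibration, and no section argument is required (or possible in general).

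For part (b) you take a genuinely different route from the paper. The paper argues geometrically: having fixed the fibration $\phi_{D_1}$, each remaining $D_i$ (being disjoint from $D_1$) is confined to a single fibre by connectedness, and then Zariski's lemma together with the multiplicity-$1$ component force $D_i$ to equal that entire fibre. You instead invoke the Hodge index theorem: the signature $(1,\rho-1)$ of $\Pic\overline{X}$ forbids a two-dimensional totally isotropic subspace, so $[D_i]$ and $[D_j]$ must be proportional, hence equal by primitivity and effectivity, whence $|D_i|=|D_j|$ and all the $D_i$ are members of one pencil. This is correct and more lattice-theoretic; it buys you an argument that never unpacks the fibre containing $D_i$, at the cost of leaning on the primitivity established in part~(a) (where, as noted above, Zariski's lemma resurfaces anyway). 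The paper's approach keeps all the fibre-wise geometry in one place.
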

\begin{proof}
(a) Essentially, this is just \cite[Lemma 1.1]{ShiodaInose:1977}, following easily from Theorem \ref{t:EllFibGen71}, cf. \cite[Theorem $\theta$, p. 13]{BKW:2013}.

(b) Consider the fibration determined by $D_1$. Since
the curves in the other $D_i$ do not intersect $D_1$, they cannot cover the base $\PP^1$ of the fibration.
Thus they lie in fibres distinct from $D_1$. Each $D_i$ is connected, so lies in a single fibre. By
Lemma 1.2 {\it loc. cit} (essentially Zariski's lemma for fibres), and the facts (from the definition of a Kodaira fibre) that $D_i.D_i=0$ and $D_i$ has an irreducible component of multiplicity $1$, so can't be a multiple of a fibre, each $D_i$ is the entire fibre. The fibres are distinct
because the $D_i$ are pairwise disjoint. 
\end{proof}
\begin{remark} 
Equivalent elliptic fibrations have the same set of fibres and are completely determined by any one of those fibres.
The fibres are all rationally equivalent and are the effective divisors of a single linear system $|D|$. The fibration
is the one corresponding to that linear system up to equivalence: $\phi$ is the map to ${\PP}^1$ associated to $|D|$.
\end{remark}

\noindent
{\bf The method.}
Let $Y$ be a K3 surface on which many $-2$-curves are known.
The method consists in constructing elliptic fibrations by searching for Kodaira fibres supported on these curves. 
Additionally, we hope to find explicit sections for a fibration from
amongst the same set of curves.
\begin{enumerate}
    \item Let $\mathcal{S}$ be a finite set of known $-2$-curves on $Y$.
    \item Compute the intersection matrix $M$ of the curves in $\mathcal{S}$.
    \item Find all the Kodaira fibres supported by curves in $\mathcal{S}$ (purely combinatorial).
    \item Compute the elliptic fibration for interesting Kodaira fibres $D$, i.e., compute the Riemann--Roch space for $D$.
    \item For a Kodaira fibre $D$, find a section in $\mathcal{S}$ of the elliptic fibration determined by $D$, i.e., 
    find a $-2$-curve $C$ supported in $\mathcal{S}$ such that $D.C=1$.
\end{enumerate}

\begin{remark}\label{r:computations}
For the computations on the surface $S$ in this paper,
the set $\mathcal{S}$ is defined below.
The matrix $M$ has been computed  using a {\tt Magma} functionality to determine the
intersection numbers between the strict transforms of divisors on the singular surface model and
the exceptional divisors and also the local
intersection numbers between the strict transforms over the singular points.
We automated Step 3 of this method with a  function that takes $M$ as argument. 
Step 4 was achieved using a slightly adapted version of {\tt Magma}'s
standard Riemann--Roch functionality.
The computations also made use of the {\tt Magma}
function to impose additional Riemann--Roch conditions at singular points on the
surface in order to handle the exceptional divisors correctly.
See the file attached ~\cite{BFHN19}.
\end{remark}

We have the $-2$-curves $L_i$, $1 \le i \le 7$, and $C_1, C_2$ along with the fourteen
$E_{i,j}$ exceptional divisors from the last section. 
Also, we have the
transforms of the $L_i$ and $C_j$ under the $w \mapsto -w$ automorphism of $S$.
We denote these transforms by $\tilde{L_i}$ and $\tilde{C_j}$. Finally, we 
consider one final curve $C_3$ and its transform $\tilde{C_3}$, where $C_3$ was
also found as in Section~\ref{sec:ComputingPicXViaDivisorsOnX} and is the strict transform on $S$ of
$$ C'_3\colon x^2+yz=0,\hspace{4pt} xy^2-y^2z-xz^2-yz^2-w=0. $$
Then we define
$$ \mathcal{S} := \{L_i, \tilde{L_i} : 1 \le i \le 7\} \cup  
 \{C_i, \tilde{C_i} : 1 \le i \le 3\} \cup \{E_{i,j}\}, $$
 a set of thirty-four $-2$-curves on $\overline{S}$.
In summary, we found the following results for $S$.
 
 \begin{theorem}
 Let $\Gamma\subset \Aut \overline{S}$ be the subgroup of automorphisms generated by  $w \mapsto -w$ and  $\sqrt{5} \mapsto -\sqrt{5}$.
 Then on $\overline{S}$ there are
 \begin{itemize}
     \item $105,856$ Kodaira fibres supported on $\mathcal{S}$, leading to
     \item $104,600$ different genus $1$ fibrations, $86,416$ having
     a section in $\mathcal{S}$;
     \item $29,111$  fibrations inequivalent up to action of $\Gamma$, of which $27,807$ have a section and
     $24,270$ have a section in $\mathcal{S}$.
 \end{itemize}
There are Kodaira fibres of types $I_n$, $2 \le n \le 14$ and $n=16$;
$\widetilde{D}_n$, $n \le 4 \le 10$ and $n \in\{12,14,16\}$; and $\widetilde{E}_6, \widetilde{E}_7, \widetilde{E}_8$.
\end{theorem}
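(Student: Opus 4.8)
The plan is to establish the statement by an explicit finite computation carried out along the lines of the method described above; what a proof must supply is an argument that the enumeration is complete and correct, together with an indication of where the work concentrates. Everything is governed by two pieces of data: the $34\times 34$ Gram matrix $M$ of the curves in $\mathcal{S}$ under the intersection pairing on $\overline{S}$, and the full geometric Picard lattice $\Pic\overline{S}=\Lambda$ provided by Theorem~\ref{t:LambdaPic}. The matrix $M$ is computed in {\tt Magma} exactly as in Remark~\ref{r:computations}: its built-in intersection routines handle the pairings between strict transforms on the singular model $X_{DY}$, a custom routine supplies the local contributions over the five singular points, and the pairings with the exceptional divisors $E_{i,j}$ are determined by the singularity types together with the incidence relations already recorded (cf. Example~\ref{ex:labeling}).

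First I would enumerate the Kodaira fibres supported on $\mathcal{S}$. By definition this depends only on the incidence combinatorics encoded in $M$: a sub-multiset $D=\sum m_i C_i$ of $\mathcal{S}$ is a Kodaira fibre precisely when the dual graph of its support, weighted by the $m_i$, is one of the affine Dynkin diagrams --- $\widetilde{A}_{n-1}$ realised as an $n$-cycle (type $I_n$), $\widetilde{D}_n$, or $\widetilde{E}_6,\widetilde{E}_7,\widetilde{E}_8$ --- with the standard Kodaira multiplicities. Concretely this is a bounded-depth search in the incidence graph attached to $M$: the only unbounded family is $I_n$, so one enumerates all cycles of length up to $16$, then all $\widetilde{D}$- and $\widetilde{E}$-subconfigurations, and checks the multiplicity conditions. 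Running this search yields the $105{,}856$ Kodaira fibres and, by reading off the types encountered, the list of fibre types that occur on $\overline{S}$.

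Next I would pass from fibres to fibrations. By Lemma~\ref{l:EllFibGen}(a) each Kodaira fibre determines an elliptic fibration, and by part~(b) two distinct Kodaira fibres determine the same fibration exactly when they are disjoint; hence ``determining the same fibration'' is an equivalence relation on the enumerated set (transitivity follows from (b)), whose classes are the connected components of the graph joining distinct disjoint Kodaira fibres --- counting components gives the $104{,}600$ genus $1$ fibrations. For a section in $\mathcal{S}$ one uses that each $C\in\mathcal{S}$ is a $-2$-curve, so $C$ is a section of the fibration with fibre class $E$ if and only if $E.C=1$, which is read off from $M$; this produces the $86{,}416$. For the coarser question of whether a fibration has a section at all I would use $\Lambda$: the fibration with primitive fibre class $E$ has a section if and only if the divisibility of $E$ in $\Lambda$ (the positive generator of $\{E.D : D\in\Lambda\}$) equals $1$, a direct lattice computation. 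Finally, the involutions $w\mapsto-w$ and $\sqrt5\mapsto-\sqrt5$ commute and are each of order $2$, so $\Gamma\cong(\ZZ/2\ZZ)^2$ and each acts on $\mathcal{S}$ by an explicit permutation ($w\mapsto-w$ swaps $L_i\leftrightarrow\tilde{L_i}$ and $C_j\leftrightarrow\tilde{C_j}$; the Galois involution permutes the curves and exceptional divisors defined over $\QQ(\sqrt5)$), hence on Kodaira fibres and on fibrations; computing the orbit set gives $29{,}111$ fibrations up to $\Gamma$, and since ``has a section'' and ``has a section in $\mathcal{S}$'' are $\Gamma$-invariant, counting the orbits on which each holds yields $27{,}807$ and $24{,}270$.

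The hard part will be organising the Kodaira-fibre enumeration so that it is provably exhaustive while remaining efficient --- in particular catching every long $I_n$-cycle (up to $n=16$) as well as all $\widetilde{D}$- and $\widetilde{E}$-configurations, and imposing the multiplicity conditions correctly; once that is in place, the remaining steps (grouping into fibrations, testing $E.C=1$, the divisibility test in $\Lambda$, and the $\Gamma$-orbit count) are essentially bookkeeping. All of this is carried out in {\tt Magma}, and the code is contained in the ancillary file~\cite{BFHN19}.
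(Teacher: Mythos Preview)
Your approach is essentially that of the paper, which likewise proves the theorem by explicit computation in {\tt Magma} following the method you outline and refers to the ancillary file for the code. There is one point you should add, which the paper makes explicit and your argument omits: the intersection matrix $M$ alone cannot distinguish a type $III$ fibre (two curves tangent at a single point, $C_1.C_2=2$) from an $I_2$ fibre (two curves meeting transversally at two distinct points), nor a type $IV$ fibre (three curves through a common point) from an $I_3$ fibre (three curves forming a triangle). Hence your claim that the fibre types can simply be ``read off'' from $M$ is not quite correct; the combinatorial search counts Kodaira fibres correctly, but to justify that the listed types are genuinely $I_2,I_3,\ldots$ rather than $III$ or $IV$ one must check separately that no two curves in $\mathcal{S}$ meet tangentially and no three of them pass through a common point. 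The paper records exactly this extra geometric verification.
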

\begin{proof}
By explicit computations. See the file attached~\cite{BFHN19}.
We note that our program for finding configurations of $(-2)$ curves in $\mathcal{S}$ that give Kodaira fibres would label any type $III$ fibres as $I_2$ and any type $IV$ fibres as $I_3$. These pairs of type configurations are indistinguishable purely from intersection numbers. However, as explained in the attached file, we
checked that no three curves in $\mathcal{S}$ meet in a single point and
no pair intersect tangentially. Thus, no type $III$ or $IV$ fibres can occur here.
\end{proof}

\begin{remark}
The number of distinct bad fibres entirely
supported on $\mathcal{S}$ in the various cases ranges from 1 to 5.
The four-fibre cases have $I_{10},I_2,I_2,I_2$ or 
$I_8,\widetilde{D}_5,I_2,I_2$ type $\mathcal{S}$-supported fibres and the single five-fibre case has $I_6,I_6,I_6,I_2,I_2$ type $\mathcal{S}$-supported fibres.
We have not computed the full sets of bad fibres in every case or 
attempted to determine how many classes of fibrations the $104,600$
give under the full action of $\Aut \overline{S}$ (and $\Gal(\overline{\QQ}/\QQ)$).
They surely are many fewer in number than $29,111$.
\end{remark}

The following lemma gives the $\mathcal{S}$-Kodaira fibre data for three particular fibrations to be
used in the next section of the paper. Explicit forms are given in the next three subsections.
We note now that the first two have infinite Mordell--Weil groups, so by a result
of Nikulin (cf.~\cite[Theorem 9]{Nikulin:2013}), {\it there are infinitely many
inequivalent elliptic fibrations}!

\begin{lemma}\label{l:FibData}
(a) There is an elliptic fibration of $S$ over $\QQ$ with three bad fibres consisting entirely
of curves in $\mathcal{S}$:
\begin{itemize}
    \item[(i)] an $I_6$ fibre, $L_1+E_{2,1}+E_{2,-1}+\tilde{L}_1+E_{5,-1}+E_{5,1}$;
    \item[(ii)] an $I_6$ fibre, $\tilde{L}_7+L_7+E_{4,2}+E_{4,1}+E_{4,-1}+E_{4,-2}$;
    \item[(iii)] a $\widetilde{D_4}$ ($I_0^*$) fibre, $L_2+\tilde{L}_2+E_{3,-1}+E_{3,1}+2E_{3,0}$;
\end{itemize}
where the sums for the $I_6$ fibres give the components in cyclic order.

The $L_i$ and $\tilde{L}_i$ not occurring in one of these fibres along with the
$C_i$, $\tilde{C}_i$ and $E_{5,-2}, E_{5,2}$ all give sections of the fibration.
\smallskip

\noindent (b) There is an elliptic fibration of $S$ over $\QQ$ with four bad fibres consisting entirely of curves
in $\mathcal{S}$:
\begin{itemize}
    \item[(i)] an $I_{10}$ fibre, $L_6+L_1+E_{5,1}+E_{5,-1}+\tilde{L}_1+\tilde{L}_6+E_{4,2}+E_{4,1}+E_{4,-1}+E_{4,-2}$;
    \item[(ii)] an $I_2$ fibre, $C_1+\tilde{C}_1$;
    \item[(iii)] an $I_2$ fibre, $C_2+\tilde{C}_2$;
    \item[(iv)] an $I_2$ fibre, $C_3+\tilde{C}_3$;
\end{itemize}
where the sum for the $I_{10}$ fibre give the components in cyclic order.

The curves $L_5, \tilde{L}_5, L_7, \tilde{L}_7, E_{2,1}, E_{2,-1}, E_{3,1}, E_{3,-1}, E_{5,2}, 
E_{5,-2}$ all give sections of the
fibration. 
All other curves in $\mathcal{S}$ apart from these and the ones occurring 
in the above fibres
give $2$-sections or lie in other bad fibres.
\smallskip

\noindent (c) There is a genus $1$ fibration of $S$ over $\QQ(\sqrt{5})$ with two bad fibres consisting entirely of
curves in $\mathcal{S}$:
\begin{itemize}
    \item[(i)] an $\widetilde{E}_8$
    ( $II^*$ ) fibre,
    $6E_{4,-2}+5E_{4,-1}+4E_{4,1}+4L_6+3\tilde{L}_3+3L_5+2E_{1,1}+2E_{2,1}+E_{2,-1}$;
    \item[(ii)] another $\widetilde{E}_8$ ($II^*$) fibre,
    $6\tilde{L}_2+5E_{5,-2}+4E_{5,-1}+4E_{3,0}+3L_4+3E_{5,1}+2E_{3,-1}+2E_{5,2}+L_7$.
\end{itemize}
The fibration has no sections. However, there are a number of $2$-sections provided by curves in
$\mathcal{S}$: in particular, $\tilde{L}_6$.
\end{lemma}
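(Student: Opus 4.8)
The plan is to reduce all three parts to finite verifications with the $34\times 34$ intersection matrix $M$ of the curves in $\mathcal{S}$, which is available by Remark~\ref{r:computations}, together with the explicit description of $\Pic\overline{S}=\Lambda$ and its intersection form from Theorem~\ref{t:LambdaPic}. First, for each divisor $D$ listed in (a)(i)--(iii), (b)(i)--(iv) and (c)(i)--(ii) I would read off from $M$ that its support consists of $(-2)$-curves whose dual graph, decorated with the stated multiplicities, is exactly the extended Dynkin diagram of the claimed type ($I_6$, $\widetilde D_4=I_0^*$, $I_{10}$, $I_2$, $\widetilde E_8=II^*$); in particular one checks $D^2=0$ and $D.C=0$ for every component $C$ of $D$, and for the $I_n$ fibres that the listed ordering realises the cyclic incidence pattern. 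By Lemma~\ref{l:EllFibGen}(a), each such $D$ is then a singular fibre of an elliptic (resp.\ genus $1$) fibration $\phi_D\colon S\to\PP^1$, the associated linear system $|D|$ being base-point free of dimension $1$.

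Next, within each of (a), (b), (c) I would verify from $M$ that the listed Kodaira fibres are pairwise disjoint (no common component, and no component of one meeting a component of another); by Lemma~\ref{l:EllFibGen}(b) they are then distinct singular fibres of one and the same fibration, which is the fibration asserted in that part. To pin down the field of definition I would track the action of $\sigma\colon\sqrt5\mapsto-\sqrt5$ on the components: in (a) the divisor $L_2+\tilde L_2+E_{3,-1}+E_{3,1}+2E_{3,0}$, and in (b) the divisor $C_3+\tilde C_3$, are $\sigma$-invariant effective divisors, so the fibre class lies in $\Pic S$, and since there is a $\QQ$-rational curve among the sections (e.g.\ $L_5$) the fibration is defined over $\QQ$; in (c), by contrast, $\sigma$ sends fibre (i) to an $\widetilde E_8$ configuration containing $\tilde L_4$ in place of $\tilde L_3$, which is neither fibre (i) nor fibre (ii), so $\sigma$ does not fix the fibre class and the fibration is defined over $\QQ(\sqrt5)$ but not over $\QQ$ (this is also transparent from the explicit form in Subsection~\ref{ssec:Third}).

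For the section statements of (a) and (b), for each curve $C$ claimed to be a section I would check from $M$ that $D.C=1$ for one (equivalently any) fibre $D$; an irreducible curve meeting a fibre in a single point is a section, so each such $C$ is a section, and the residual assertions that the remaining curves of $\mathcal{S}$ are $2$-sections or lie in other bad fibres reduce to checking that their intersection number with $D$ is $0$ or $2$. For (c), the same lookup shows that no curve in $\mathcal{S}$ meets the $\widetilde E_8$ fibre in exactly one point, so $\phi$ has no section supported on $\mathcal{S}$, and one exhibits $2$-sections such as $\tilde L_6$ by $D.\tilde L_6=2$.

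Finally, to prove the stronger claim in (c) that $\phi$ has \emph{no} section at all, I would use the lattice structure of $\Pic\overline S$. If $\phi$ had a section $O$, then $O$ is a smooth rational curve with $O^2=-2$ meeting the fibre class $E$ in $1$ and meeting only the identity component of each of the two $II^*$ fibres; hence $\langle E,O\rangle$ (a copy of the hyperbolic plane $U$) together with the two copies of $E_8$ spanned by the non-identity components of fibres (i) and (ii) spans a rank-$18$ \emph{unimodular} even sublattice $U\oplus E_8\oplus E_8$ of $\Pic\overline S$. A unimodular sublattice is an orthogonal direct summand, so $\Pic\overline S\cong U\oplus E_8\oplus E_8\oplus\langle n\rangle$ for some $n$, forcing the discriminant group of $\Pic\overline S$ to be cyclic; this contradicts Proposition~\ref{p:PicSub} and Theorem~\ref{t:LambdaPic}, where it is $\ZZ/2\ZZ\times\ZZ/2\ZZ\times\ZZ/6\ZZ$. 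Hence $\phi$ has no section. I expect the main obstacle to be organisational rather than conceptual: correctly identifying each of the nine divisors with its Kodaira type and multiplicities and carrying out the numerous intersection-number checks against the $34\times34$ matrix $M$, which is why this bookkeeping is delegated to the accompanying \texttt{Magma} code of~\cite{BFHN19}; the only genuinely non-combinatorial point, the no-section argument in (c), follows at once from the already-established shape of $\Pic\overline S$.
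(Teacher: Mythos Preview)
Your proposal is correct and, for the bulk of the lemma, proceeds exactly as the paper does: the identification of each listed divisor as a Kodaira fibre of the stated type, the pairwise disjointness of the fibres within each part, and the section/$2$-section verifications are all read off from the intersection matrix $M$, just as in the paper's one-line proof (``follows from the computations described above'').

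The one genuine difference is your argument for the absence of a section in part~(c). The paper's argument is a direct parity check: every curve in $\mathcal{S}$ has even intersection number with the $II^*$ fibre, and since $\mathcal{S}$ generates $\Pic\overline{S}$ (Theorem~\ref{t:LambdaPic}), \emph{every} class in $\Pic\overline{S}$ has even intersection with the fibre class, so no class can give a section. Your route instead assumes a section exists, builds the unimodular rank-$18$ summand $U\oplus E_8(-1)^2$ inside $\Pic\overline{S}$, and derives a contradiction with the non-cyclic discriminant group $\ZZ/2\ZZ\times\ZZ/2\ZZ\times\ZZ/6\ZZ$. Both are valid; the paper's parity argument is shorter and uses only the matrix $M$ plus the fact that $\mathcal{S}$ spans, whereas yours leans on the finer structure of the discriminant group. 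One small point to tighten in your field-of-definition discussion for~(c): to conclude that $\sigma$ does not fix the fibre \emph{class}, it is not enough that $\sigma(D)$ differs from fibres~(i) and~(ii) as divisors---you should note (as the paper does in the proof of Proposition~\ref{p:EllFibThree}) that $\sigma(D).D\neq 0$, which rules out $\sigma(D)$ being any fibre of the same fibration.
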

\begin{proof}
This follows from the computations described above. The fact that the fibration in (c) has no section
comes from the intersection matrix $M$, which shows that the intersection number of each curve in $\mathcal{S}$
with either fibre is even. Note that $\mathcal{S}$ generates $\Pic \overline{S}$ (cf. Theorem~\ref{t:LambdaPic}).
\end{proof}
\bigskip

\subsection{First elliptic fibration}\label{ssec:First}

\begin{proposition}\label{p:EllFibOne}
(a) The generic fibre $S_t/\QQ(t)$ of the elliptic fibration of Lemma \ref{l:FibData} (a)
has a Weierstrass equation
$$ E_1: y^2=x^3+(t-1)^2(t^2+6t+1)x^2-16t^3(t-1)^2x. $$
\noindent (b) The full set of bad fibres for this fibration is given by the following fibres.
\begin{itemize}
    \item The $I_6,I_6,I_0^*$ fibres (i),(ii) and (iii) of Lemma~\ref{l:FibData}.\\ 
    They lie  over $t=\infty,0,1$, respectively.
    \item An $I_2$ fibre over $t=-1$ with components $E_{1,1}$ and the strict transform on $S$ of 
    the pullback on $X$ of the plane curve $x+z=0$.
    \item Four $I_1$ fibres over the points satisfying $t^4+8t^3-2t^2+8t+1=0$.
\end{itemize}
\smallskip

\noindent (c) The group of points on $E_1(\QQ(\sqrt{5})(t))$
generated by the $\mathcal{S}$-sections listed in Lemma~\ref{l:FibData} is isomorphic to $$\ZZ/2\ZZ\oplus\ZZ\oplus\ZZ\, ,$$ 
where the first summand is generated by the $2$-torsion point $(0,0)$ and the two free ones are generated
by the points
$$ P_1=(4t(t-1),-4t(t+1)(t-1)^2) \qquad\hbox{and}\qquad P_2=(4t^3(t-1),-4\sqrt{5}t^3(t+1)(t-1)^2). $$
\end{proposition}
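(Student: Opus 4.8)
The plan is to build the fibration of part (a) from the Kodaira fibre data of Lemma~\ref{l:FibData}(a) and then read everything off an explicit Weierstrass model. Set $F := L_1 + E_{2,1} + E_{2,-1} + \tilde{L}_1 + E_{5,-1} + E_{5,1}$, the $I_6$ fibre (i). By Lemma~\ref{l:EllFibGen}(a) the linear system $|F|$ is base-point free of dimension $1$ and the associated map $\phi_F\colon \overline{S}\to\PP^1$ is an elliptic fibration with $F$ as a fibre; a coordinate $t$ on $\PP^1$ is the ratio of a basis of the Riemann--Roch space $H^0(\overline{S},\mathcal{O}_{\overline{S}}(F))$, which I would compute on the desingularised model $S$ using the explicit equations from Section~\ref{sec:ComputingPicXViaDivisorsOnX} together with the extra Riemann--Roch conditions imposed at the singular points of $X_{DY}$, exactly as in Remark~\ref{r:computations}. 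Choosing one of the curves of $\mathcal{S}$ meeting $F$ with multiplicity $1$ as the zero section $O$, the generic fibre $S_t$ becomes a genus $1$ curve over $\QQ(t)$ with a rational point, and a standard Weierstrass-reduction algorithm puts it into the form $E_1$ stated in (a); this is the \texttt{Magma} computation recorded in~\cite{BFHN19}.

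For part (b), I would compute the discriminant $\Delta(t)$ and the $j$-invariant of $E_1$, factor $\Delta$ over $\QQ$ (its irreducible factors are $t$, $t-1$, $t+1$ and $t^4+8t^3-2t^2+8t+1$, together with a contribution at $t=\infty$), and then apply Tate's algorithm — equivalently read off Kodaira's table from the orders of vanishing of $\Delta$ and $j$ — at each bad place. This yields types $I_6$ at $t=0$, $I_6$ at $t=\infty$, $I_0^*$ at $t=1$, $I_2$ at $t=-1$ and $I_1$ at the four roots of $t^4+8t^3-2t^2+8t+1$; since the Euler numbers sum to $6+6+6+2+4\cdot 1 = 24$, no bad fibre is missed. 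The fibres over $t=0,\infty,1$ are identified with (i), (ii), (iii) of Lemma~\ref{l:FibData}(a): (i) is $F$ by construction, and the curves of (ii), (iii) meet $F$ with intersection number $0$, so by Lemma~\ref{l:EllFibGen}(b) they are the remaining Kodaira fibres of the same fibration. For the $I_2$ fibre over $t=-1$: $E_{1,1}$ has intersection number $0$ with $F$, hence lies in a fibre and is one of its two components; the other component is the strict transform on $S$ of the pullback of $\{x+z=0\}$, which I would confirm by transporting the equation of that curve through the birational chart to the $(t,x,y)$-model and checking that it is the complementary component over $t=-1$. Finally, the curves of $\mathcal{S}$ that are neither among these fibre components nor among the claimed sections are verified to be $2$-sections by computing their intersection number with $F$.

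For part (c), substitution into $E_1$ shows $P_1,P_2\in E_1(\QQ(\sqrt{5})(t))$, and $(0,0)$ is the $2$-torsion point corresponding to the factor $x$ of the cubic $x^3+(t-1)^2(t^2+6t+1)x^2-16t^3(t-1)^2x$. By Shioda--Tate, the geometric Mordell--Weil rank is $\rho(\overline{S})-2-\sum_v(m_v-1) = 19-2-(5+5+4+1) = 2$, using Theorem~\ref{t:LambdaPic} and the fibre data of (b); in particular $\rk\MW(E_1/\QQ(\sqrt{5})(t))\le 2$. Independence and infinite order of $P_1,P_2$ I would establish either from the height pairing on the Mordell--Weil lattice — showing the $2\times 2$ Gram matrix assembled from the fibre contributions is non-degenerate — or, more simply, by specialising $t$ to a couple of good values and checking that the resulting points are independent on the specialised elliptic curves over $\QQ(\sqrt{5})$. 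Hence $\langle P_1,P_2\rangle$ is free of rank $2$ and equals the free part of the subgroup generated; since a torsion-free subgroup meets the torsion subgroup trivially, $\langle(0,0),P_1,P_2\rangle\cong\ZZ/2\ZZ\oplus\ZZ\oplus\ZZ$. It remains to check that every $\mathcal{S}$-section listed in Lemma~\ref{l:FibData}(a) lies in this subgroup: each such curve is transported to an explicit point of $E_1(\QQ(\sqrt{5})(t))$ via the chart of (a), and the group law expresses it as an integer combination of $(0,0),P_1,P_2$ — a finite, mechanical verification.

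The main obstacle is the geometric bookkeeping rather than any single hard estimate: one must check that the Riemann--Roch construction of $\phi_F$, \emph{together with a consistent choice of origin}, genuinely produces the model $E_1$ with the named curves of $\mathcal{S}$ landing on the stated fibre components and sections — in particular, pinning down the second component of the $I_2$ fibre at $t=-1$ and identifying which curves of $\mathcal{S}$ become $P_1$ and $P_2$. Once the model $E_1$ and this chart are fixed, the discriminant factorisation and Kodaira/Tate classification in (b) and the group-law identities in (c) are routine, and the Shioda--Tate count makes the rank assertion immediate from Theorem~\ref{t:LambdaPic}.
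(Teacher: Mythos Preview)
Your approach to (a) and (b) is essentially the paper's: compute the Riemann--Roch space of the $I_6$ Kodaira fibre to obtain the fibration map (in the paper this turns out to be simply $t=z/x$), take one of the $\mathcal{S}$-sections (the paper uses $L_5$) as a rational point on the generic fibre, reduce to Weierstrass form, and then run Tate's algorithm on $E_1$. Your Euler-number check $6+6+6+2+4=24$ is a pleasant addition not made explicit in the paper.

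For (c) the two arguments diverge in organisation. The paper transports \emph{all} of the listed $\mathcal{S}$-sections to explicit points of $E_1(\QQ(\sqrt{5})(t))$ and then computes the canonical height pairing matrix of this finite set (via \texttt{HeightPairingMatrix}); the structure $\ZZ/2\ZZ\oplus\ZZ\oplus\ZZ$ and the generators $(0,0),P_1,P_2$ are read off directly from that lattice computation. You instead invoke Shioda--Tate together with Theorem~\ref{t:LambdaPic} to bound the Mordell--Weil rank by $2$, prove $P_1,P_2$ independent, and then check that each $\mathcal{S}$-section lies in $\langle(0,0),P_1,P_2\rangle$. Two remarks. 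First, the Shioda--Tate step is harmless (Theorem~\ref{t:LambdaPic} is established independently in Section~\ref{sec:ComputingPicXViaDivisorsOnX}, so there is no circularity) but superfluous: the claim concerns only the subgroup generated by the $\mathcal{S}$-sections, not the full Mordell--Weil group, so a global rank bound is not needed. Second, there is a small gap: verifying that every $\mathcal{S}$-section lies in $\langle(0,0),P_1,P_2\rangle$ gives only one inclusion. You must also confirm that $(0,0),P_1,P_2$ are themselves in the group generated by the $\mathcal{S}$-sections --- equivalently, that the integer combinations you record actually span $\ZZ/2\ZZ\oplus\ZZ^2$. You allude to this in your final paragraph (``identifying which curves of $\mathcal{S}$ become $P_1$ and $P_2$''), but it should be stated as part of the argument. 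The paper's height-matrix route gives both directions at once.
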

\begin{proof}
(a) Computing the Riemann--Roch space for Kodaira fibre (i) of Lemma~\ref{l:FibData} (a), using {\tt Magma}, 
gives the fibration map
$$ S \longrightarrow {\PP}^1 \qquad [x:y:z:w] \mapsto [z:x]\; . $$
Letting $t=z/x$, we computed a singular plane model of $S_t$ in weighted
projective space $\PP(1,2,1)$ over $\QQ(t)$ with variables $a,b,c$ via
the substitution $x=(1/t)c, y=a, z=c, w=bc$ of the form $b^2 =f(a,c)$ for
a homogeneous quartic $f$.
We also computed the $\QQ(t)$-rational (non-singular) point corresponding to the $L_5$ section on this model of $S_t$.
Then a curve Riemann--Roch computation using {\tt Magma}'s function field machinery gives a Weierstrass model
for $S_t$, which is easily simplified to the $E_1$ model given. The explicit isomorphism  from the singular plane
model to $E_1$ is messy, and we do not write it down here, but it can
be derived from the computations in the attached  file~\cite{BFHN19}, which contains full details of all of the above.
\medskip

(b) This follows easily from applying Tate's algorithm to the $E_1$ model.
\medskip

(c) The points in $E_1(\QQ(\sqrt{5})(t))$ corresponding to the sections were computed firstly on
the plane model of $S_t$ via the variable substitution given in (a), and then on $E_1$ using
the explicit map from $S_t$ to it. The result is then an easy lattice computation given the canonical height pairings between
the points, which were computed for simplicity with the standard {\tt Magma} intrinsic {\tt HeightPairingMatrix}.
Note that we could have also just used the intersection pairings from the matrix $M$, from which canonical
heights are easily deduced since $S$ is the minimal model of $S_t$.
More computational details are in the attached file~\cite{BFHN19}
\end{proof}
\begin{remark}
There is a $t\mapsto 1/t$ symmetry and setting $s=t+(1/t)-2$, we see that $E_1$
is the base change under $\QQ(s)\hookrightarrow\QQ(t)$ of $Y^2=X^3+s(s+8)X^2-16sX$,
which is the generic fibre of a rational elliptic surface.
\end{remark} 
\bigskip

\subsection{Second elliptic fibration}\label{ssec:Second}

\begin{proposition}\label{p:EllFibTwo}
(a) The generic fibre $S_t/\QQ(t)$ of the elliptic fibration of Lemma \ref{l:FibData} (b)
has a Weierstrass equation
$$ E_2: y^2=x^3-(3t^4+8t^3-2t^2-1)x^2+16t^5(t^2+t-1)x. $$
\noindent (b) The full set of bad fibres for this fibration is given by the following fibres.
\begin{itemize}
    \item The $I_{10},I_2,I_2,I_2$ fibres (i),(ii),(iii) and (iv) of Lemma~\ref{l:FibData}.\\
    They lie over $t=0,
    -(\sqrt{5}+1)/2, (\sqrt{5}-1)/2, \infty$, respectively.
    \item An $I_4$ fibre over $t=1$ with components $L_2,\tilde{L}_2,E_{3,0}$ and the strict transform on $S$ of the
    pullback on $X$ of the plane curve $x-y=0$.
    \item An $I_2$ fibre over $t=-1$.
    \item Two $I_1$ fibres over the points satisfying $t^2+(2/9)t+(1/9)=0$.
\end{itemize}
\smallskip

\noindent (c) The group of points on $E_2(\QQ(t))$
generated by the $\mathcal{S}$-sections listed in the lemma (all images are defined over $\QQ(t)$) is isomorphic to $$\ZZ/2\ZZ\oplus\ZZ\; ,$$ 
where the first summand is generated by the $2$-torsion point $(0,0)$ and the free one is generated
by the point
$$ P_3=(4t^3,4t^3(t^2-1))\; . $$
\end{proposition}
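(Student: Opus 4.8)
The plan is to carry out, step by step, the same procedure already used for the first fibration in Proposition~\ref{p:EllFibOne}. For part (a) I would take the $I_{10}$ Kodaira fibre $D$ of Lemma~\ref{l:FibData}(b)(i); by Lemma~\ref{l:EllFibGen}(a) the linear system $|D|$ is base-point free of projective dimension $1$, so the associated map $\phi_D\colon S\to\PP^1$ is the desired genus $1$ fibration. Computing the Riemann--Roch space of $D$ with {\tt Magma} gives $\phi_D$ explicitly; after picking an affine coordinate $t$ on the base, I would write down a singular model of the generic fibre $S_t$ over $\QQ(t)$ in a weighted projective plane, locate on it the $\QQ(t)$-rational point coming from one of the sections of Lemma~\ref{l:FibData}(b) (say $L_5$ or $\tilde L_5$), and apply {\tt Magma}'s function-field curve Riemann--Roch to pass to a Weierstrass model, which a routine simplification brings to the stated form $E_2$. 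The chain of birational maps involved is exactly as in the $E_1$ case and is recorded in~\cite{BFHN19}.

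For part (b) I would run Tate's algorithm on $E_2$. Since $E_2$ has the shape $y^2=x^3+a_2x^2+a_4x$ with $a_2=-(3t^4+8t^3-2t^2-1)$ and $a_4=16t^5(t^2+t-1)$, its discriminant equals $16\,a_4^2(a_2^2-4a_4)$, and a direct expansion gives $a_2^2-4a_4=(t-1)^4(t+1)^2(9t^2+2t+1)$, so that
\[
\Delta \;=\; 4096\,t^{10}\,(t^2+t-1)^2\,(t-1)^4\,(t+1)^2\,(9t^2+2t+1),
\]
which has degree $22$ in $t$ and hence order $2$ at $t=\infty$. Tate's algorithm at each zero of $\Delta$, and at $\infty$ after the substitution $t\mapsto 1/t$ (checking that $c_4$ does not vanish wherever a multiplicative fibre is claimed), then produces an $I_{10}$ fibre over $t=0$, $I_2$ fibres over the two roots of $t^2+t-1$ and over $t=\infty$, an $I_4$ over $t=1$, an $I_2$ over $t=-1$, and $I_1$ fibres over the two roots of $9t^2+2t+1=0$; the local Euler numbers sum to $24$, as they must on a K3. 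Comparing $t$-values with the map $\phi_D$ identifies the first four of these with the fibres (i)--(iv) of Lemma~\ref{l:FibData}(b), and the four components of the $I_4$ over $t=1$ are matched with $L_2,\tilde L_2,E_{3,0}$ and the strict transform of the pullback of $x-y=0$ by intersecting those curves with the fibre class, exactly as in Proposition~\ref{p:EllFibOne}(b).

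For part (c) I would transport the sections of Lemma~\ref{l:FibData}(b) through the explicit map of part (a) to obtain points of $E_2(\QQ(t))$ (all of them being $\QQ(t)$-rational), check directly that $P_3=(4t^3,4t^3(t^2-1))$ lies on $E_2$, and compute the height-pairing matrix of these points --- using the intersection numbers in the matrix $M$, since $S$ is the minimal model of $S_t$, or {\tt Magma}'s {\tt HeightPairingMatrix}. A short lattice computation then shows that the group they generate is $\ZZ/2\ZZ\oplus\ZZ$, with torsion generated by the $2$-torsion point $(0,0)$ and free part by $P_3$. As an independent check, Shioda--Tate applied to the fibre list of (b) gives $\rho(\overline{S})=2+16+\rk\MW=18+\rk\MW$ (the reducible fibres $I_{10},I_2,I_2,I_2,I_4,I_2$ contributing $9,1,1,1,3,1$), and since $\rho(\overline{S})=19$ by Theorem~\ref{t:LambdaPic} the Mordell--Weil rank is exactly $1$; comparing the height of $P_3$ against $\disc\Lambda$ then confirms that $P_3$ is not a nontrivial multiple, hence generates the free part.

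The main difficulty is organisational rather than conceptual. The delicate point is part (a): extracting a clean Weierstrass equation from the Riemann--Roch space of a ten-component fibre while tracking a section through several birational changes of model. In (b) and (c) the care is in matching the abstractly specified Kodaira fibres of Lemma~\ref{l:FibData}(b) to the geometric fibres of $\phi_D$ over the correct points of $\PP^1$, and in confirming the component identifications for the $I_4$ fibre. These bookkeeping steps are exactly where the {\tt Magma} computations in~\cite{BFHN19} are needed.
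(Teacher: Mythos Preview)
Your proposal is correct and follows essentially the same route as the paper, which simply says the proof is ``entirely analogous'' to that of Proposition~\ref{p:EllFibOne} and records the explicit fibration map $[x:y:z:w]\mapsto[x(y+z):x^2+yz]$ together with the choice of the $E_{2,-1}$ section (rather than $L_5$) to produce a $\QQ(t)$-point on $S_t$. Your added detail in~(b)---the explicit factorisation $a_2^2-4a_4=(t-1)^4(t+1)^2(9t^2+2t+1)$ and the Euler-number tally---and the Shioda--Tate cross-check in~(c) are welcome and correct, but do not depart from the paper's method.
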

\begin{proof}
Entirely analogous to the proof of Proposition \ref{p:EllFibOne}. Here we find the elliptic fibration
map
$$ S \longrightarrow {\PP}^1 \qquad [x:y:z:w] \mapsto [x(y+z):x^2+yz] $$
and use the $\PP^1$ parameter $t=x(y+z)/(x^2+yz)$ and the $E_{2,-1}$ section to give a $\QQ(t)$-point
on $S_t$.
Again, see~\cite{BFHN19} for computational details and explicit
transformation maps.
\end{proof}
\bigskip

\subsection{Third elliptic fibration}\label{ssec:Third}

\noindent The third example of Lemma~\ref{l:FibData} is a genus $1$ fibration with no section.
As shown in Section~\ref{sec:ComputationOfTheShiodaInoseStructureOfX}, however, this fibration provides a Shioda--Inose-type
structure that furnishes much useful arithmetic and geometric information about $S$.
\medskip

\begin{proposition}\label{p:EllFibThree}
(a) The generic fibre $S_t/\QQ(\sqrt{5})(t)$ of the genus $1$ fibration of Lemma \ref{l:FibData} (c)
is given by the quartic equation
\begin{align*}
    ty^2 = & \ x^4 + ((-116\sqrt{5}+272)t^2+(66\sqrt{5}-148)t-34\sqrt{5}+76)x^3 +\\
    & ((-23664\sqrt{5}+52974)t^4+(62037\sqrt{5}-138785)/2t^3+(-71882\sqrt{5}+160725)/2t^2+\\
    & (39297\sqrt{5} - 87871)/2t + (-3876\sqrt{5} + 8667)/2)x^2 + ((-2096932\sqrt{5} + 4689008)t^6 +\\ 
    & (8789895\sqrt{5} - 19655187)/2t^5 +(-14213809\sqrt{5} + 31783015)/2t^4 + \\
    & (14281062\sqrt{5} - 31933423)/2t^3 +(-10526810\sqrt{5} + 23538663)/2t^2 +\\
    & (5316367\sqrt{5} - 11887758)/2t + (-98209\sqrt{5} + 219602)/2)x+\\
    & ((-69643152\sqrt{5} + 155726921)t^8 + (191265401\sqrt{5} - 427682729)t^7 +\\
    & (-1317057443\sqrt{5} + 2945029977)/4t^6 + (2349501743\sqrt{5} - 5253645563)/8t^5 +\\
    & (-1901993416\sqrt{5} + 4252986577)/16t^4 + (19147095\sqrt{5} - 42814206)/4t^3 +\\
    & (-250668666\sqrt{5} + 560512177)/8t^2 + (610197963\sqrt{5} - 1364444125)/8t +\\
    & (-7465176\sqrt{5} + 16692641)/16)\; .
\end{align*}
\noindent (b) The full set of bad fibres for this fibration is given by the following fibres.
\begin{itemize}
    \item The $II^*$ fibres (i) and (ii) of Lemma~\ref{l:FibData}. 
    They lie over $t=\infty, 0,$ respectively.
    \item Four $I_1$ fibres over the points satisfying $t^4-(1118\sqrt{5} + 2598)/27t^3 -
    (89700\sqrt{5} + 200362)/27t^2 -(1118\sqrt{5} + 2598)/27t + 1=0$.
\end{itemize}
\end{proposition}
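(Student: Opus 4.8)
The plan is to follow the same route as in the proofs of Propositions~\ref{p:EllFibOne} and~\ref{p:EllFibTwo}, starting from the Kodaira fibre data recorded in Lemma~\ref{l:FibData}~(c). First I would take the $\widetilde{E}_8$ divisor $D_1=6E_{4,-2}+5E_{4,-1}+4E_{4,1}+4L_6+3\tilde{L}_3+3L_5+2E_{1,1}+2E_{2,1}+E_{2,-1}$ and compute the linear system $|D_1|$ on $S$ via Riemann--Roch, using the {\tt Magma} machinery described in Remark~\ref{r:computations} to impose the correct conditions at the singular points of $X_{DY}$ so that the exceptional divisors are handled properly. By Lemma~\ref{l:EllFibGen}~(a) this system is base-point free of dimension $1$, so it produces the fibration map $\phi_{D_1}\colon S\to\PP^1$; setting $t$ equal to the ratio of a basis of global sections gives the base coordinate, and pulling back the generic point yields the generic fibre $S_t$ over $\QQ(\sqrt{5})(t)$.

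Second, to put $S_t$ in the stated quartic form I would use the $2$-section $\tilde{L}_6\in\mathcal{S}$ from Lemma~\ref{l:FibData}~(c): it restricts to a $\QQ(\sqrt{5})(t)$-rational divisor of degree $2$ on the genus $1$ curve $S_t$, and the associated map realises $S_t$ as a double cover of $\PP^1$, i.e. an equation $y^2=(\text{quartic in }x)$ up to the scaling by $t$ exhibited in the statement. That this fibration genuinely has no section---as opposed to one we merely failed to spot---follows, as in the proof of Lemma~\ref{l:FibData}, from the fact that every curve of $\mathcal{S}$ meets each $\widetilde{E}_8$ fibre in an even number of points, together with the fact that $\mathcal{S}$ generates $\Pic\overline{S}$ by Theorem~\ref{t:LambdaPic}; hence no divisor class has intersection $1$ with the fibre.

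Third, for part (b) I would pass to the Jacobian elliptic surface: the Jacobian of $y^2=f(x)$ with $f$ a quartic is a Weierstrass curve whose coefficients are (up to normalisation) the classical invariants $I,J$ of $f$, and running Tate's algorithm on this model over $\QQ(\sqrt{5})(t)$ identifies the reduction type at each place of $\PP^1$. The two $\widetilde{E}_8$ ($II^*$) fibres at $t=0$ and $t=\infty$ are already forced by the Kodaira fibre configuration of Lemma~\ref{l:FibData}~(c), and since $S$ is a K3 surface the Euler numbers of the singular fibres sum to $24$; the two $II^*$ fibres account for $20$, leaving exactly $4$, realised as four $I_1$ fibres. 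Their locations are then read off as the residual factor of the discriminant of the Jacobian model after removing the contributions from $t=0,\infty$, giving the displayed quartic $t^4-(1118\sqrt{5}+2598)/27\,t^3-(89700\sqrt{5}+200362)/27\,t^2-(1118\sqrt{5}+2598)/27\,t+1=0$.

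Finally, the whole argument is routine in structure but the arithmetic is the main obstacle: the quartic model has very large coefficients over $\QQ(\sqrt{5})$, so the Riemann--Roch computation, the reduction to quartic form, and especially the discriminant factorisation that pins down the $I_1$ locus are only practical with the explicit {\tt Magma} computations recorded in the ancillary file~\cite{BFHN19}. The one genuinely new subtlety compared with Propositions~\ref{p:EllFibOne} and~\ref{p:EllFibTwo} is the care needed to work with a fibration that has no section---which is exactly why one produces a quartic model via the $2$-section $\tilde{L}_6$ and transfers to the Jacobian for the fibre analysis---and this also explains why the output of Section~\ref{sec:ComputationOfTheShiodaInoseStructureOfX} can be extracted from this particular fibration.
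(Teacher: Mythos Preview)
Your outline for part~(a) is essentially the paper's own argument: compute $|D_1|$ by Riemann--Roch with the singular-point corrections, extract the fibration map and the generic fibre, and then use the $2$-section $\tilde{L}_6$ to produce a degree~$2$ divisor on $S_t$ whose associated map to $\PP^1$ yields the quartic model. The paper records a couple of intermediate details you do not mention (the fibration map is given by two degree~$9$ weighted polynomials, and the first model of $S_t$ obtained is a singular degree~$10$ plane curve which is re-embedded smoothly in $\PP^9$ before the second Riemann--Roch step), but these are implementation details, not a different strategy.

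For part~(b) you take a genuinely different, though equally valid, route. You pass to the Jacobian of the quartic model and run Tate there, relying on the standard fact that a genus~$1$ fibration and its relative Jacobian share the same Kodaira fibre types. The paper instead performs the quadratic base change $t=s^2$: since the model is $ty^2=f(x)$ with $f$ monic of degree~$4$, the pullback $s^2y^2=f(x)$ acquires a section (the point at infinity), can be put in Weierstrass form over $\QQ(\sqrt{5})(s)$, and Tate's algorithm there determines the fibres away from $s=0,\infty$; these correspond two-to-one to the fibres away from $t=0,\infty$. Your Jacobian approach avoids the doubling of the base and stays over $\QQ(\sqrt{5})(t)$, at the cost of computing the invariants $I,J$ of a quartic with very large coefficients; the paper's base change trades that for a straightforward transformation to Weierstrass form. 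One small caveat: your Euler-characteristic remark (``leaving exactly $4$, realised as four $I_1$ fibres'') is only a consistency check, not a proof---Euler number $4$ is also compatible with, e.g., $I_2+I_2$ or $II+I_2$---so the conclusion really does rest on the Tate computation, as you implicitly acknowledge.
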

\begin{proof} Note that there is no fibration over $\QQ$ in this case since the
image of Kodaira fibre (i) $D$ under $\sqrt{5} \mapsto -\sqrt{5}$ is not a fibre
of the same fibration (it has non-zero intersection with $D$).

(a) The Riemann--Roch computation for Kodaira fibre (i) in Lemma~\ref{l:FibData} is much longer and harder in
this case than in the previous two. A  fibration map was returned of the form
$S \rightarrow \PP^1, \; (x:y:z:w)\mapsto (b_1: b_2)$,   where $b_1$ and $b_2$ are two degree 9 weighted polynomials in $x,y,z,w$ which
we do not write down here, but are in~\cite{BFHN19}.

As usual, letting $t=b_1/b_2$, we then computed a model for the generic fibre
$S_t$ of the fibration as a degree 10 plane curve $C$ over $\QQ(\sqrt{5})(t)$.
Using the degree 2 divisor $D$ on $C$ provided by $\tilde{L}_6$, and performing another
Riemann--Roch computation for $D$ on a non-singular embedding of $C$ in
$\PP^9$, we explicitly determined the 2-to-1 cover $S_t \rightarrow \PP^1$ corresponding to
$D$. Finally, a standard computation using differentials gave us
the equation for $S_t$ in the statement. This is laid out in~\cite{BFHN19}.
\medskip

(b) By the choice of $b_1, b_2$, the bad fibres over $0$ and $\infty$ are the two $II^*$ fibres. To
compute the other bad fibres, a $t=s^2$ substitution (giving a base change unramified
over $0,\infty$) allows the transformation to a Weierstrass cubic model over $\QQ (s)$ and standard application
of Tate's algorithm. This shows that the only other bad fibres of $S_t$ are $I_1$ fibres
at the stated points.
\end{proof}
\bigskip

\begin{remark}
Shioda--Inose structures associated with this fibration are made explicit in Section~\ref{sec:ComputationOfTheShiodaInoseStructureOfX}. We briefly include some extra information on that topic here.

There is a {\it Nikulin involution} $\iota$ \cite[Section 5]{Morrison_K3}, which is an
involution of $S$ over $\QQ(\sqrt{5})$ which swaps the two $II^*$ fibres and for which
the desingularised quotient $Y=S/\langle\iota\rangle$ is a Kummer surface.

From the explicit quartic equation of Proposition \ref{p:EllFibThree} (a), it is not too hard to show that $\iota$ is the
involution of $S$ associated with the isomorphism $\iota^*$ of the function field
$k(S)=k(t,x,y)$ ($k=\QQ(\sqrt{5})$)
$$ \iota^*: k(t,x,y)  \cong  k(t,x,y) $$
$$   t \mapsto 1/t \qquad x \mapsto \alpha x+\beta \qquad y \mapsto -\gamma y $$
where
$$ \alpha = t^{-2}\left(\frac{t+e}{et+1}\right) \qquad \beta = ((3035-1302\sqrt{5})/38)\left(\frac
  {(t-1)(t^2+ft+1)}{t^2(et+1)}\right) \qquad \gamma=t\alpha^2 $$
  with
$$ e = (138+67\sqrt{5})/19 \qquad f = (2770+1324\sqrt{5})/355 \, .$$

\noindent The following diagram commutes
$$ \begin{array}{rcl} S & \stackrel{\iota}{\longrightarrow} & S \\
  \downarrow && \downarrow \\
  \PP^1 & \stackrel{t \mapsto 1/t}{\longrightarrow} & \PP^1
  \end{array} $$
and $Y$ has a genus $1$ fibration with generic fibre over $k(s)$, $s=t+(1/t)-2$,
with quartic equation $sy_1^2=F(x_1)$ for a degree 4 monic polynomial $F$ over $k(s)$.
Here $x_1=x+\iota^*(x)$ and $y_1$ is an element of $k(s)$ times $y+\iota^*(y)$. We do not
write down the polynomial $F$ but it comes from the explicit computation of
$k(s,x_1,y_1)=k(t,x,y)^{\langle\iota^*\rangle}$. This computation and the explicit $F$
are in~\cite{BFHN19}. The surface
$Y$ is the minimal model over $\PP^1$ of this genus $1$ curve over $k(s)$.

\end{remark} 
\section{Computation of the Shioda--Inose structure} \label{sec:ComputationOfTheShiodaInoseStructureOfX}

In this section, we exhibit an explicit Shioda--Inose structure of the surface $S_{DY}$;
in doing so, we closely follow the exposition in~\cite{Morrison_K3} and~\cite{Naskrecki_hypergeometric}.

Let $X$ be any smooth algebraic surface over $\mathbb{C}$. 
The singular cohomology group $H^{2}(X,\mathbb{C})$ admits a Hodge decomposition
\[H^{2}(X,\mathbb{C})\cong H^{2,0}(X)\oplus H^{1,1}(X)\oplus H^{0,2}(X).\]
The N\'{e}ron--Severi group $\NS(X)$ of line bundles modulo algebraic equivalences naturally embeds into $H^{2}(X,\mathbb{Z})$ and can be identified with $H^{2}(X,\mathbb{Z})\cap H^{1,1}(X)$. 
This induces a structure of a lattice on $\NS(X)$. Its orthogonal complement in $H^{2}(X,\mathbb{Z})$ is denoted by $T_{X}$ and is called the \textit{transcendental lattice} of $X$. We denote by $\Lambda(n)$ the lattice with bilinear pairing $\langle\cdot, \cdot\rangle_{\Lambda(n)}=n\langle\cdot,\cdot\rangle_{\Lambda}$. Recall that for a K3 surface the notions of Picard group and N\'eron--Severi group coincide (cf.~\cite[Proposition 1.2.4]{Huy16}).

If $X$ is a K3 surface the lattice $H^{2}(X,\mathbb{Z})$ is isometric to the lattice $U^3\oplus E_{8}(-1)^2$ where $E_{8}(-1)$ denotes the standard $E_{8}$--lattice with opposite pairing, corresponding to the Dynkin diagram $E_{8}$. The lattice $U$ is the hyperbolic lattice which is generated by vectors $x,y$ such that $x^2=y^2=0$ and $x.y=1$.  Moreover, $\dim H^{2,0}(X)=1$. Any involution $\iota$ on $X$ such that $\iota^{*}(\omega)=\omega$ for a non--zero $\omega \in H^{2,0}(X)$ is called a \textit{Nikulin involution}.

It follows from \cite[Section 5]{Nikulin_Autom_K3} (see also \cite[Lemma 5.2]{Morrison_K3}) that every Nikulin involution has eight isolated fixed points and the rational quotient $\pi:X\dashrightarrow Y$ by a Nikulin involution gives a new K3 surface $Y$.

A given lattice $L$ has a \textit{Hodge structure} if $L\otimes\mathbb{C}$ has a Hodge decomposition, cf. \cite[Chapter 7]{Voisin_book_1}. There exists a \textit{Hodge isometry} between two lattices with a Hodge structure if they are isometric and the isometry preserves the Hodge decompositions, cf. \cite[Definition 1.4]{Morrison_K3}.

\begin{definition}[\protect{\cite[Definition 6.1]{Morrison_K3}}]
	A K3 surface $X$ admits a Shioda--Inose structure if there is a Nikulin involution on $X$ and the quotient map $\pi:X\dashrightarrow Y$ is such that $Y$ is a Kummer surface and $\pi_{*}$ induces a Hodge isometry $T_{X}(2)\cong T_{Y}$.
\end{definition}
Every Kummer surface admits a degree 2 map from an abelian surface $A$. It follows from \cite[Theorem 6.3]{Morrison_K3} that if $X$ admits a Shioda--Inose structure (Figure \ref{fig:Shioda_Inose})
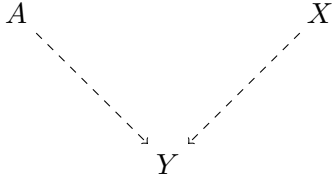
\begin{figure}[htb]
\centering
\begin{tikzpicture}

\node (v1) at (-2.5,1.5) {$A$};
\node (v3) at (1.5,1.5) {$X$};
\node (v2) at (-0.5,-0.5) {$Y$};
\draw[->]  (v1) edge[dashed] (v2);
\draw [->] (v3) edge[dashed] (v2);
\end{tikzpicture}
\caption{Shioda--Inose structure.}\label{fig:Shioda_Inose}
\end{figure}
then $T_{A}\cong T_{X}$. This follows from the fact that the diagram induces isometries $T_{A}(2)\cong T_{Y}$ and $T_{X}(2)\cong T_{Y}$. Alternatively, this is equivalent to the existence of an embedding $E_{8}(-1)^{2}\hookrightarrow \NS(X)$. 

\subsection{Shioda--Inose structure on the Drell--Yan K3 surface}

Let $S$ be the model of the Drell--Yan K3 surface introduced in
Section~\ref{sec:FullClassificationOfEllipticFibrationsOnX} in Proposition~\ref{p:EllFibThree}. 
The pullback of the generic fibre $S_{t}$ by the map $t\mapsto t^2$ produces a Kummer surface $\mathcal{K}$ with an explicit elliptic fibration $\mathcal{I}$. The fiber $\mathcal{I}_t$ above the point $t$ has equation
\[y^2=x^3+\frac{1}{6} \left(-45 \sqrt{5}-71\right) t^4 x+\frac{1}{2} \left(3-\sqrt{5}\right) t^8+\frac{1}{27} \left(-189 \sqrt{5}-551\right) t^6+\frac{1}{2} \left(3-\sqrt{5}\right) t^4.\]

\noindent Let $E(a,b)$ and $E(c,d)$ be the two elliptic curves defined by
\begin{align*}
    E(a,b)&\colon y^2=x^3+ax+b \; ,\\
    E(c,d)&\colon y^2=x'^3+c x'+ d \; .
\end{align*}
Consider the abelian surface $E(a,b)\times E(c,d)$ given by the product of the two elliptic curves defined above, 
and let $[-1]$ denote the automorphism of $E(a,b)\times E(c,d)$ given by multiplication by $-1$.
 Taking the quotient of  $E(a,b)\times E(c,d)$ by  $[-1]$, 
 we obtain a Kummer surface which has a natural elliptic fibration with parameter $u$:
\[x^3+ax+b-u^2(x'^3+cx'+d)=0.\]
This can be converted into the following  Weierstrass model, cf. \cite[\S 2.1]{Kumar_Kuwata}
\begin{equation}\label{eq:Kummer_fibr}
Y^2=X^3-3ac X+\frac{1}{64}(\Delta_{E(a,b)}u^2+864bd+\frac{\Delta_{E(c,d)}}{u^2})\; .
\end{equation}
The elliptic fibration $\mathcal{I}$ is isomorphic to \eqref{eq:Kummer_fibr}. Hence, we obtain the following system of equations:
\begin{align*}
A^2-5 &=0\; ,\\
1411985089 - 631459755A + 18ac &=0\; ,\\
131587540863282 - 58847737271814A + 108c^3 + 729d^2 &=0\; ,\\ 
-238992218766044 + 106880569389324A - 1458bd &=0\; ,\\ 
 131587540863282 + 108a^3 - 58847737271814A + 729b^2 &=0\; .
\end{align*}
Let $\mathcal{P}$ be the scheme defined by the above system of equations.
Let $K=\mathbb{Q}(\alpha,\beta)$ denote the number field where
$$\alpha=\sqrt{\frac{\sqrt{5}+1}{2}},\quad \beta=\sqrt[3]{\sqrt{2}-1}.$$

\begin{remark}
The field $K$ is isomorphic to $\mathbb{Q}[x]/\left( x^{24} - 24x^{18} - 18x^{12} + 24x^6 + 1 \right)$.
\end{remark}

The scheme $\mathcal{P}$ has exactly four $K$-rational points $\mathcal{P}_{i}=(a_i,b_i,c_i,d_i)$, $i=1,2,3,4$, $a_i,b_i,c_i,d_i\in K$. Each point $\mathcal{P}_{i}$ determines a pair of elliptic curves $E(a_i,b_i), E(c_i,d_i)$ and an abelian surface $A(\mathcal{P}_{i}) = E(a_i,b_i)\times E(c_i,d_i)$. For any two $i,j\in \{1,2,3,4\}$ there exists exactly one automorphism $\sigma_{i,j}:K\rightarrow K$ such that $A(\mathcal{P}_{i})$ is equal to the conjugate abelian surface $A(\mathcal{P}_{j})^{\sigma_{i,j}}$. Assume that $S$ admits a Shioda-Inose structure with the abelian variety $A(\mathcal{P}_j)$ for some $j\in\{1,2,3,4\}$. Thus $T_{S}$ is Hodge isometric to $T_{A(\mathcal{P}_j)}$. Since $S$ is defined over $\mathbb{Q}$ it is equal to all conjugates $S^{\sigma_{i,j}}$. Hence $T_{S}=T_{S^{\sigma_{i,j}}}\cong T_{A(\mathcal{P}_j)^{\sigma_{i,j}}}=T_{A(\mathcal{P}_i)}$ for every $i\neq j$.
It follows that the lattices $T_{A(\mathcal{P}_{i})}$ are Hodge isometric to each other. Hence, we fix one Shioda-Inose structure given by the following coordinates:
\begin{align*}
    A &=\sqrt{5}\, ,\\
    a &=\frac{1}{6} \left(10611 \sqrt{2}-18087 \sqrt{5}-4775 \sqrt{10}+40515\right) \beta\, ,\\
    b &=\frac{1}{27} \left(-4779461 \sqrt{5}+26 \sqrt{2} \left(113888-50921 \sqrt{5}\right)+10686297\right) \alpha\, ,\\
    c &=\frac{1}{6} \left(16 \left(832 \sqrt{5}-1869\right) \sqrt{2}+8537 \sqrt{5}-19293\right) \beta ^2\, ,\\
    d &=\frac{1}{27} \left(26 \left(50921 \sqrt{5}-113888\right) \sqrt{2}-4779461 \sqrt{5}+10686297\right) \alpha\, .
\end{align*}
Let $\mathcal{E}^{(d)}$ denote the quadratic twist by $d$ of an elliptic curve $\mathcal{E}$.
Let $E_{\mu,\nu}$ denote the elliptic curve given by
\[y^2=x^3+4x^2+2(1 -  4\mu\sqrt{2} - 3\nu\sqrt{5})x\; ,\]
with $\mu,\nu=\pm 1$.
The elliptic curve $E(a,b)$ is isomorphic to $E_{1}:=E_{1,1}$ and $E(c,d)$ is isomorphic to $E_{2}:=E_{-1,1}^{(-1)}$;
both isomorphisms are \textit{a priori} only defined over $\overline{\mathbb{Q}}$. Let $K_4=\mathbb{Q}(\sqrt{2},\sqrt{5})$.

\begin{proposition}\label{prop:model_compatibility}
The Kummer surface $\mathcal{K}=\Kum(E_{1},E_{2})$ attached to the abelian variety $E_{1}\times E_{2}$ is isomorphic  to the elliptic surface $\mathcal{I}$
over the quadratic extension $K_4(\eta)/K_4$,
where $\eta=\sqrt{117 \sqrt{2}+74 \sqrt{5}+37 \sqrt{10}+117}$.
\end{proposition}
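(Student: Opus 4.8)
The plan is to identify both sides as elliptic surfaces over $\PP^1$ with the same generic fibre, and to pin down exactly the base field over which the identification is realised. First I would write down the Weierstrass model of the Kummer fibration $\Kum(E_1,E_2)$ using the formula~\eqref{eq:Kummer_fibr}: substituting the coefficients $a,b,c,d$ coming from the explicit models $E_1=E_{1,1}$ and $E_2=E_{-1,1}^{(-1)}$ (so $a,c$ are the linear coefficients of the displayed cubics and $b=d=0$ in the short Weierstrass normalisation, after first completing the cube to clear the $4x^2$ term), and computing $\Delta_{E_1}$, $\Delta_{E_2}$, and $bd$. This produces a cubic in $X$ with coefficients that are Laurent polynomials in the Kummer parameter $u$, defined over $K_4=\QQ(\sqrt2,\sqrt5)$. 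In parallel, I would take the fibration $\mathcal{I}$ given in the text — the pullback of $S_t$ by $t\mapsto t^2$, with fibre $y^2=x^3+\tfrac16(-45\sqrt5-71)t^4x+\tfrac12(3-\sqrt5)t^8+\tfrac1{27}(-189\sqrt5-551)t^6+\tfrac12(3-\sqrt5)t^4$ — and normalise it to the same shape.

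Next I would compare the two Weierstrass models as elliptic curves over the respective rational function fields. Both are of the form $Y^2=X^3+A_2 X+A_0$ where $A_2$ is a constant and $A_0$ is a Laurent expression in the base parameter; one expects that after a substitution $u = \lambda t + \mu/t$ (or, allowing for the possibility that the two bases are identified only up to an automorphism of $\PP^1$ interchanging $0$ and $\infty$, $u=\lambda t^{\pm1}$) and an allowable Weierstrass change of variables $(X,Y)\mapsto(\kappa^2 X,\kappa^3 Y)$, the two cubics coincide. Matching the $j$-invariants as functions of the base parameter fixes the Möbius relation between $u$ and $t$; matching the remaining scaling forces a single algebraic number $\kappa$ (equivalently $\lambda$), and computing its square — or rather tracking which square roots of elements of $K_4$ are actually needed — is what produces the quadratic extension $K_4(\eta)$ with $\eta=\sqrt{117\sqrt2+74\sqrt5+37\sqrt{10}+117}$. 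Concretely I would: (i) verify the $j$-invariants agree after the Möbius substitution, forcing it to be essentially unique; (ii) read off the twisting factor relating the two models over $\overline{K_4}$; (iii) check that this factor, together with the Möbius scaling, lies in $K_4(\eta)$ and in no smaller field, i.e. $\eta^2\in K_4$ but $\eta\notin K_4$.

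I expect the main obstacle to be the bookkeeping in step (iii): the discriminants $\Delta_{E_1},\Delta_{E_2}$ and the coefficients of $\mathcal{I}$ involve nested radicals, and one must be careful that the quadratic twist ambiguity inherent in~\eqref{eq:Kummer_fibr} (the model only sees $E_1,E_2$ up to twist, and $E_2$ itself is already a $(-1)$-twist of $E_{-1,1}$) is resolved correctly, so that the extension one obtains is genuinely $K_4(\eta)$ and not, say, $K_4(\sqrt{\text{disc}})$ for some other element. This is a finite but delicate computation with elements of $K_4$ and their square roots; it is exactly the kind of step that is carried out with {\tt Magma} in the accompanying file~\cite{BFHN19}, and I would lean on that for the explicit isomorphism while presenting the $j$-invariant comparison and the identification of $\eta^2$ by hand. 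Once the isomorphism of generic fibres over $K_4(\eta)$ is established, the isomorphism of the elliptic surfaces themselves follows because each surface is the (unique up to isomorphism) relatively minimal smooth model of its generic fibre over $\PP^1$, as recalled in Section~\ref{ssec:BasicsEllFibr}.
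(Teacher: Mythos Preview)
Your strategy---compare the generic fibres of the two elliptic surfaces over the function field, pin down the base change and Weierstrass scaling, and then invoke uniqueness of the relatively minimal model---is exactly what the paper does, but you overcomplicate the execution. The paper does not pass through the short Weierstrass form~\eqref{eq:Kummer_fibr} at all: it writes the Kummer fibration directly as the genus~$1$ curve $\mathcal{K}_t\colon f_{E_1}(x)-t^2 f_{E_2}(y)=0$ using the \emph{given} cubics $f_{E_i}$ (with their $x^2$ terms intact), and then simply observes that the Weierstrass form of $\mathcal{K}_{\eta t}$ coincides with $\mathcal{I}_t$ over $K_4(t)$. In particular the base change is a pure scaling $t\mapsto\eta t$, so there is no need to match $j$-invariants to discover a M\"obius map, nor to track twist ambiguities separately: the single rescaling produces $\eta$ directly. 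Your remark that after completing the cube one has $b=d=0$ is incorrect---the cubics $x^3\pm4x^2+Bx$ acquire a nonzero constant term once the $x^2$ term is cleared---so had you pursued~\eqref{eq:Kummer_fibr} you would have found the $864bd$ term present; this would not have derailed the argument, but it does illustrate why the paper's choice to stay with the original cubics is cleaner.
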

\begin{proof}
The natural elliptic fibration on $\mathcal{K}$ is provided by the genus $1$ curve $$\mathcal{K}_{t}:x^3+4x^2+2(1 - 4\sqrt{2} - 3\sqrt{5})x-t^2(y^3-4y^2+2(1 + 4\sqrt{2} - 3\sqrt{5})y)=0.$$
It follows from a direct computation that the Weierstrass form of $\mathcal{K}_{t \cdot\eta}$ is isomorphic over $K_4(t)$ to $\mathcal{I}_{t}$.
\end{proof}
Let $E_{256.1-i2}$ denote the elliptic curve defined in
\cite[\href{https://www.lmfdb.org/EllipticCurve/4.4.1600.1/256.1/i/2}{Elliptic Curve 4.4.1600.1-256.1-i2}]{lmfdb}. Its Weierstrass equation is
\[E_{256.1-i2}: y^2=x^3+2 \left(\sqrt{2}+1\right)x^2+\frac{1}{2} \left(-10 \sqrt{2}-9 \sqrt{5}-6 \sqrt{10}-13\right)x.\]
The curve $E_{256.1-i2}$ is a quadratic twist of $E_{1,1}$ by the element $\kappa=\frac{1}{2}+\frac{1}{\sqrt{2}}$.
Let $\mathcal{L}_{1}$ denote the degree $8$ L-function over $\mathbb{Q}$ of the elliptic curve $E_{256.1-i2}$. Let $\rho$ denote the unique $2$-dimensional Artin representation of the field $\mathbb{Q}(1/\sqrt{\kappa})=\mathbb{Q}[x]/(-4 + 4 x^2 + x^4)$ and let $\mathcal{L}_{\rho}$ be the degree $2$ L-function over $\mathbb{Q}$ associated with $\rho$. Let $\mathcal{L}_{2}$ denote the degree $8$ L-function over $\mathbb{Q}$ associated with $E_{1,1}$. We denote by $L_{p}(\mathcal{L})$ the $p$-th Euler factor of the L-function~$\mathcal{L}$.

\begin{proposition}\label{prop:char_twist_of_E11}
For each prime $p\neq 2,5$ we have the equality
\[L_{p}(\mathcal{L}_{1}\otimes\mathcal{L}_{\rho},s) = L_{p}(\mathcal{L}_{2},s)^2.\]
\end{proposition}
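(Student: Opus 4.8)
The plan is to reinterpret the three $L$-functions as $L$-functions of $\ell$-adic representations of $G_{\QQ}=\Gal(\Qbar/\QQ)$ and to exhibit an isomorphism of such representations forcing the Euler-factor identity away from $2$ and $5$. Write $K_4=\QQ(\sqrt{2},\sqrt{5})$ for the quartic field over which both $E_{1,1}$ and $E_{256.1-i2}$ are defined, set $G=\Gal(\Qbar/K_4)$, and for a prime $\ell$ let $V_\ell(-)$ be the rational $\ell$-adic Tate module. Since the degree-$8$ $L$-function over $\QQ$ of an elliptic curve over $K_4$ is the $L$-function of the $G_{\QQ}$-representation induced from its Tate module, we have $\mathcal{L}_1=L\bigl(\operatorname{Ind}_{G}^{G_{\QQ}}V_\ell(E_{256.1-i2}),s\bigr)$, $\mathcal{L}_2=L\bigl(\operatorname{Ind}_{G}^{G_{\QQ}}V_\ell(E_{1,1}),s\bigr)$ and $\mathcal{L}_\rho=L(\rho,s)$. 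A short computation shows that $E_{1,1}$ and its quadratic twist $E_{256.1-i2}=E_{1,1}^{(\kappa)}$ have good reduction outside the primes above $2$ (as $1+\sqrt{2}$ is a unit, $\kappa=\tfrac12+\tfrac{1}{\sqrt{2}}$ is supported above $2$), that $K_4/\QQ$ is ramified only at $2$ and $5$, and that $\rho$, which factors through the Galois closure of $\QQ(1/\sqrt{\kappa})$ (of discriminant a power of $2$), is unramified outside $2$. Hence all three representations are unramified outside $\{2,5,\ell\}$, and it is enough to establish an isomorphism of $\ell$-adic $G_{\QQ}$-representations
\[
\operatorname{Ind}_{G}^{G_{\QQ}}V_\ell(E_{256.1-i2})\otimes\rho\ \cong\ \operatorname{Ind}_{G}^{G_{\QQ}}V_\ell(E_{1,1})\ \oplus\ \operatorname{Ind}_{G}^{G_{\QQ}}V_\ell(E_{1,1}),
\]
which at every prime $p\notin\{2,5\}$ immediately gives $L_p(\mathcal{L}_1\otimes\mathcal{L}_\rho,s)=L_p(\mathcal{L}_2,s)^2$.

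The first input I would establish is the restriction $\rho|_{G}$. The quadratic twist relation gives $V_\ell(E_{256.1-i2})\cong V_\ell(E_{1,1})\otimes\chi_\kappa$, where $\chi_\kappa$ is the quadratic character of $G$ cutting out $K_4(\sqrt{\kappa})/K_4$. As for $\rho$: the field $\QQ(1/\sqrt{\kappa})=\QQ(\sqrt{\kappa})$ contains $\sqrt{2}=2\kappa^2-\tfrac{3}{2}$ but not $\sqrt{5}$, and since $\kappa\kappa'=-\tfrac14$ with $\kappa'=\tfrac12-\tfrac{1}{\sqrt{2}}$ its Galois closure is $M=\QQ(\sqrt{\kappa},i)$, with $\Gal(M/\QQ)\cong D_4$ and $\rho$ the unique two-dimensional irreducible representation of this group. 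Then $M\cap K_4=\QQ(\sqrt{2})$ and $\Gal(M/\QQ(\sqrt{2}))\cong\ZZ/2\ZZ\times\ZZ/2\ZZ$ is normal of index $2$ in $D_4$; the restriction of the two-dimensional representation of $D_4$ to it is the sum of the two nontrivial characters interchanged by $D_4$, which are those cutting out $\QQ(\sqrt{2},\sqrt{\kappa})$ and $\QQ(\sqrt{2},\sqrt{\kappa'})$ rather than the ``central'' one cutting out $\QQ(\sqrt{2},i)$. Restricting further to $G$ and using $\kappa'\equiv-\kappa$ in $K_4^{*}/(K_4^{*})^{2}$, this yields
\[
\rho|_{G}\ \cong\ \chi_\kappa\ \oplus\ \chi_\kappa\chi_{-1},
\]
where $\chi_{-1}$ is the quadratic character of $G$ cutting out $K_4(i)/K_4$.

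The proof is then finished using the projection formula $\operatorname{Ind}_{G}^{G_{\QQ}}(W)\otimes U\cong\operatorname{Ind}_{G}^{G_{\QQ}}\!\bigl(W\otimes U|_{G}\bigr)$. Combined with the displayed descriptions of $V_\ell(E_{256.1-i2})$ and $\rho|_{G}$, together with $\chi_\kappa^2=1$, it gives
\[
\operatorname{Ind}_{G}^{G_{\QQ}}V_\ell(E_{256.1-i2})\otimes\rho\ \cong\ \operatorname{Ind}_{G}^{G_{\QQ}}V_\ell(E_{1,1})\ \oplus\ \operatorname{Ind}_{G}^{G_{\QQ}}\!\bigl(V_\ell(E_{1,1})\otimes\chi_{-1}\bigr).
\]
The first summand has $L$-function $\mathcal{L}_2$. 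For the second, $V_\ell(E_{1,1})\otimes\chi_{-1}=V_\ell\bigl(E_{1,1}^{(-1)}\bigr)$, and one computes that the $2$-isogeny on $E_{1,1}$ with kernel $\{O,(0,0)\}$ has image $K_4$-isomorphic, after the substitution $(x,y)\mapsto(2x,2\sqrt{2}\,y)$, to $E_{-1,-1}^{(-1)}$, the $(-1)$-quadratic twist of the Galois conjugate $E_{-1,-1}={}^{\sigma}E_{1,1}$, where $\sigma\in\Gal(K_4/\QQ)$ sends $\sqrt{2}\mapsto-\sqrt{2}$ and $\sqrt{5}\mapsto-\sqrt{5}$. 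Twisting by $-1$, the curve $E_{1,1}^{(-1)}$ is $K_4$-isogenous to $E_{-1,-1}$, so $V_\ell\bigl(E_{1,1}^{(-1)}\bigr)\cong V_\ell(E_{-1,-1})={}^{\sigma}V_\ell(E_{1,1})$, and since $G$ is normal in $G_{\QQ}$ we have $\operatorname{Ind}_{G}^{G_{\QQ}}\!\bigl({}^{\sigma}V_\ell(E_{1,1})\bigr)\cong\operatorname{Ind}_{G}^{G_{\QQ}}V_\ell(E_{1,1})$. Adding the two summands gives the required isomorphism, hence the proposition.

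I expect the main obstacle to be the determination of $\rho|_{G}$: one must correctly recognise the $D_4$-structure of $M/\QQ$, pick out the right pair of quadratic characters of the normal subgroup $\Gal(M/\QQ(\sqrt{2}))$ occurring in the two-dimensional representation, and keep careful track of square classes in passing from $G_{\QQ(\sqrt{2})}$ down to $G$; a slip here destroys the cancellation. The remaining ingredients — the quadratic twist and $2$-isogeny identities for $E_{1,1}$, the projection formula, and $\operatorname{Ind}({}^{\sigma}V)\cong\operatorname{Ind}(V)$ for $\sigma\in G_{\QQ}$ — are routine. In practice I would cross-check all of these, together with the good-reduction and unramifiedness statements, by comparing the Euler factors of $\mathcal{L}_1$, $\mathcal{L}_\rho$ and $\mathcal{L}_2$ at the first several primes $p\neq 2,5$ in {\tt Magma}. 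Note finally that the argument produces an isomorphism of honest $\ell$-adic representations; the restriction $p\neq 2,5$ in the statement is simply the range of primes at which $K_4/\QQ$ and $\rho$ are unramified, so that each object has a well-defined unramified Euler factor there.
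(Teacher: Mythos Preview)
Your argument is correct. The core identity you need, $\rho|_{G}\cong\chi_\kappa\oplus\chi_\kappa\chi_{-1}$, is right: the Galois closure $M=\QQ(\sqrt\kappa,i)$ has $\Gal(M/\QQ)\cong D_4$, the index-$2$ subgroup fixing $\QQ(\sqrt2)$ is the Klein four group with intermediate fields $\QQ(\sqrt2,\sqrt\kappa)$, $\QQ(\sqrt2,\sqrt{\kappa'})$, $\QQ(\sqrt2,i)$, and the two characters occurring in $\rho$ are exactly the non-central ones. Your $2$-isogeny computation is also correct (and matches, up to the harmless square $2\in(K_4^*)^2$, the isogeny recorded later in the paper in the lemma on $E_1$ being a $\QQ$-curve), so $\operatorname{Ind}_G^{G_\QQ}(V_\ell(E_{1,1})\otimes\chi_{-1})\cong\operatorname{Ind}_G^{G_\QQ}V_\ell(E_{1,1})$ and the displayed isomorphism follows.

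The paper's own proof is substantially terser: it simply records that $E_{256.1-i2}=E_{1,1}^{(\kappa)}$, asserts that tensoring $\mathcal{L}_1$ by $\mathcal{L}_\rho$ therefore yields $\mathcal{L}_2^2$ up to finitely many Euler factors, and then checks by direct computation that the exceptional primes are exactly $2$ and $5$. In particular the paper does not spell out why a \emph{square} appears, nor does it identify $\rho|_G$ or invoke the projection formula. Your route supplies precisely this missing structure: the square comes from $\dim\rho=2$ together with the fact that both summands $\chi_\kappa$ and $\chi_\kappa\chi_{-1}$ untwist $E_{256.1-i2}$ back to (a Galois conjugate of) $E_{1,1}$. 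The payoff is that you obtain an honest isomorphism of $G_\QQ$-representations and hence equality of Euler factors at all unramified primes without any {\tt Magma} verification; the paper's approach is shorter to state but leans on a machine check. Both ultimately rest on the same quadratic-twist relation.
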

\begin{proof}
The conclusion follows from the fact that both elliptic curves are related by a quadratic twist by $\kappa$. Hence, the tensor product of the L-function $\mathcal{L}_{1}$ by the Artin L-function $\mathcal{L}_{\rho}$ is equal to a square of the L-function of $E_{1,1}$ up to finitely many factors. We verify by a direct computation that those factors correspond to primes $p=2,5$.
\end{proof}

We are now ready to prove that $E_1$ is modular in two different ways, i.e. it corresponds to a certain Hilbert modular form and since it is a $\mathbb{Q}$-curve it also corresponds to a classical modular form over $\mathbb{Q}$. Since the curve $E_1$ is a twist of $E_{256.1-i2}$ and the latter curve has smaller conductor norm we explicitly prove the modularity of that curve instead. 

\begin{lemma}
The elliptic curve $E_{1}$ is a $\mathbb{Q}$-curve, i.e. it is isogenous over $\overline{\mathbb{Q}}$ to every Galois conjugate curve $E_{1}^{\sigma}$ for an automorphism $\sigma\in\Gal(\overline{\mathbb{Q}}/\mathbb{Q})$.
\end{lemma}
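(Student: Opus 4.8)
The plan is to verify the definition of $\QQ$-curve directly, by producing isogenies over $\Qbar$ from $E_1$ to each of its Galois conjugates. Since $E_1 = E_{1,1}$ is defined over $K_4 = \QQ(\sqrt{2},\sqrt{5})$, the conjugate $E_1^\sigma$ depends only on the image of $\sigma\in\Gal(\Qbar/\QQ)$ in $G := \Gal(K_4/\QQ)$, so it suffices to treat the three non-trivial elements of $G$. Write $\tau_2\colon\sqrt{2}\mapsto-\sqrt{2}$ and $\tau_5\colon\sqrt{5}\mapsto-\sqrt{5}$ for the generators of $G$; since $\tau_2$ and $\tau_5$ act trivially on the coordinates of the Weierstrass model $y^2 = x^3+4x^2+2(1-4\mu\sqrt{2}-3\nu\sqrt{5})x$, we get $E_1^{\tau_2} = E_{-1,1}$, $E_1^{\tau_5} = E_{1,-1}$ and $E_1^{\tau_2\tau_5} = E_{-1,-1}$. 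Because being $\Qbar$-isogenous is an equivalence relation and $\psi^\sigma\colon A^\sigma\to B^\sigma$ is an isogeny whenever $\psi\colon A\to B$ is, it is enough to exhibit just two isogenies over $\Qbar$, say $\mu\colon E_1\to E_1^{\tau_2\tau_5}$ and $\phi\colon E_1\to E_1^{\tau_2}$: then $\phi^{\tau_5}\colon E_1^{\tau_5}\to E_1^{\tau_2\tau_5}$ combined with $\mu$ also links $E_1^{\tau_5}$ to $E_1$, so all four conjugates lie in a single $\Qbar$-isogeny class.

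For $\mu$ I would use the rational $2$-torsion point $(0,0)$ of $E_{\mu,\nu}$: the standard $2$-isogeny formula produces a $2$-isogeny from $E_{\mu,\nu}$ onto $y^2 = x^3 - 8x^2 + 8(1+4\mu\sqrt{2}+3\nu\sqrt{5})x$, and a scaling defined over $\QQ(\sqrt{-2})$ identifies the latter with $E_{-\mu,-\nu}$ (compare the invariants $c_4$ and $c_6$). Taking $(\mu,\nu)=(1,1)$ gives $\mu\colon E_1\to E_{-1,-1} = E_1^{\tau_2\tau_5}$.

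The crux is $\phi$, an isogeny over $\Qbar$ linking $E_{1,1}$ with its $\tau_2$-conjugate $E_{-1,1}$; since $\mu$ relates $E_1$ only to $E_{-1,-1}$, a genuinely new isogeny is needed here. I would look for $\phi$ among isogenies of small degree: for successive $n$ one computes, from the $n$-division polynomial of $E_{1,1}$ together with V\'elu's formulas (or directly in {\tt Magma}), the curves that are $\Qbar$-isogenous to $E_{1,1}$ by a cyclic $n$-isogeny, and checks whether one of them has $j$-invariant equal to $\tau_2\bigl(j(E_{1,1})\bigr)$; the matching kernel then pins down $\phi$ explicitly. A computation of the conductor of $E_1$ (whose bad reduction is concentrated over $2$) suggests that $n$ will be a power of $2$. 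Alternatively, one can transport a suitable isogeny from the known isogeny class of $E_{256.1-i2}$ in~\cite{lmfdb} --- of which $E_1$ is the quadratic twist by $1/\kappa$ --- using that quadratic twisting commutes with isogeny. Granting $\phi$, the lemma follows: $\phi$ handles $\tau_2$, $\mu$ handles $\tau_2\tau_5$, and the composite of $\mu$ with the dual of $\phi^{\tau_5}$ handles $\tau_5$, so $E_1$ is isogenous over $\Qbar$ to every one of its Galois conjugates, i.e.\ it is a $\QQ$-curve. The main obstacle is thus the explicit construction of $\phi$ --- determining its degree and kernel --- which is a finite computation, to be recorded in the ancillary file~\cite{BFHN19}; all of the surrounding reasoning is formal.
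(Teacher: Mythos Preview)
Your strategy---reduce to the three nontrivial elements of $\Gal(K_4/\QQ)$ and exhibit an explicit isogeny in each case---is exactly the paper's. Your $2$-isogeny $\mu$ is precisely what the paper uses: it writes the target of the $2$-isogeny out of the $K_4$-rational point $(0,0)$ as a curve $F$ with $E_{-1,-1}\cong F^{(-2)}$, which is your $\sqrt{-2}$-scaling in disguise.

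The one point where your proposal goes astray is the heuristic for~$\phi$. The fact that the conductor of $E_1$ is supported at~$2$ says nothing about the degrees of isogenies in its class; in fact the isogeny you are looking for has degree~$3$, not a power of~$2$. The paper writes it down explicitly over $K_4$ as a $3$-isogeny $E_1\to E_2=E_{-1,1}^{(-1)}$ (hence over $\Qbar$ to $E_{-1,1}$ itself), with cyclic kernel generated by the point with $x$-coordinate $\tfrac{1}{3}(\sqrt{5}+3)\sqrt{2}+\tfrac{1}{3}(3\sqrt{5}+1)$. Your ``successive $n$'' search would still succeed at $n=3$, and your LMFDB alternative would also reveal it (the isogeny class of $E_{256.1\text{-}i2}$ contains $2$-, $3$- and even $7$-isogenies), so the defect is only in the heuristic guess, not in the method. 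Once you replace ``$n$ a power of $2$'' by ``$n=3$'' and record the kernel, your argument and the paper's coincide.
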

\begin{proof}
We have the following isomorphisms over $K_{4}$:
\begin{align*}
    E_{1,1} &=  E_1,   \hspace{1cm}  E_{-1,1} \cong E_{2}^{(-1)}\\
    E_{1,-1} &\cong G^{(2)}, \hspace{0.75cm} E_{-1,-1} \cong F^{(-2)}
\end{align*}
To prove the lemma it is enough to find an isogeny from $E_1$ to each curve $E_2$, $G$ and $F$.

Consider the map  $\phi:E_{1}\rightarrow E_{2}$ defined by
\begin{equation}
    \phi(x,y)=(\phi_x(x), \phi_y(x,y))
\end{equation} 
where
\begin{align*}
\phi_x(x):=& 
\frac{x \left(7 x^2+6 \left(\sqrt{2}+5\right) x-2 \sqrt{5} (x+3) \left(\sqrt{2} x+3\right)+54\right)}
{9 x^2-6 \left(3 \sqrt{5}+\sqrt{2} \left(\sqrt{5}+3\right)+1\right) x+18 \sqrt{5}+4 \sqrt{2} \left(5 \sqrt{5}+9\right)+74},\\
\phi_y(x,y):=&\frac{1}{D_y(x)} 
\bigg((63 x+142) x-\sqrt{5} (((11 x+23) x+34) x+72)+\\
& +\sqrt{2} \left(((17 x+38) x+82) x-2 \sqrt{5} ((9 x+19) x+12)+72\right)+192 \bigg) y\; ,\\
D_y(x):=&-27 x^3+27 \left(3 \sqrt{5}+\sqrt{2} \left(\sqrt{5}+3\right)+1\right) x^2+ \\
& -18 \left(9 \sqrt{5}+2 \sqrt{2} \left(5 \sqrt{5}+9\right)+37\right) x+\\
& +8 \left(54 \sqrt{5}+\sqrt{2} \left(32 \sqrt{5}+81\right)+95\right) \; .
\end{align*}    
  
The map $\phi$ is an isogeny of degree $3$;  
the kernel of $\phi$ is generated by a point with $x$-coordinate $1/3(\sqrt{5} + 3)\sqrt{2} + 1/3(3\sqrt{5} + 1)$.
 
The isomorphism $E_{1}[2](K_4)\cong\mathbb{Z}/2\mathbb{Z}$ implies that there exists a unique $2$-isogeny $\psi:E_{1}\rightarrow F$ over $K_4$. Similarly, since $E_{2}[2](K_4)\cong\mathbb{Z}/2\mathbb{Z}$, it follows that there exists a unique $2$-isogeny $\psi':E_{2}\rightarrow G$ over $K_4$ from $E_{2}$ with kernel $E_{2}[2]$.
\end{proof}

\begin{remark}
There exists also a $7$-isogeny from $E_{1,1}$ to the elliptic curve
\[\widetilde{E}:y^2=x^3+4 \left(18 \sqrt{10}+49\right) x^2+\left(-7888 \sqrt{2}-5046 \sqrt{5}+3528 \sqrt{10}+11282\right) x,\]
which is induced by the cyclic subgroup generated by the point with $x$-coordinate $(\sqrt{5} - 5)\sqrt{2} - 3\sqrt{5} + 3$. 
In total we have a cubic configuration of 2,3,7 isogenies, cf.~Figure \ref{fig:isogeny_graph}.

\begin{figure}[H]
    \centering
    \begin{tikzpicture}
   \matrix (m) [matrix of math nodes, row sep=3em,column sep=3em]{
    & \widetilde{E}& & * \\
    E_{1,1} & & E_{-1,1}^{(-1)} & \\
    & * & & * \\
    E_{-1,-1}^{(-2)} & & E_{1,-1}^{(2)} & \\};
  \path[-stealth]
    (m-1-2) edge (m-1-4) edge (m-2-1)
            edge [densely dotted] (m-3-2)
    (m-1-4) edge (m-3-4) edge (m-2-3)
    (m-2-1) edge [-,line width=6pt,draw=white] (m-2-3)
            edge (m-2-3) edge (m-4-1)
    (m-3-2) edge [densely dotted] (m-3-4)
            edge [densely dotted] (m-4-1)
    (m-4-1) edge (m-4-3)
    (m-3-4) edge (m-4-3)
    (m-2-3) edge [-,line width=6pt,draw=white] (m-4-3)
            edge (m-4-3);
\end{tikzpicture}
    \caption{Left-to-right maps: degree 3; top-to-bottom maps: degree 2; back-to-front maps: degree 7. A star denotes an explicit elliptic curve which can be computed from the given isogeny.}
    \label{fig:isogeny_graph}
\end{figure}
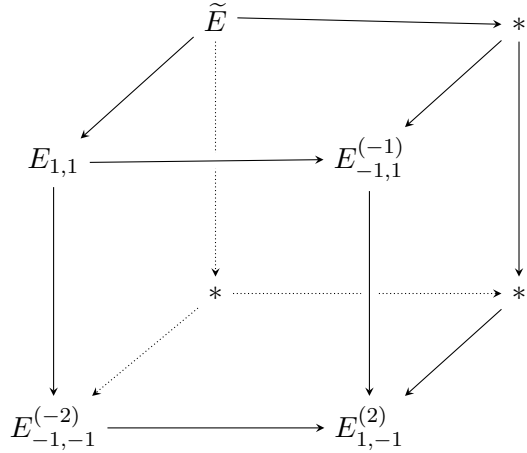
\end{remark}

An elliptic curve $\mathcal{E}$ over a totally real field $K$ is {\sl Hilbert modular} if there exists a Hilbert newform $f$ over $K$ of parallel weight $2$ and rational Hecke eigenvalues such that the $L$-functions $L(E,s)$ and $L(f,s)$ are equal.

\begin{lemma}
The elliptic curve $E_{256.1-i2}$ is Hilbert modular. The corresponding Hilbert modular form has conductor norm $256$ and is identified by the following label
\cite[\href{https://www.lmfdb.org/ModularForm/GL2/TotallyReal/4.4.1600.1/holomorphic/4.4.1600.1-256.1-i}{Hilbert form 4.4.1600.1-256.1-i}]{lmfdb}. 
\end{lemma}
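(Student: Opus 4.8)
The plan is to establish the claim in two stages: first to produce, via general modularity theory, \emph{some} Hilbert newform over $K_4$ of parallel weight $2$ with rational Hecke eigenvalues whose $L$-function matches $L(E_{256.1-i2},s)$, and then to identify that newform by a finite computation with the conductor and the first Hecke eigenvalues.

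For the existence stage I would use the $\mathbb{Q}$-curve structure already available. By the lemma above, $E_{1,1}$ is a $\mathbb{Q}$-curve, and $E_{256.1-i2}$ is its quadratic twist by $\kappa=\tfrac12+\tfrac1{\sqrt2}\in K_4^{\times}$; a quadratic twist of a $\mathbb{Q}$-curve is again a $\mathbb{Q}$-curve, since twisting an $\overline{\QQ}$-isogeny $\mu_\sigma\colon{}^{\sigma}E_{1,1}\to E_{1,1}$ by $\sigma(\kappa)$ and composing with an $\overline{\QQ}$-isomorphism $E_{1,1}^{(\sigma\kappa)}\cong E_{1,1}^{(\kappa)}$ gives an $\overline{\QQ}$-isogeny ${}^{\sigma}(E_{256.1-i2})\to E_{256.1-i2}$ (and explicit such isogenies can be obtained by twisting the ones written down in the proof of the preceding lemma). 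By Ribet's theorem on the modularity of $\mathbb{Q}$-curves---unconditional since Khare--Wintenberger's proof of Serre's conjecture---there is a classical newform $g$ of weight $2$ (of some level and nebentypus, with coefficient field $F\supseteq\QQ$) such that the compatible system $(\rho_\ell)_\ell$ of $\ell$-adic representations of $E_{256.1-i2}$ is a constituent of $(\rho_{g,\ell}|_{G_{K_4}})_\ell$. Applying solvable base change for $\mathrm{GL}_2$ along the tower $\QQ\subset\QQ(\sqrt2)\subset K_4$ transfers $g$ to a Hilbert modular form over $K_4$, and the Hecke factor cut out by $(\rho_\ell)_\ell$ is a Hilbert newform $f$ over $K_4$ of parallel weight $2$ with $L(f,s)=L(E_{256.1-i2},s)$; its central character is trivial ($\det\rho_\ell$ being cyclotomic) and its Hecke eigenvalues are rational integers, because $a_{\mathfrak p}(f)=N\mathfrak p+1-\#E_{256.1-i2}(\FF_{\mathfrak p})$ for every prime $\mathfrak p$ of good reduction. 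Hence $E_{256.1-i2}$ is Hilbert modular.

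For the identification stage I would compute the conductor $\mathfrak N=\mathfrak N(E_{256.1-i2})$ by running Tate's algorithm at the prime of $K_4$ above $2$ and checking good reduction elsewhere, obtaining $N\mathfrak N=256$. The form $f$ is then among the finitely many parallel-weight-$2$ Hilbert newforms of level $\mathfrak N$ and trivial character with rational eigenvalues, which can be enumerated with {\tt Magma}'s Hilbert modular forms functionality (equivalently, read off from the LMFDB tables for the field $4.4.1600.1$), and comparing $a_{\mathfrak p}(f)$ with $N\mathfrak p+1-\#E_{256.1-i2}(\FF_{\mathfrak p})$ for the first few $\mathfrak p$ singles out the newform with LMFDB label \texttt{4.4.1600.1-256.1-i}. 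To upgrade this numerical match to a proof of the identity $L(E_{256.1-i2},s)=L(f,s)$, I would apply the Faltings--Serre method (in Livn\'e's formulation for $2$-adic representations): the representations of $E_{256.1-i2}$ and of $f$ are unramified outside $2$ with equal determinant, and, since $(0,0)$ is an obvious $K_4$-rational $2$-torsion point of $E_{256.1-i2}$, their mod-$2$ reductions are reducible of a controlled shape, so agreement of Frobenius traces over the explicit finite list of test primes the method prescribes forces the two representations to be isomorphic. Combined with Proposition~\ref{prop:char_twist_of_E11}, this also yields the modularity of $E_{1}=E_{1,1}$, which is why it is enough to treat the smaller-conductor curve.

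The step I expect to be the main obstacle is the passage, in the existence stage, from ``$E_{256.1-i2}$ is a modular $\mathbb{Q}$-curve'' to the precise statement that $(\rho_\ell)_\ell$ is attached to a Hilbert \emph{newform over $K_4$} with rational eigenvalues and conductor exactly $\mathfrak N$: this requires tracking the nebentypus/twisting character produced by Ribet's construction, checking that it trivialises over $K_4$, and controlling base change at the bad prime above $2$ so that \emph{all} Euler factors---not only those at good primes---match. The remaining ingredients (the twisted isogenies to the Galois conjugates, Tate's algorithm, the point counts, and the enumeration of Hilbert newforms of level $\mathfrak N$) are routine and are carried out in the ancillary file~\cite{BFHN19}.
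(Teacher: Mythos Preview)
Your approach is correct but takes a genuinely different route from the paper. The paper does not go through the $\mathbb{Q}$-curve machinery at all for the existence step: instead it applies the direct modularity theorem for elliptic curves over totally real fields (Derickx--Najman--Siksek, building on Freitas--Le Hung--Siksek). The only hypothesis to verify is that the mod~$5$ Galois representation $\overline{\rho}_{E_{256.1\text{-}i2},5}$ is absolutely irreducible, which the paper checks by showing the $5$-division polynomial is irreducible over $K_4(\zeta_5)$, so the image is not Borel. This immediately yields a Hilbert newform over $K_4$ of parallel weight~$2$ with $L$-function equal to $L(E_{256.1\text{-}i2},s)$ and with level exactly the conductor $\mathfrak{N}$ of the curve; no base change, no twisting character to track, no worry about Euler factors at bad primes. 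The identification step is then the same as yours---compare eigenvalues against the nine candidates of conductor norm $256$---but the paper does not invoke Faltings--Serre/Livn\'e, and it need not: once modularity is established, strong multiplicity one guarantees that the finitely many comparisons distinguishing the nine forms already prove equality.

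What your approach buys is that it makes the link to the classical newform $f$ of level $160$ (the subject of the subsequent corollary) more transparent from the outset, since Ribet's construction literally produces $f$. What the paper's approach buys is that it sidesteps entirely the obstacle you yourself flag---controlling the nebentypus and the bad Euler factors through Ribet's twist and base change---and replaces it by a one-line irreducibility check. Your Faltings--Serre step is superfluous given your own existence stage.
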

\begin{proof}
The field $K_4(\zeta_5)$ is a quadratic extension of $K_4=\mathbb{Q}(\sqrt{2},\sqrt{5})$ where $\zeta_5$ is a primitive root of unity of degree $5$. The $5$-division polynomial of $E_{256.1-i2}$ is irreducible over $K_4(\zeta_5)$, hence the image of the modulo $5$ Galois representation $\rho=\overline{\rho}_{E_{256.1-i2},5}$ associated with $E_{256.1-i2}$ is not contained in the Borel subgroup and thus the image of $\rho$ is absolutely irreducible, cf. \cite[Prop. 2.1]{Freitas_modularity}. It follows from \cite[Thm.1]{Derickx_modularity} that the elliptic curve $E_{256.1-i2}$ is Hilbert modular. For the conductor norm $256$ there are exactly $9$ Hilbert newforms which could correspond to $E_{256.1-i2}$. A comparison of the L-series coefficients of $E_{256.1-i2}$ with those of the list of modular forms for several small primes reveals that the correct match is the form with a label \cite[\href{https://www.lmfdb.org/ModularForm/GL2/TotallyReal/4.4.1600.1/holomorphic/4.4.1600.1-256.1-i}{Hilbert form 4.4.1600.1-256.1-i}]{lmfdb}. 
\end{proof}

\begin{definition}
    A Hilbert modular form $\mathcal{H}$ defined over $F$ is a base change of a form $f$ defined over $E$ if the $L$-functions satisfy the condition
    \[L(\mathcal{H},s)=\prod_{\chi\in\Gal(F/E)^{\vee}}L(f\otimes\chi,s).\]
    \end{definition}
    \begin{remark}
    The definition of a base change is extracted from a general notion of a base change for $GL(2)$ forms, cf. \cite{Langlands_base_change}.
    \end{remark}
\begin{corollary}\label{c:Modular}
The Hilbert modular form $\mathcal{H}$ identified with the label \cite[\href{https://www.lmfdb.org/ModularForm/GL2/TotallyReal/4.4.1600.1/holomorphic/4.4.1600.1-256.1-i}{Hilbert form 4.4.1600.1-256.1-i}]{lmfdb} is a base change of the classical modular form $f$ of weight $2$ and level $160$ (identifier 
\cite[\href{https://www.lmfdb.org/ModularForm/GL2/Q/holomorphic/160/2/f/a/}{Newform 160.2.f.a}]{lmfdb}). In particular the Weil restriction $\Res_{\mathbb{Q}}^{K_{4}} E_{256.1-i2}$ of $E_{256.1-i2}$ is isogenous to the $\mathbb{Q}$-factor $A_f$ of the modular Jacobian $J(X_{1}(160))$ which corresponds to $f$. Moreover for every prime number $p$ and a prime ideal $\mathfrak{p}$ over $p$ it follows that $a_{\mathfrak{p}}(\mathcal{H})=a_{\sigma(\mathfrak{p})}(\mathcal{H})$ for every $\sigma\in\Gal(K_4/\mathbb{Q})$.
\end{corollary}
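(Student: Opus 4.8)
The plan is to deduce everything from the two facts already in hand — that $E_{256.1-i2}$ is Hilbert modular, with attached Hilbert newform $\mathcal{H}$ of conductor norm $256$, and that $E_{256.1-i2}$, being the quadratic twist of the $\mathbb{Q}$-curve $E_1$ by $\kappa$, is itself a $\mathbb{Q}$-curve which is completely defined over $K_4$ (every Galois conjugate $E_{256.1-i2}^{\sigma}$, $\sigma\in\Gal(K_4/\mathbb{Q})$, is $K_4$-isogenous to $E_{256.1-i2}$; this follows from the explicit isogeny and twist data assembled around Figure~\ref{fig:isogeny_graph} once one checks that the relevant twisting constants lie in $(K_4^{\times})^{2}$). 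The first and key step is then to upgrade this to the statement that $B:=\Res_{\mathbb{Q}}^{K_4}E_{256.1-i2}$ is an abelian fourfold of $\mathrm{GL}_2$-type \emph{over $\mathbb{Q}$}: the system of $K_4$-isogenies $E_{256.1-i2}\to E_{256.1-i2}^{\sigma}$ gives a class in $H^{2}(\Gal(K_4/\mathbb{Q}),\overline{\mathbb{Q}}^{\times})$, and the whole point of passing to the particular twist $E_{256.1-i2}$ (and of introducing the quartic field $\mathbb{Q}(1/\sqrt{\kappa})$, its $2$-dimensional Artin representation $\rho$, and the twisting identity of Proposition~\ref{prop:char_twist_of_E11}) is to exhibit this class as trivial. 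Granting $\mathrm{GL}_2$-type over $\mathbb{Q}$, Ribet's theorem attaching a classical newform to such an abelian variety — now unconditional by Serre's modularity conjecture (Khare--Wintenberger) — produces a weight-$2$ newform $g$ with $B\sim_{\mathbb{Q}} A_g$.

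Next I would pin down the level and then $g$ itself. The conductor--discriminant formula applied to the Tate module of $B$ gives $\mathrm{cond}(B)=\disc(K_4)^{2}\cdot N_{K_4/\mathbb{Q}}(\mathfrak{f}_{E_{256.1-i2}})=1600^{2}\cdot 256=160^{4}$, using $\disc(K_4)=1600=2^{6}\cdot 5^{2}$ and the conductor norm $256$ of $E_{256.1-i2}/K_4$. Hence $g$ has level $160$ and $[\mathbb{Q}(g):\mathbb{Q}]=\dim A_g=4$, with $\mathbb{Q}(g)$ a biquadratic field carrying the three inner twists coming from the $\mathbb{Q}$-curve structure. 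Among the finitely many weight-$2$ newforms of level $160$ whose attached abelian variety has dimension $4$, a comparison of a handful of Euler factors — computed from the point counts of the good reductions of $E_{256.1-i2}$, i.e.\ from the values $a_{\mathfrak{p}}(\mathcal{H})$ for $\mathfrak{p}\mid p$ — with the $L$-series data of the LMFDB singles $g$ out as $f=160.2.f.a$, exactly as $\mathcal{H}$ was pinned down among nine candidates earlier. This is the asserted isogeny $\Res_{\mathbb{Q}}^{K_4}E_{256.1-i2}\sim A_f$.

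The base-change statement then follows by combining $L(\mathcal{H},s)=L(E_{256.1-i2}/K_4,s)$ (Hilbert modularity of $E_{256.1-i2}$) with the Artin-formalism identity $L(E_{256.1-i2}/K_4,s)=L(B,s)=L(A_f,s)$: since the $\mathbb{Q}$-curve structure forces the Galois conjugates $f^{\sigma}$, $\sigma\in\Gal(\mathbb{Q}(f)/\mathbb{Q})$, to coincide with the quadratic twists $f\otimes\chi$, $\chi\in\Gal(K_4/\mathbb{Q})^{\vee}$ (with $\sigma\mapsto\chi$ the isomorphism $\Gal(\mathbb{Q}(f)/\mathbb{Q})\cong\Gal(K_4/\mathbb{Q})^{\vee}$ recording which conjugate is which twist), we obtain $L(A_f,s)=\prod_{\sigma}L(f^{\sigma},s)=\prod_{\chi\in\Gal(K_4/\mathbb{Q})^{\vee}}L(f\otimes\chi,s)$, which is precisely the condition for $\mathcal{H}$ to be the base change of $f$. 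Finally, the last assertion is immediate: $a_{\mathfrak{p}}(\mathcal{H})$ is the trace of Frobenius at $\mathfrak{p}$ on $E_{256.1-i2}$, while $a_{\sigma(\mathfrak{p})}(\mathcal{H})$ is the trace of Frobenius at $\mathfrak{p}$ on the conjugate curve $E_{256.1-i2}^{\sigma^{-1}}$, and these coincide because $E_{256.1-i2}^{\sigma^{-1}}$ is $K_4$-isogenous to $E_{256.1-i2}$ (equivalently, the Hecke eigenvalues of a base change from $\mathbb{Q}$ depend only on the rational prime below $\mathfrak{p}$, hence only on its $\Gal(K_4/\mathbb{Q})$-orbit).

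I expect the main obstacle to be the first step: proving that $B$ is of $\mathrm{GL}_2$-type over $\mathbb{Q}$ — equivalently, that the $2$-cocycle attached to the system of $K_4$-isogenies between the Galois conjugates of $E_{256.1-i2}$ is a coboundary — and then matching the resulting $A_g$ unambiguously to the single newform $160.2.f.a$. The first half is exactly what forces the detour through the particular twist $E_{256.1-i2}$, the quartic field $\mathbb{Q}(1/\sqrt{\kappa})$ and the Artin representation $\rho$ of Proposition~\ref{prop:char_twist_of_E11}, and is the only genuinely arithmetic input; the remainder (the conductor computation, the finite list of candidates, the Euler-factor matching, and the two $L$-function bookkeeping identities) is routine once the $\mathrm{GL}_2$-type statement is in place.
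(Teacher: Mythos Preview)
Your approach is correct and runs parallel to the paper's, but with a different logical ordering and a more structural emphasis. The paper proceeds essentially ``form-first'': it observes that $\mathcal{H}$ has trivial character over the biquadratic field $K_4$ (ramified only at $2,5$), so any base-change source $f$ must have weight $2$, character of order $\le 2$, Hecke field of degree $4$, and level a divisor of $160$; it then reads off the trace constraints $a_{31}(f)=-16$, $a_{41}(f)=0$, $a_{71}(f)=48$, $a_{79}(f)=16$ at totally split primes, finds the unique matching newform $160.2.f.a$ with Nebentypus $(10/\cdot)$, checks its inner-twist group is $C_2\times C_2$, and invokes \cite{Gonzalez_Lario_Quer} to conclude $A_f\sim_{\overline{\mathbb{Q}}}\prod_{\sigma}E_{256.1\text{-}i2}^{\sigma}$, with a final conductor comparison $2^{20}\cdot 5^4=\disc(K_4)^2\cdot N(\mathfrak{N})$ confirming $N(\mathfrak{N})=256$. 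You instead go ``curve-first'': establish that $B=\Res_{\mathbb{Q}}^{K_4}E_{256.1\text{-}i2}$ is of $\mathrm{GL}_2$-type over $\mathbb{Q}$ via triviality of the Ribet $2$-cocycle, apply Khare--Wintenberger/Ribet to produce $g$, and only then identify $g=160.2.f.a$ by the same conductor and Euler-factor matching. What your route buys is a cleaner justification that the isogeny $B\sim A_f$ descends to $\mathbb{Q}$ (the paper's citation of \cite{Gonzalez_Lario_Quer} gives this only over $\overline{\mathbb{Q}}$ as stated, and the descent is implicit); what the paper's route buys is that it sidesteps explicitly verifying the cocycle condition, which you correctly flag as the one nontrivial arithmetic input in your argument --- and which the paper never addresses directly, relying instead on the reference for that structural step.
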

\begin{proof}
The Hilbert newform $\mathcal{H}$ has trivial character and is defined over $K_4=\mathbb{Q}(\sqrt{2},\sqrt{5})$, a biquadratic extension of $\mathbb{Q}$. Hence, if it came from a base change of a form $f$ without twist, the character of the form $f$ is of order at most $2$. The field $K_4$ is ramified only at $2$ and $5$ and the level norm of $\mathcal{H}$ is a power of $2$. The weight of the form $f$ is $2$. Assuming $K_4$ is a minimal splitting field, the dimension of the abelian variety attached to $f$ over $\mathbb{Q}$ is~$4$. The trace of each coefficient of the form $f$ is $4$ times the Hecke eigenvalue of $\mathcal{H}$. 
The primes $31,41,71$ and $79$ are totally split in $K_4$ and so it would follow that 
\begin{equation}\label{eq:traces_of_f}
    a_{31}(f)=-16,\quad a_{41}(f)=0,\quad a_{71}(f)=48,\quad a_{79}(f)=16.
\end{equation}
There exists a unique newform $f$ of level $160$ with character $(10/\cdot)$ and such that \eqref{eq:traces_of_f} holds.

The group of inner twists of the form $f$ is isomorphic to $C_2\times C_2$ and that implies the modular abelian fourfold $A_f$ attached to $f$ defined over $\mathbb{Q}$ is isogenous over $\overline{\mathbb{Q}}$ to a product $\prod_{\sigma\in\Gal(K_4/\mathbb{Q})} E_{256.1-i2}^{\sigma}$, cf. \cite{Gonzalez_Lario_Quer}. By base change of $f$ to $\mathcal{H}$, the elliptic curve $E_{256.1-i2}$ is modular over $K_{4}$. The conductor of the imprimitive L-function of $f$ is $2^{20}\cdot 5^4$ and the conductor of an $L$-function of $E_{1,1}$ over $K_4$ is $\Delta(K_4)^2\cdot Nm(\mathfrak{N})$ where $\mathfrak{N}$ is the level of the Hilbert modular form. By comparison we conclude that $Nm(\mathfrak{N})=256$. This restricts the search to $9$ isogeny classes of Hilbert modular forms and the computation of the eigenvalues for the primes of norm $9$ allows us to decide on the correct class.
\end{proof}

\begin{theorem}\label{t:SIStructure}
Let $E_1,E_2$ be the two elliptic curves defined over the field $K_{4}=\mathbb{Q}(\sqrt{2},\sqrt{5})$ by the following equations:
\begin{align*}
    E_1&: y^2=x^3+4x^2+2(1 - 4\sqrt{2} - 3\sqrt{5})x\, ,\\
    E_2&: y^2=x^3-4x^2+2(1 + 4\sqrt{2} - 3\sqrt{5})x\, .
\end{align*}
They are $3$-isogenous over $K_{4}$. There is a Shioda--Inose structure on $S$ with the Kummer surface $\Kum(E_1\times E_2)$. Let $p\geq 7$ be a prime number. We have that
\[|S(\mathbb{F}_{p})|=1+17p+\left(1+\left(\frac{5}{p}\right)\right)p+\mu(p)+p^2\, \]
with $\mu(p)=a_p(f)^2-\epsilon(p)p$ satisfying
where $\epsilon(p) = (\frac{10}{p})$ is a Kronecker quadratic character. 

Moreover, for $p\geq 7$ the number of points over $\mathbb{F}_{p^2}$ satisfies the formula

$$|S(\mathbb{F}_{p^2})| = 1+18p^2+t(p)^2+p^4,$$
where $t(p)$ is the trace of the Frobenius on $\mathbb{F}_{p^2}$ acting on the reduction of the curve $E_1$.
\end{theorem}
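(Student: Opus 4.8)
The strategy is to compute, for each $m$, the characteristic polynomial of the geometric Frobenius $\Frob_{p^{m}}$ acting on $H^{2}(\overline{S},\QQ_{\ell})$ and substitute it into the Lefschetz trace formula; the Shioda--Inose structure together with Corollary~\ref{c:Modular} is what identifies the ``transcendental'' factor of that polynomial. First I would assemble the Shioda--Inose data. The $3$--isogeny $E_{1}\to E_{2}$ over $K_{4}$ is the degree $3$ isogeny $\phi$ constructed in the lemma preceding Corollary~\ref{c:Modular}. The Nikulin involution $\iota$ of the remark following Proposition~\ref{p:EllFibThree}, defined over $\QQ(\sqrt{5})$, exhibits $S$ as a double cover of the Kummer surface $Y=S/\langle\iota\rangle$; comparing Proposition~\ref{prop:model_compatibility} with the explicit elliptic surface $\mathcal{I}$ (the pullback of the fibration $S_{t}$ along $t\mapsto t^{2}$) identifies $Y$, over a finite extension of $\QQ$, with $\Kum(E_{1}\times E_{2})$. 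By Morrison's theory (\cite[Theorem 6.3]{Morrison_K3}) this is a Shioda--Inose structure, so there is a Hodge isometry $T_{S}(2)\cong T_{Y}\cong T_{A}(2)$ with $A=E_{1}\times E_{2}$, hence $T_{S}\cong T_{A}$ as Hodge structures; since $\rho(\overline{S})=19$ by Theorem~\ref{t:LambdaPic}, the lattice $T_{S}$ has rank $3$, consistent with $E_{1}$ and $E_{2}$ being isogenous. Because the underlying correspondences are algebraic and defined over a number field $L\supseteq K_{4}$, one gets $T_{S}\otimes\QQ_{\ell}\cong T_{A}\otimes\QQ_{\ell}$ as $G_{L}$--representations, and since $E_{1}\sim E_{2}$ over $K_{4}$ one has $T_{A}\otimes\QQ_{\ell}\cong\Sym^{2}H^{1}(\overline{E_{1}},\QQ_{\ell})$ as $G_{K_{4}}$--representations.

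\textbf{Upgrading to a representation over $\QQ$.} The heart of the matter is to promote these to an isomorphism of $G_{\QQ}$--representations $T_{S}\otimes\QQ_{\ell}\cong\Sym^{2}\rho_{f}$, where $\rho_{f}$ is the $\ell$--adic representation attached to the weight $2$, level $160$ newform $f$ of character $\epsilon=\left(\frac{10}{\cdot}\right)$. By Corollary~\ref{c:Modular} the restriction $\rho_{f}|_{G_{K_{4}}}$ is the twist of $H^{1}(\overline{E_{1}},\QQ_{\ell})$ by the quadratic character cut out by $K_{4}(\sqrt{\kappa})/K_{4}$ (here $E_{256.1-i2}$ is the $\kappa$--twist of $E_{1}$ with $\kappa=\tfrac12+\tfrac1{\sqrt2}$, and $\mathcal{H}$ is the base change of $f$); since $\Sym^{2}$ annihilates a quadratic twist, $\Sym^{2}\rho_{f}|_{G_{K_{4}}}\cong\Sym^{2}H^{1}(\overline{E_{1}},\QQ_{\ell})$, which by the previous paragraph becomes isomorphic to $T_{S}\otimes\QQ_{\ell}$ after restriction to $G_{L'}$ for a suitable finite $L'\supseteq K_{4}$. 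As $E_{1}$ has no complex multiplication, $\Sym^{2}\rho_{f}$ stays absolutely irreducible after restriction to $G_{L'}$, so the usual descent argument gives $T_{S}\otimes\QQ_{\ell}\cong\Sym^{2}\rho_{f}\otimes\psi$ for a finite--order character $\psi$ of $\Gal(L'/\QQ)$; comparing determinants bounds the order of $\psi$ by $6$, $\psi$ is unramified outside the (finitely many) primes of common bad reduction, which lie in $\{2,3,5\}$, and a comparison of Frobenius traces at a handful of primes $p\geq 7$ — using the values recorded in Corollary~\ref{c:Modular} and the point counts already obtained in Proposition~\ref{p:PicNumBound} — forces $\psi=1$. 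I expect this last step, together with the precise bookkeeping of the field over which the Shioda--Inose correspondences are defined, to be the main obstacle.

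\textbf{Lefschetz trace formula.} Since $S$ is a K3 surface with good reduction at every $p\geq 7$, one has $H^{1}=H^{3}=0$ and $H^{0},H^{4}$ contribute $1$ and $p^{2m}$, so $|S(\FF_{p^{m}})|=1+\operatorname{Tr}(\Frob_{p^{m}}\mid H^{2}(\overline{S},\QQ_{\ell}))+p^{2m}$. Decompose $H^{2}\otimes\QQ_{\ell}$, $G_{\QQ}$--equivariantly and compatibly with good reduction, as $N\oplus(T_{S}\otimes\QQ_{\ell})$, where $N$ is the $19$--dimensional span of the cycle classes of the divisors of $\Sigma$ generating $\Pic\overline{S}$, all defined over $\QQ(\sqrt{5})$. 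Frobenius acts on $N$ through $\Gal(\QQ(\sqrt{5})/\QQ)$; by Proposition~\ref{p:CohomologyGrps} the nontrivial element $\sigma$ has an $18$--dimensional $(+1)$--eigenspace and a $1$--dimensional $(-1)$--eigenspace, so $\operatorname{Tr}(\Frob_{p}\mid N)=19p$ if $\left(\frac{5}{p}\right)=1$ and $17p$ if $\left(\frac{5}{p}\right)=-1$, that is $17p+\bigl(1+\left(\frac{5}{p}\right)\bigr)p$ in either case, while over $\FF_{p^{2}}$ all nineteen classes are rational and $\operatorname{Tr}(\Frob_{p^{2}}\mid N)=19p^{2}$. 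On $T_{S}\otimes\QQ_{\ell}\cong\Sym^{2}\rho_{f}$ we get $\operatorname{Tr}(\Frob_{p}\mid\Sym^{2}\rho_{f})=a_{p}(f)^{2}-\epsilon(p)p=\mu(p)$ (and, being a sum of three Weil numbers of weight $2$ two of which multiply to $\epsilon(p)p$, $\mu(p)$ satisfies $-p\leq\mu(p)\leq 3p$); and since $\Frob_{p}^{2}\in G_{K_{4}}$, the previous paragraph gives $\operatorname{Tr}(\Frob_{p^{2}}\mid T_{S})=\operatorname{Tr}(\Frob_{p}^{2}\mid\Sym^{2}H^{1}(\overline{E_{1}},\QQ_{\ell}))=t(p)^{2}-p^{2}$, where $t(p)$ is the Frobenius trace of $E_{1}$ over $\FF_{p^{2}}$ (if $\gamma,\bar\gamma$ are its Frobenius eigenvalues then $\Sym^{2}$ has trace $\gamma^{2}+\gamma\bar\gamma+\bar\gamma^{2}=(\gamma+\bar\gamma)^{2}-\gamma\bar\gamma=t(p)^{2}-p^{2}$). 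Summing the contributions yields $|S(\FF_{p})|=1+17p+\bigl(1+\left(\frac{5}{p}\right)\bigr)p+\mu(p)+p^{2}$ and $|S(\FF_{p^{2}})|=1+19p^{2}+(t(p)^{2}-p^{2})+p^{4}=1+18p^{2}+t(p)^{2}+p^{4}$, which are the two asserted formulas.
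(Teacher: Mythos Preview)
Your proof follows the same overall strategy as the paper: Lefschetz trace formula, splitting $H^{2}$ into the $19$-dimensional algebraic piece (carrying the $\Gal(\QQ(\sqrt{5})/\QQ)$-action) and the $3$-dimensional transcendental piece, and identifying the latter via the Shioda--Inose structure together with the modularity of $E_{1}$. Two points of comparison are worth recording. For the algebraic trace you invoke Proposition~\ref{p:CohomologyGrps}, whereas the paper reads the Galois action directly from the N\'eron--Severi basis coming out of Proposition~\ref{p:EllFibTwo} (seventeen $\QQ$-rational classes and one conjugate pair over $\QQ(\sqrt{5})$); the two computations agree. For the crucial step of descending the isomorphism $T_{S}\otimes\QQ_{\ell}\cong\Sym^{2}\rho_{f}$ from $G_{L'}$ to $G_{\QQ}$, you argue by absolute irreducibility (Schur) to obtain $T_{S}\cong\Sym^{2}\rho_{f}\otimes\psi$ and then pin down $\psi$ via determinants and a finite trace comparison; the paper instead appeals to Scholl's theory of modular motives \cite{Scholl_motives}: because $a_{p}(f)^{2}\in\ZZ$, the motive $\Sym^{2}M(f)$ is defined over $\QQ$ with $\QQ$-coefficients and its $\ell$-adic realisation has trace $a_{p}(f)^{2}-\epsilon(p)p$. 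Your route is more explicit and self-contained but, as you rightly flag, needs the bookkeeping to force $\psi=1$; the paper's route is more conceptual but leaves the identification of $T_{S}$ with the realisation of $\Sym^{2}M(f)$ as $G_{\QQ}$-modules (rather than merely over $\FF_{p^{4}}$) somewhat implicit. Either way the endpoint and the arithmetic content are the same.
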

\begin{proof}
It follows from Proposition \ref{p:EllFibTwo} that there exists a basis of the N\'{e}ron--Severi group of $S$ in which all the elements of the basis are defined over $\mathbb{Q}$ except for the components not intersecting the zero section of the singular fibres above $t=\frac{1}{2}(-1\pm\sqrt{5})$. Under the action of an element 
$\sigma\in\Gal(\overline{\mathbb{Q}}/\mathbb{Q})$ such that $\sigma(\sqrt{5})=-\sqrt{5}$ the two components are permuted. Hence we conclude by Grothendieck--Lefschetz trace formula \cite[Appendix C \S 4]{Hartshorne:1977} that for a prime number $p$ of good reduction for $S$ we have
\[|S(\mathbb{F}_{p})| = 1+17p+\left(1+\left(\frac{5}{p}\right)\right)p+\mu(p)+p^2.\]
Let $H=H_{\mathbb{Q}_{\ell}}$ denote the orthogonal complement of the image of $\NS(S_{\overline{\mathbb{Q}}},{\mathbb{Q}_{\ell}})$ in $H^2_{et}(S_{\overline{\mathbb{Q}}},\mathbb{Q}_{\ell})$. For a prime $p\neq\ell$ of good reduction for $S$ there is a natural isomorphism $s:H^2_{et}(S_{\overline{\mathbb{Q}}},\mathbb{Q}_{\ell})\cong H^2_{et}(S_{\overline{\mathbb{F}}_{p}},\mathbb{Q}_{\ell})$. 
The number $\mu(p)$ is the trace of the Frobenius endomorphism $\Frob_{p}$ 
acting on the space $s(H)$ of dimension $3$.

From the existence of the Shioda--Inose structure on $S$ we know that the structure is determined by two elliptic curves $E_{a,b}$ and $E_{c,d}$. 
We find isomorphic (over $\overline{\mathbb{Q}}$) models of $E_{a,b}$ and $E_{c,d}$, respectively $E_{1}$ and $E_{2}$. 
It follows from Proposition \ref{prop:model_compatibility} that  the space $s(H)$ and $\Sym^2 H^1_{et}((E_1)_{\overline{\mathbb{F}}_{p}},\mathbb{Q}_{\ell})$ are isomorphic as $\Gal(\overline{\mathbb{F}}_{p}/\mathbb{F}_{p^4})$-modules. 
This follows from the existence of the $3$-isogeny between $E_{1}$ and $E_{2}$. 

The surface $S$ is isomorphic to the Inose fibration over $K_{8}=\mathbb{Q}(\sqrt{2},\sqrt{5},\eta)$ due to Proposition \ref{prop:model_compatibility}. The Galois group of the field $K_{8}$ is $C_{2}\times D_{4}$.
So, if the eigenvalues of the Frobenius $\Frob_{p^2}$ acting on $H^1_{et}((E_1)_{\overline{\mathbb{F}}_{p}},\mathbb{Q}_{\ell})$ are $\alpha,\beta$, then the eigenvalues of $\Frob_{p^2}$ acting on $\Sym^2 H^1_{et}((E_1)_{\overline{\mathbb{F}}_{p}},\mathbb{Q}_{\ell})$ are $\alpha^2,\beta^2,\alpha\beta$ and hence the trace of $\Frob_{p^2}$ on $\Sym^2 H^1_{et}((E_1)_{\overline{\mathbb{F}}_{p}},\mathbb{Q}_{\ell})$ is the same as the trace of $\Frob_{p^2}$ on $H$. Hence, the formula for the number of points in $S(\mathbb{F}_{p^2})$ follows.

The elliptic curves $E_1$ and $E_{2}$ are quadratic twists by $\frac{1}{2}+\frac{1}{\sqrt{2}}$ of $E_{256.1-i2}$ and $E_{256.1-i1}$, respectively. A Kummer surface $\Kum(\mathcal{E}\times\mathcal{F})$ associated to two elliptic curves $\mathcal{E}$, $\mathcal{F}$ is provided as a resolution of the double sextic $y^2=f_{\mathcal{E}}(x)f_{\mathcal{F}}(x')$ where $f_{\mathcal{E}}$, $f_{\mathcal{F}}$ are the cubic polynomials attached to the Weierstrass equation of $\mathcal{E}$ and $\mathcal{F}$. A simultaneous twist of $\mathcal{E}$ and $\mathcal{F}$ by an element $d$ provides a Kummer surface $\Kum(\mathcal{E}^{(d)}\times\mathcal{F}^{(d)})$ which is isomorphic to $\Kum(\mathcal{E}\times\mathcal{F})$ over the base field. It can be verified by a suitable change of coordinates.

Hence, we can replace in our considerations the product $E_{1}\times E_{2}$ with the product $E_{256.1-i2}\times E_{256.1-i1}$. Since the elliptic curves $E_{256.1-i1}$ and $E_{256.1-i2}$ are Hilbert modular and the corresponding form is a base change of the form $f$ it follows that $a_p(f)^2\in\mathbb{Z}$. Therefore the symmetric square motive $\mathcal{M}=Sym^2(M(f))$ of the modular motive $M(f)$ is defined over$\mathbb{Q}$ and has coefficients in $\mathbb{Q}$, cf. \cite[Theorem 1.2.4]{Scholl_motives}. Its $\ell$-adic realisation has trace $\mu(p)=a_p(f)^2-(\frac{10}{p})p$.
\end{proof}

\subsection{Supersingular reduction}\label{ssec:supersingular_reduction}

The Drell--Yan K3 surface has Picard rank 19 in characteristic $0$. When we reduce to characteristic $p$ the Picard rank can jumps to $20$ (ordinary reduction) or $22$ (supersingular reduction). We describe here under what conditions we have a supersingular reduction. 

Conjecturally, based on the Lang--Trotter heuristic \cite{Lang_Trotter}, \cite[Remark]{Elk89} the set of supersingular primes has density zero among all primes. However, the result of Elkies \cite{Elk89} proves that the there are infinitely many supersingular primes for the surface $S$. The sparseness of the set of supersingular primes provides a quantitative reason for why the proof of Proposition \ref{p:PicNumBound} was possible with a choice of two \textit{small} primes of non-supersingular reduction. In contrast, an argument such as that in Proposition \ref{p:PicNumBound} for a given K3 surface of Picard rank $20$ might require to use much larger prime numbers since the Picard rank jumps from $20$ to $22$ happen for a positive density of primes, cf. \cite[Theorem 1]{Shimada}.
\begin{corollary}
For primes $p$ such that $j\in \mathbb{F}_{p^2}$ is a supersingular $j$-invariant, we have that $\NS(S_{\overline{\mathbb{F}}_{p}})$ is of rank $22$, i.e., the prime $p$ is of supersingular reduction. The set of primes of supersingular reduction is infinite.
\end{corollary}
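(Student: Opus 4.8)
The plan is to transfer supersingularity from the elliptic curve $E_1$ occurring in the Shioda--Inose structure of Theorem~\ref{t:SIStructure} to the surface $S$, exploiting the identification — already obtained in the proof of that theorem — of the three‑dimensional transcendental Galois representation of $\overline{S}$ with $\Sym^2$ of the cohomology of $E_1$, and then invoking Elkies's theorem for the infinitude part. Throughout, $j$ denotes $j(E_1)\in K_4$; note that supersingular reduction of $E_1$, of $E_2$ (which is $3$-isogenous to $E_1$), and of the abelian surface $E_1\times E_2$ all coincide, and that a supersingular $j$-invariant automatically lies in $\mathbb{F}_{p^2}$, matching the phrasing of the statement.

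First I would fix a prime $p\geq 7$ of good reduction for $S$ and for all the data realising the Shioda--Inose structure, together with a prime $\mathfrak p$ of $K_4$ above $p$. As established in the proof of Theorem~\ref{t:SIStructure}, the orthogonal complement $H=H_{\mathbb{Q}_\ell}$ of $\NS(\overline{S})\otimes\mathbb{Q}_\ell$ inside $H^2_{et}(\overline{S},\mathbb{Q}_\ell)$ is $3$-dimensional and is isomorphic, as a module for a finite-index subgroup of $\Gal(\overline{\mathbb{Q}}/\mathbb{Q})$ (resp. of $\Gal(\overline{\mathbb{F}}_p/\mathbb{F}_p)$ after reduction at $\mathfrak p$), to $\Sym^2 H^1_{et}((E_1)_{\overline{\mathbb{F}}_p},\mathbb{Q}_\ell)$; this uses Proposition~\ref{prop:model_compatibility} and the $3$-isogeny $E_1\to E_2$.

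Second, assume the reduction of $j$ modulo $\mathfrak p$ is a supersingular $j$-invariant, equivalently that $E_1$ has supersingular reduction at $\mathfrak p$. Passing to a sufficiently large finite field $\mathbb{F}_q$ (one large enough that $E_1$, its geometric endomorphisms, $\sqrt 5$, and the characteristic-$0$ Picard lattice of $\overline{S}$ are all defined over $\mathbb{F}_q$), the Frobenius acts on $H^1_{et}((E_1)_{\overline{\mathbb{F}}_p},\mathbb{Q}_\ell)$ as the scalar $\pm\sqrt q$ — since $E_1$ is supersingular and $p\geq 7$ — and hence on $\Sym^2 H^1_{et}((E_1)_{\overline{\mathbb{F}}_p},\mathbb{Q}_\ell)\cong H$ as the scalar $q$. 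Combined with the $19$ algebraic Frobenius eigenvalues already equal to $q$ coming from the characteristic-$0$ Picard lattice — which injects into $\NS(\overline{S_p})$ with torsion-free cokernel — all $22$ eigenvalues of Frobenius on $H^2_{et}(\overline{S_p},\mathbb{Q}_\ell)$ are then equal to $q$. By the Tate conjecture for K3 surfaces over finite fields of characteristic $\geq 5$ (cf.\ the references in the proof of Proposition~\ref{p:PicNumBound}) this forces $\rk\NS(S_{\overline{\mathbb{F}}_p})=22$, i.e.\ $p$ is a prime of supersingular reduction for $S$. Alternatively, one may reduce the diagram of Figure~\ref{fig:Shioda_Inose} modulo $\mathfrak p$: the two degree-$2$ rational maps have good reduction away from finitely many primes, so the induced correspondences identify the transcendental $\ell$-adic cohomology — hence the Picard ranks — of $\overline{S_p}$ and of $\Kum((E_1\times E_2)_p)$, and the latter is a supersingular K3 surface because $(E_1\times E_2)_p$ is a supersingular abelian surface.

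Finally, Elkies's theorem~\cite{Elk89} furnishes infinitely many primes of supersingular reduction for $E_1$ over the totally real field $K_4$; discarding the finitely many $p<7$ together with the finitely many primes of bad reduction for $S$, for $E_1$, and for the maps realising the Shioda--Inose structure, each remaining such prime is supersingular for $S$ by the previous step, so the set of supersingular primes of $S$ is infinite. The hard part will be justifying the first step in positive characteristic: one must verify that the characteristic-$0$ identification $H\cong\Sym^2 H^1_{et}(E_1)$ — built from the Inose fibration and Nikulin involution of Proposition~\ref{p:EllFibThree}, Proposition~\ref{prop:model_compatibility} and the $3$-isogeny — survives reduction modulo all but finitely many primes, equivalently that the algebraic correspondences underlying the Shioda--Inose structure have good reduction and still induce the claimed maps on étale cohomology. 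Since every fibration, involution, isogeny and rational map involved is written down explicitly in~\cite{BFHN19}, pinning down and excluding the resulting finite set of bad primes is routine though tedious; the rest is formal.
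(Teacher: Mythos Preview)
Your argument is correct, but the paper takes a shorter and more geometric route. Rather than analysing Frobenius eigenvalues on $\Sym^2 H^1_{et}(E_1)$ and invoking the Tate conjecture, the paper simply quotes the Shioda--Inose formula $\rho(S_{\overline{\mathbb{F}}_p})=18+\rank\Hom(\tilde E_1,\tilde E_2)$ for a K3 surface carrying an Inose $(II^*,II^*)$-fibration (cf.\ \cite[\S 12.2.4]{Schutt_Shioda_book}). Since $E_1$ and $E_2$ are isogenous without CM, $\rank\Hom(\tilde E_1,\tilde E_2)$ jumps from $2$ to $4$ exactly when the reduction is supersingular, giving $\rho=22$ immediately; the infinitude then follows from Elkies's theorem applied over $K_4$, which has a real place. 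Your approach has the virtue of making the Galois representation on the transcendental lattice fully explicit, in line with the point-counting theme of Theorem~\ref{t:SIStructure}, but it pays for this with the extra inputs of the Tate conjecture and the careful (though, as you note, routine) check that the correspondences realising the Shioda--Inose structure reduce well. The paper's formula sidesteps both issues: it is a lattice-theoretic statement about the N\'eron--Severi group of the minimal model of the Inose fibration, valid in any characteristic of good reduction, so no separate Tate-type input or reduction argument for the Nikulin involution is needed.
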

\begin{proof}
The rank $\rho(p)$ of the group $\NS(S_{\overline{\mathbb{F}}_{p}})$ is equal to $18+\rank\Hom(\tilde{E}_1,\tilde{E}_2)$ for a reduction modulo $p$ of the curves $E_{1}$, $E_{2}$, cf. \cite[\S 12.2.4]{Schutt_Shioda_book}. 
Since the curves $E_{1},E_{2}$ are linked by an isogeny and they do not have complex multiplication, it follows that $\rank\Hom(\tilde{E}_1,\tilde{E}_2)=2$ unless they have supersingular reduction at $p$.
Since $E_{1},E_{2}$ are defined over a field with at least one real embedding, it follows from \cite{Elk89} that there are infinitely many supersingular primes.
\end{proof}

\textbf{Supersingular primes computation:} To compute the primes of supersingular reduction in practice, we perform the following algorithm. First, we compute the minimal polynomial of the $j$-invariant of the curve $E_1$, namely
\[P(T)=T^4 - 6416768T^3 + 12470497280T^2 + 27021904707584T - 34447407894757376.\]
The elliptic curve $E_{1}$ has supersingular reduction at a prime ideal $\mathfrak{p}$ above a rational prime $p$ 
if the polynomial $P(T)$ modulo $p$ has a common root with the polynomial $S_{p}(T)=\prod_{j}(T-j)$, 
where the product is over supersingular $j$-invariants. 
The latter is computed effectively, cf. \cite[V]{silverman}, \cite{Finotti}. In fact, we checked all the odd primes $p$ smaller than $104729$ and the elliptic curve $E_{1}$ modulo $p$ is supersingular for the following values of $p$:
\begin{align*}
    &13, 29, 41, 113, 337, 839, 853, 881, 953, 1511, 1709, 1889, 2351, 3037, 3389, 4871, 5557,\\ 
    &5711,5741, 6719, 6733, 7237, 8821, 14489, 14869, 14951, 15161, 15791, 15973, 18229, 18257,\\ 
    &18313,18341, 20021, 21517, 23197, 24359, 26921, 27749, 28559, 33349, 33461, 33599, 34649,\\
    &37813, 40151,44101, 45389,47629, 49057, 50077, 50231, 52919, 54277, 54377, 58631, 60689,\\
    &64679, 65269, 68879, 69761, 70237, 70309, 72269,72911, 78791, 91309, 101501.
\end{align*}
\begin{remark}
It is worth pointing out that the explicit construction of a Shioda-Inose structure allows one to compute in practice the list of supersingular primes to a much higher bound than the approach through point counts discussed in Proposition \ref{p:PicNumBound}. In particular, our threshold of primes $p\leq 104729$ for the algorithm above becomes completely infeasible for the approach in Proposition \ref{p:PicNumBound}.
\end{remark}
\section{Computing the Picard lattice via elliptic fibrations} \label{sec:ComputingPicXViaEllipticFibrationsOnX}
This is the section on the computation of $\Pic \Sbar$ based on elliptic fibrations on $S$.
Recall that: $S$ is the desingularisation of the surface $X_{DY}\subset \PP(1,1,1,3)$ defined in~\eqref{eq:DY};
the map $\pi\colon S\to \PP^1$ is the elliptic fibration defined in Proposition~\ref{p:EllFibTwo};
we denote by $T$ the image in $\Pic S$ of a torsion section of $\pi$ (for example $(0,0)$), 
by $F$ the image of the general fiber $E_2$, and by $O$ the image of the zero section;
finally we denote by $N:=\NS(\Sbar)$ the geometric N\'eron--Severi group of~$S$.

\begin{remark}
As already noted on a K3 surface the notions of Picard group and N\'eron--Severi group coincide,
hence $\Pic \Sbar = N$.
In this section, we use the latter notion instead of the former, and we rely essentially upon the results contained in~\cite{Shi89}.
\end{remark}

Every singular fibre $\pi^{-1}(v)$ of the fibration $\pi$ has type $I_n$ and we order the components in a cyclic order, cf.~\cite{Shi89}, i.e., $\theta_{i}^{v}$ for $i=0,\ldots, n-1$, component $\theta_{v}^{i}$ intersects once the components $\theta_{v}^{i-1}$ and $\theta_{v}^{i+1}$ (enumeration modulo $n$). The component $\theta_{v}^{0}$ is the unique component that intersects the zero fibre.

\noindent The N\'{e}ron--Severi group of an elliptic surface is generated by the following divisors:
\begin{itemize}
    \item all components of the singular fibres, 
    \item images of sections which correspond bijectively to points in the Mordell--Weil group of the generic fibre.
\end{itemize}
Since the numerical and algebraic equivalence coincide on an elliptic surface \cite{Shi89}, it follows that it is enough for the N\'{e}ron--Severi group to consider the spanning set which contains only the components of the reducible fibres which do not intersect the zero component.
\begin{proposition}\label{p:NS}
	The N\'{e}ron--Severi group $N$ of $S$ is a lattice of rank $19$ and discriminant $24$. It is spanned by $P=P_3, T, F, O$, and the components of the singular fibres in fibration $\pi$ which do not intersect the zero section $O$ and lie above the following points: 
	
	\begin{itemize}
	    \item $t=0$: components $a_i=\theta_{t=0}^{i}$ for $i=1,\ldots, 9$;
	    \item $t=-1$: component $\theta_{t=-1}^{1}$;
	    \item $t=\frac{1}{2}(-1\pm\sqrt{5})$: components $\theta_{\pm}^{1}$;
	    \item $t=1$: components $b_{i}=\theta_{t=1}^{i}$, $i=1,2,3$.
	\end{itemize}
\end{proposition}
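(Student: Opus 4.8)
The plan is to run the Shioda--Tate formalism on the elliptic fibration $\pi\colon S\to\PP^1$ of Proposition~\ref{p:EllFibTwo}, whose Weierstrass model $E_2$, list of bad fibres, and Mordell--Weil group are recorded there: first one pins down the rank of $N$, then one checks that the classes furnished by Shioda's theorem reduce to the nineteen divisors in the statement, and finally one evaluates the Gram matrix of those nineteen. For the rank, the trivial lattice $\Triv(S)=\langle O,F\rangle\oplus\bigoplus_v T_v$ is generated by $O$, $F$, and the components of the reducible fibres not meeting $O$; by Proposition~\ref{p:EllFibTwo}(b) those fibres are $I_{10}$ over $t=0$, $I_4$ over $t=1$, and $I_2$ over each of $t=-1,\infty,\tfrac12(-1\pm\sqrt5)$, so $\rk\Triv(S)=2+(9+3+1+1+1+1)=18$ and the Shioda--Tate formula gives $\rk N=18+\rk\MW(S_t)$. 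Proposition~\ref{p:EllFibTwo}(c) exhibits a subgroup $\ZZ/2\ZZ\oplus\ZZ$ of $\MW(S_t)$, so $\rk\MW(S_t)\ge 1$, while Proposition~\ref{p:PicNumBound} gives $\rk N=\rho(\Sbar)\le 19$; hence $\rk\MW(S_t)=1$ and $\rk N=19$ (so, in passing, the bound of Proposition~\ref{p:PicNumBound} is attained). A height computation for $P_3=(4t^3,4t^3(t^2-1))$, together with the constraint on the torsion imposed by the fibre types, then gives $\MW(S_t)=\ZZ/2\ZZ\oplus\ZZ$ with generators $T=(0,0)$ and $P_3$.

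For the generating set, by Shioda's description of the N\'eron--Severi lattice of an elliptic surface (\cite{Shi89}; the restriction-to-the-generic-fibre map $N\twoheadrightarrow\MW(S_t)$ has kernel $\Triv(S)$), $N$ is generated by $O$, $F$, the sixteen non-identity fibre components, and lifts of the generators $T$ and $P_3$ of $\MW(S_t)$ --- twenty classes in all. Since $\rk N=19$ and $N$ is torsion-free, there is, up to sign, a single $\ZZ$-relation among these twenty, and the point is that it can be solved for the non-identity component $\theta_{t=\infty}^1$ of the $I_2$ fibre over $t=\infty$. Indeed $T$ is $2$-torsion in $\MW(S_t)$, so $2\bigl([T]-[O]\bigr)\in\Triv(S)$; equivalently the Shioda class $\phi(T)$ vanishes, so $[T]-[O]-\bigl([T]\!\cdot\![O]+2\bigr)[F]$ equals a rational combination of fibre components whose $\theta_{t=\infty}^1$-coefficient is $\pm\tfrac12$ --- this being precisely the statement that $T$ meets the non-identity component over $t=\infty$, read off from the reduction of $E_2$ at $t=\infty$ (equivalently from the intersection matrix $M$ of $\mathcal S$ via the identifications in Lemma~\ref{l:FibData}(b)). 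Clearing the $2$ and using that the non-identity components over the finite bad fibres form a $\ZZ$-basis of $\bigoplus_{v\neq\infty}T_v$, this expresses $\theta_{t=\infty}^1$ as an integral combination of $O$, $F$, $T$, and the classes $a_1,\dots,a_9,b_1,b_2,b_3,\theta_{t=-1}^1,\theta_\pm^1$. Hence the nineteen listed divisors span $N$.

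For the discriminant, one writes down the Gram matrix of the nineteen generators using the standard elliptic-surface rules: $O^2=-2$, $F^2=0$, $O\!\cdot\!F=1$; the fibre class $F$ meets each of $O,T,P_3$ once and every fibre component zero times; $O$ meets each listed (non-identity) component zero times; the listed components of each reducible fibre form an $A_{n-1}$ chain with diagonal entries $-2$; each section meets exactly one component of each fibre transversally; and $T^2=P_3^2=-2$. The few remaining entries --- $T\!\cdot\!O$, $P_3\!\cdot\!O$, $T\!\cdot\!P_3$, and which component of each fibre is met by $T$ and by $P_3$ --- follow from the explicit equations of Proposition~\ref{p:EllFibTwo}, and several of them already appear in $M$, since all of the $E_{i,j}$ and some of the $L_i$ occur among the listed divisors. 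The determinant of the resulting $19\times 19$ integer matrix is $24$, with the sign $+$ forced by $N$ having signature $(1,18)$; this is the asserted discriminant.

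The main obstacle throughout is the incidence bookkeeping required for the generation step and for the Gram matrix: one must know, over every bad place of $\pi$, which component of the fibre each of $O$, $T$, $P_3$ meets --- both to identify $\theta_{t=\infty}^1$ as the removable generator and to fill in the off-fibre entries of the Gram matrix. This is a finite, mechanical computation (Tate's algorithm applied to $E_2$ at each bad fibre, cross-checked against the matrix $M$), but it carries essentially all the work; the rest is the formal Shioda--Tate apparatus together with a determinant evaluation.
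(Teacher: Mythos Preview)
Your proposal is correct and follows essentially the same approach as the paper: Shioda--Tate plus Proposition~\ref{p:PicNumBound} for the rank, a height computation and torsion analysis to pin down $\MW(S_t)=\ZZ/2\ZZ\oplus\ZZ$, the relation coming from the $2$-torsion section $T$ to eliminate $\theta_{t=\infty}^1$ from the generating set, and a $19\times 19$ Gram determinant for the discriminant. The paper differs only in emphasis---it makes the height and torsion arguments explicit (obtaining $\langle P_3,P_3\rangle=3/20$ and ruling out $4$-torsion by hand), writes down the linear relation among the twenty classes explicitly, and derives $\disc N=24$ first via Shioda's formula $\disc N=\disc\Triv\cdot\disc\MW/|\MW_{\tors}|^2=96/4$ before confirming it against the Gram matrix.
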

\medskip
The dual graph of the $-2$-curves which generate the N\'{e}ron--Severi group is represented in Figure~\ref{fig:NS_group_dual_graph}. We include for completeness also the component of the fibre above $t=\infty$ which is not used in the basis. Each edge $A-B$ represents a unique transversal intersection between curves $A$ and $B$.

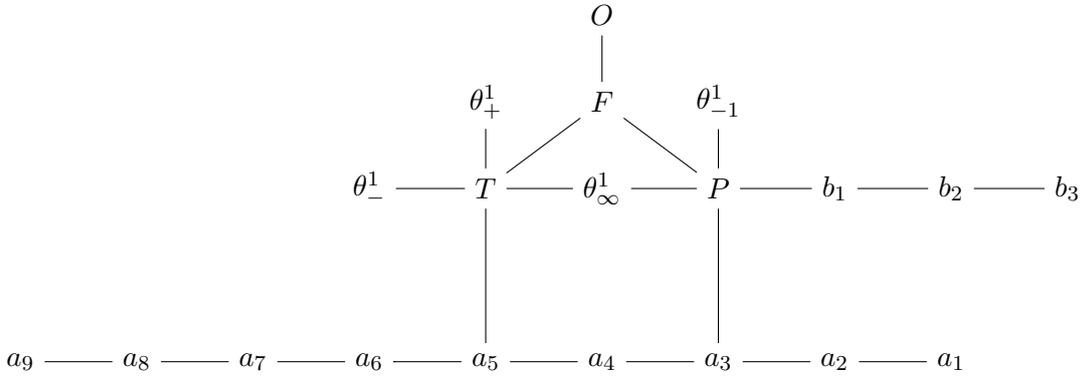
\begin{figure}[htb]
	\centering
	\begin{tikzpicture}[x=4em,y=3em]
	\node (a9) at (0,0){$a_9$};
	\node (a8) at (1,0){$a_8$};
	\node (a7) at (2,0){$a_7$};
	\node (a6) at (3,0){$a_6$};
	\node (a5) at (4,0){$a_5$};
	\node (a4) at (5,0){$a_4$};
	\node (a3) at (6,0){$a_3$};
	\node (a2) at (7,0){$a_2$};
	\node (a1) at (8,0){$a_1$};
	\node (b1) at (7,2){$b_1$};
	\node (b2) at (8,2){$b_2$};
	\node (b3) at (9,2){$b_3$};
	\node (m1) at (6,3){$\theta_{-1}^{1}$};
	\node (inf1) at (5,2){$\theta_{\infty}^{1}$};
	\node (eplus) at (4,3){$\theta_{+}^{1}$};
	\node (eminus) at (3,2){$\theta_{-}^{1}$};
	\node (T) at (4,2){$T$};
	\node (P) at (6,2){$P$};
	\node (O) at (5,4){$O$};
	\node (F) at (5,3){$F$};
	
	\draw (P)--(F);
	\draw (a9)--(a8)--(a7)--(a6)--(a6)--(a5)--(a4)--(a3)--(a2)--(a1);
	\draw (a5)--(T);
	\draw (a3)--(P);
	\draw (eminus)--(T)--(inf1)--(P)--(b1)--(b2)--(b3);
	\draw (eplus)--(T) -- (F) --(O);
	\draw (P) -- (m1);
	\end{tikzpicture}
    \caption{The dual graph of the $-2$-curves which generate the N\'{e}ron--Severi group.}
    \label{fig:NS_group_dual_graph}
\end{figure}

\subsection{A different proof}
In this subsection we prove Proposition~\ref{p:NS}.
For the convenience of the reader, we split the proof in four main steps, 
each corresponding to a subsection.
The computation of the rank still relies on Proposition~\ref{p:PicNumBound}, but not on the divisors exhibited in Section~\ref{sec:ComputingPicXViaDivisorsOnX};
the computation of the discriminant only relies on the elliptic fibration presented in Subsection~\ref{ssec:Second}.

The Shioda--Tate formula~\cite{Shi89} tell us that the rank of the group~$N$ is bounded from below by~$19$ and by Lefschetz theorem on (1,1)-classes \cite[Theorem 3.3.2]{Huy_Complex_geometry} it is bounded by~$20$ from above. 
To conlude that the rank of $N$ equals $19$ we reprove Proposition~\ref{p:PicNumBound} using an elliptic fibration on $S$. Let $p$ be a prime of good reduction for $S$. The number of points in $S(\mathbb{F}_{p^n})$ 
equals $G+B$ where $G$ is the total number of points on the elliptic curves over $\mathbb{F}_{p^n}$ in the fibres of good reduction and $B$ is the total number of points in the components defined over $\mathbb{F}_{p^n}$ of the fibres of bad reduction. This last step is done through a simple application of the Tate algorithm \cite{Tat75}. In our case it is enough to compute the numbers $|S(\mathbb{F}_{p^n})|$ for $n=1,2,3$ or $4$ to reconstruct the characteristic polynomial of the Frobenius morphism acting on the etale cohomology group $H^2_{et}(S_{\overline{\mathbb{F}_{p}}},\mathbb{Q}_{\ell})$ for $\ell\neq p$, cf.~\cite{vL07,Naskrecki_Acta,Naskrecki_BCP}.

\subsection{Height pairing computations}
Shioda \cite[Theorem 8.6]{Shi89} defined the quadratic positive semi-definite height pairing $\langle\cdot,\cdot\rangle$ on the group $E_2(\Qbar (t))$ which explicitly on the point $P$ is
\[\langle P, P\rangle = 4-\frac{a_0(10-a_0)}{10}-\sum_{i=1}^{4}\frac{a_i(2-a_i)}{2}-\frac{a_5(4-a_5)}{4},\]
where the correction values $a_i$, $a_{0}\in \{0,\ldots,9\}$, $a_1, a_2,a_3,a_4\in\{0,1\}$, $a_5\in\{0,1,2,3,4\}$ are determined from the intersection of $P$ with components of reducible fibres cf.\cite[p. 22]{Shi89}. It follows from the Tate algorithm \cite{Tat75}, \cite[IV,\S 9]{silverman_advanced} that for the point $P_3 = (4t^3,4t^3(t^2-1))$ the height $\langle P_3, P_3\rangle$ equals $3/20$. The minimal positive theoretically possible height of the point in $E_2(\Qbar(t))$ is equal to $1/20$ which follows from the height formula described above. The free part of $E_2(\Qbar(t))$ is of rank $1$. Hence if $P_3+\mathcal{T}$ were $m$-divisible for a suitable choice of a torsion point $\mathcal{T}$, then the height of the point $Q$ such that $mQ=P_3+\mathcal{T}$ would be equal to $\frac{3}{20m^2}<\frac{1}{20}$ for any $m\geq 2$, in contradiction to the minimality of height. Hence, the point $P_3$ spans the free part of $E_2(\Qbar(t))$.

\subsection{Discriminant formula}
As $E_2$ is the generic fibre of the elliptic fibration $S\to \PP^1$, 
the discriminant of $N$ can be computed from the discriminant formula, cf.\cite[\S 11.10]{Schutt_Shioda}
\[\disc N =(-1)^{r}\disc \Triv\cdot \disc \MW (S)/|\MW (S)_{\tors}|^2 \]
where $r$ is the rank of the group $E_{2}(\Qbar(t))$,
$\disc \Triv$ is the discriminant of the trivial sublattice with respect to the natural intersection pairing on $N$, 
$\disc \MW (S)$ is the discriminant of the lattice $E_{2}(\Qbar(t))/E_{2}(\Qbar(t))_{\tors}$ with respect to the height pairing $\langle\cdot,\cdot\rangle$ 
and $\MW(S)_{\tors}$ is $E_{2}(\Qbar(t))_{\tors}$. 
In our case we obtain $\disc N = 96/T^2$ where the integer $T\geq 1$ is the order of the torsion subgroup in $E_2(\Qbar(t))$. 
Since $\disc N$ is an integer, it follows that $T|2^2$. 
We have a unique point of order $2$ in $E_2(\Qbar(t))$ since the cubic polynomial which defines $E_2$ has only one root in $\Qbar(t)$. 
If there is a point $P_4$ of order $4$ on this curve, then $2P_4=(0,0)$. 
For a general point $(x,y)$ on $E_2$ the $x$-coordinate of the point $2(x,y)$ is
$$\frac{\left(16 t^7+16 t^6-16 t^5-x^2\right)^2}{4 x \left(16 t^7+16 t^6-16 t^5-3 t^4 x-8 t^3 x+2 t^2 x+x^2+x\right)}.$$
Hence if $(0,0)$ were $2$-divisible, 
the polynomial $x^2-16 t^5 \left(t^2+t-1\right)$ would have a root over $\overline{\QQ}(t)$, which is impossible.

Hence we conclude that the N\'{e}ron--Severi group $N$ is spanned by the components of the trivial sublattice (root sublattice generated by components of the fibres and the image of the zero section) and by the curve in $N$ representing $P_3$ and the torsion section $(0,0)$. 
Its discriminant is equal to $24$.
\begin{corollary}
It follows that
$$E_2(\Qbar(t)) = E_2(\QQ(\sqrt{5})(t))\cong \mathbb{Z}\oplus\mathbb{Z}/2\mathbb{Z}$$
and the group is generated by two points $P_3=(4t^3,4t^3(t^2-1))$ and $T=(0,0)$.
\end{corollary}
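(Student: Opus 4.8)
The plan is to assemble the three ingredients established in the preceding subsections — the Mordell--Weil rank, a generator of the free part, and the torsion subgroup — into the stated description of $E_2(\Qbar(t))$. First I would pin down the rank. Applying the Shioda--Tate formula to the fibration $\pi$ gives $\rho(\Sbar)=2+\sum_v (m_v-1)+\rank E_2(\Qbar(t))$, where the $m_v$ are the numbers of components of the singular fibres listed in Proposition~\ref{p:EllFibTwo}(b). Summing the contributions of the $I_{10}$ fibre ($9$), the $I_4$ fibre ($3$) and the four $I_2$ fibres ($4$) and adding the $2$ coming from the class of a fibre and the zero section yields $\rank\Triv=18$; combined with $\rho(\Sbar)=19$ (Proposition~\ref{p:PicNumBound}, re-proven via this same fibration) this forces $\rank E_2(\Qbar(t))=1$.

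Next I would record that $P_3$ generates the free part. The height computation gives $\langle P_3,P_3\rangle=3/20$, while the correction-term formula for Shioda's pairing on this fibration shows that the smallest positive height attainable is $1/20$. Hence if $P_3+\mathcal T$ were $m$-divisible for some torsion point $\mathcal T$ and some $m\ge 2$, the point $Q$ with $mQ=P_3+\mathcal T$ would have height $3/(20m^2)\le 3/80<1/20$, a contradiction; so $P_3$ spans the free part. For the torsion, the discriminant formula gives $\disc N = 96/|E_2(\Qbar(t))_{\tors}|^2$, and since $\disc N = 24\in\ZZ$ this forces $|E_2(\Qbar(t))_{\tors}|$ to divide $4$. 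The Weierstrass cubic of $E_2$ has exactly one root in $\Qbar(t)$, namely $x=0$, so there is a unique $2$-torsion point $T=(0,0)$. To exclude $4$-torsion one checks that $(0,0)$ is not $2$-divisible: a point $Q$ with $2Q=(0,0)$ would, by the duplication formula, force $x(Q)^2=16t^5(t^2+t-1)$, which has no solution in $\Qbar(t)$ since the right-hand side is not a square there (it has a factor $t$ to an odd power). Therefore $E_2(\Qbar(t))_{\tors}\cong\ZZ/2\ZZ$, and $E_2(\Qbar(t))\cong\ZZ\oplus\ZZ/2\ZZ$, generated by $P_3$ and $T$.

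Finally, since $P_3$ and $(0,0)$ are already $\QQ(t)$-rational and, by the above, together generate all of $E_2(\Qbar(t))$, the chain $E_2(\QQ(t))\subseteq E_2(\QQ(\sqrt5)(t))\subseteq E_2(\Qbar(t))$ collapses, giving the asserted equalities. I expect no genuine obstacle here: the only step involving any computation is the non-$2$-divisibility of $(0,0)$, and that reduces to the one-line observation that $16t^5(t^2+t-1)$ is not a square in $\Qbar(t)$; everything else is bookkeeping with the fibre data of Proposition~\ref{p:EllFibTwo}, the height value $\langle P_3,P_3\rangle=3/20$, and the discriminant identity $\disc N=24$ already in hand.
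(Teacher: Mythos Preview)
Your argument is correct and follows essentially the same route as the paper: the corollary is stated as an immediate consequence of the preceding subsections, and you have faithfully recapitulated those steps (Shioda--Tate for the rank, the height bound $3/20$ versus the minimum $1/20$ to show $P_3$ generates the free part, and the discriminant formula together with the duplication map to pin down the torsion as $\ZZ/2\ZZ$). One small redundancy: once you invoke $\disc N=24$ together with $\disc N=96/T^2$ you already get $T=2$ on the nose, so the separate exclusion of $4$-torsion is not needed at that point---the paper instead uses only that $\disc N\in\ZZ$ to get $T\mid 4$ and then rules out $T=4$ via the non-square $16t^5(t^2+t-1)$, which is the argument you also give; either way works.
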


\subsection{N\'{e}ron--Severi group basis}
The group $N$ is spanned by the components of the reducible fibres, 
the general fibre $F$, 
the image of the zero section $O$ and the images of the non-zero sections which generate the Mordell--Weil group of the generic fibre. 
In our case, we have two points $P$ and $T$, where $P$ is of infinite order, and $T$ is a generator of the torsion subgroup ($2$-torsion point). 
We consider a generating set $\mathcal{B}$ for $N$, which contains only the following curves:
\begin{itemize}
    \item the components $\theta_{v}^{i}$ for $i>0$ of the reducible fibres (we skip the component which meets the zero section),
    \item the zero section $O$,
    \item the general fibre $F$,
    \item the sections $P$ and $T$.
\end{itemize}
The intersection pairing matrix for the tuple of curves above has dimension $20$ and rank~$19$. The curves satisfy the following linear relation:
\[a_1 + 2 a_2 + 3 a_3 + 4 a_4 + 5 a_5 + 4 a_6 + 3 a_7 + 2 a_8 + a_9  + \theta_{t=\infty}^{1} + \theta_{+}^{1} + \theta_{-}^{1}= 4 F +  2 O - 2 T,\]
where $a_i=\theta_{t=0}^{i}$ for $i\in\{1,\ldots,9\}$ and $\theta_{\pm}^{1}$ denotes the unique component which does not intersect zero in the fibre above $t=\frac{1}{2}(-1\pm\sqrt{5})$.
The set of components $\mathcal{B}_{0}=\mathcal{B}\setminus\{\theta_{t=\infty}^{1}\}$ is a basis of the N\'{e}ron--Severi group. 
Indeed, we check by a direct computation based on the intersection graph that the determinant of the sublattice spanned by $\mathcal{B}_{0}$ is~$24$.

We can also replace the generators $P$ and $T$ by $P-O-2F$ and $T-O-2F$, respectively, to obtain the following decomposition
\[N = L\oplus U,\]
where $L$ is positive definite of rank $17$ and discriminant $-24$ and $U$ is spanned by $F$ and $O$ and indefinite of rank $2$ and discriminant $-1$.

\begin{remark}
We checked with {\tt Magma} that the lattice $L$ is not a direct sum of proper sublattices. In the language of \cite{Shi89,Schutt_Shioda,Schutt_Shioda_book}, the lattice $L$ is the essential sublattice of $N$ with respect to the given elliptic fibration.
\end{remark}

\acknowledgments
The first author is very grateful to Claude Duhr, Lorenzo Tancredi, Robert M. Schabinger, Andreas von Manteuffel, Duco van Straten, and Stefan Weinzierl for lots of helpful discussions.
The second author was supported by the grant SFB/TRR 45 in Mainz.
The fourth author would like to thank John Voight for a helpful dicussion about Hilbert modular forms and the department of mathematics of Mainz University for the hospitality during his visit in February 2019.
We would like to thank the anonymous referees for their helpful comments. 

\bibliography{bib}

\end{document}